\newcommand{\mcB}{\mathcal{B}}
\newcommand{\mcF}{\mathcal{F}}
\newcommand{\mcG}{\mathcal{G}}
\newcommand{\mcZ}{\mathcal{Z}}
\newcommand{\from}{\colon}
\newcommand{\meet}{\wedge}
\newcommand{\join}{\vee}
\newcommand{\cl}{\operatorname{cl}}
\newcommand{\bed}{\operatorname{\Omega}}
\newcommand{\delete}{\backslash}
\newcommand{\contract}{\slash}
\newcommand{\restrictto}{|}
\newcommand{\sg}[1]{#1}
\newtheorem{theorem}{Theorem}[section]
\newtheorem{conjecture}[theorem]{Conjecture}
\newtheorem{proposition}[theorem]{Proposition}
\newtheorem{lemma}[theorem]{Lemma}
\newtheorem{definition}[theorem]{Definition}
\newtheorem{corollary}[theorem]{Corollary}
\tikzset{top vert/.style={circle, fill=black, inner sep=2pt, outer sep=0pt}}
\tikzset{bot vert/.style={circle, fill=black, inner sep=2pt}}
\tikzset{middlearrow/.style={
        decoration={
            markings, mark= at position 0.88 with {\arrow[scale=1.5]{stealth}},
        },
        thick,
        postaction={decorate},
    }
}
\tikzset{middlearrow_g/.style={
        color={black!40},
        decoration={
            markings, mark= at position 0.88 with {\arrow[scale=1.5]{stealth}},
        },line width = 0.95pt,thick,
draw=black,
        postaction={decorate},
    }
}
\tikzset{middlearrow_d/.style={
        decoration={
            markings, mark= at position 0.88 with {\arrow[scale=1.5]{stealth}},
        },
        postaction={decorate},
    }
}
\tikzset{middlearrow_f/.style={
        decoration={
            markings, mark= at position 0.88 with {\arrow[scale=1.5]{stealth}},
        },line width = 0.95pt,thick,
        postaction={decorate},
    }
}
\tikzset{middlearrow_var/.style={
        decoration={
            markings, mark= at position 1 with {\arrow[scale=1.5]{stealth}},
        },
        thick,
        postaction={decorate},
    }
}
\begin{document}

\title{An infinite family of excluded minors for strong
  base-orderability} 
\author{Joseph E.\ Bonin} \author{Thomas J.\ Savitsky}
\address{Department of Mathematics\\
  The George Washington University\\
  Washington, DC 20052} \email{jbonin@gwu.edu}
\email{savitsky@gwmail.gwu.edu}
\begin{abstract}
  We discuss a conjecture of Ingleton on excluded minors for
  base-orderability, and, extending a result he stated, we prove that
  infinitely many of the matroids that he identified are excluded
  minors for base-orderability, as well as for the class of gammoids.
  We prove that a paving matroid is base-orderable if and only if it
  has no $M(K_4)$-minor.  For each $k\ge 2$, we define the property of
  $k$-base-orderability, which lies strictly between base-orderability
  and strong base-orderability, and we show that $k$-base-orderable
  matroids form what Ingleton called a complete class.  By
  generalizing an example of Ingleton, we construct a set of matroids,
  each of which is an excluded minor for $k$-base-orderability, but is
  $(k-1)$-base-orderable; the union of these sets, over all $k$, is an
  infinite set of base-orderable excluded minors for strong
  base-orderability.
\end{abstract}

\maketitle

\section{Introduction}\label{sec:intro}

Basis-exchange properties are of long-standing interest in matroid
theory (see Kung's survey \cite{white_book1_ch4}).  Condition (BE) in
the following definition of a matroid is a simple basis-exchange
property: a matroid $M$ is an ordered pair $(\mcB, E(M))$ where $E(M)$
is a finite set and $\mcB$ is a non-empty collection of subsets of
$E(M)$ (the bases) such that
\begin{enumerate}
\item[(BE)] 
   \emph{if $B_1, B_2 \in \mcB$ and $x\in B_1-B_2$, then there
    is some $y\in B_2-B_1$ so that \\ $(B_1 - \sg{x}) \cup {y} \in \mcB$.}
\end{enumerate}
Brualdi \cite{brualdi_dep} showed that property (BE) is equivalent to
the following, seemingly stronger, \emph{symmetric basis-exchange
  property}:
\begin{quote}
  \emph{if $B_1,B_2\in \mcB$ and $x \in B_1-B_2$, then there is some
    $y \in B_2-B_1$ so that both $(B_1 - \sg{x}) \cup y$ and $(B_2 -
    \sg{y}) \cup \sg{x}$ are in $\mcB$.}
\end{quote}
Brylawski \cite{MR0337634}, Greene \cite{MR0311494}, and Woodall
\cite{woodall_ex} showed that the \emph{multiple symmetric exchange
  property} holds for all matroids:
\begin{quote}
  \emph{if $B_1,B_2\in \mcB$ and $X \subseteq B_1 - B_2$, then there
    is some $Y \subseteq B_2 - B_1$ so that both $(B_1 - X) \cup Y$
    and $(B_2 - Y) \cup X$ are in $\mcB$.}
\end{quote}
In \cite{brualdi_dep}, Brualdi also showed that the \emph{bijective
  exchange property} holds for all matroids:
\begin{quote}
  \emph{if $B_1,B_2\in \mcB$, then there is a bijection $\sigma \from
    B_1 \to B_2$ so that, for all $x \in B_1$, the set $(B_1 - \sg{x})
    \cup \sg{\sigma(x)}$ is in $\mcB$.}
\end{quote}

In contrast, our work here is motivated by the following
basis-exchange properties that are not possessed by all matroids.

\begin{definition}
  A matroid $M$ is \emph{base-orderable} if, given any two bases $B_1$
  and $B_2$, there is a bijection $\sigma \from B_1 \to B_2$ such that
  for every $x \in B_1$, both $(B_1 - \sg{x}) \cup \sg{\sigma(x)}$ and
  $(B_2 - \sg{\sigma(x)}) \cup \sg{x}$ are bases.

  A matroid $M$ is \emph{strongly base-orderable} if, given any two
  bases $B_1$ and $B_2$, there is a bijection $\sigma \from B_1 \to
  B_2$ such that for every $X \subseteq B_1$,
    \begin{enumerate}
    \item[\emph{(*)}] $(B_1 - X) \cup \sigma(X)$ is a basis, and
    \item[\emph{(**)}] $(B_2 - \sigma(X)) \cup X$ is a basis.
    \end{enumerate}
\end{definition}

To the best of our knowledge, the notion of base-orderability first
appeared in both \cite{brualdi_scrimger} and \cite{brualdi_dep} at
about the same time.  Brualdi and Scrimger \cite{brualdi_scrimger}
showed that all transversal matroids are strongly base-orderable (and
hence base-orderable).  The property of base-orderability appeared
(without the term) in Brualdi \cite{brualdi_dep} as a natural
strengthening of the basis-exchange properties discussed there.

Not all matroids are base-orderable; in particular, the cycle matroid
$M(K_4)$ is not.  We denote the class of base-orderable matroids by
$\mathcal{BO}$ and that of strongly base-orderable matroids by
$\mathcal{SBO}$.  (In this paper, by a \emph{class} of matroids we
mean a set of matroids that is closed under isomorphism.)  Clearly,
$\mathcal{SBO} \subseteq \mathcal{BO}$.  Ingleton
\cite{ingleton_nonbo} gave an example that shows that this containment
is proper.  In Section~\ref{sec:em_sbo} we generalize his example; we
construct an infinite collection of excluded minors for strong
base-orderability, each of which is base-orderable.

It is easy to show that the class of base-orderable matroids is
minor-closed, but describing its excluded minors remains an open
problem.  In Section~\ref{sec:ingletons_conjecture}, we discuss a
conjecture of Ingleton on the excluded minors.  Much of our work arose
by exploring ideas in Ingleton's paper \cite{ingleton_nonbo}, to which
we owe a great debt.  A number of our results and constructions grew
from seeds in that paper, which, while providing a wealth of
intriguing ideas, contains few proofs.  To give a more complete
account of this topic, we also offer proofs of some of the assertions
that Ingleton made, either without proof or with a minimal sketch of
the proof.  In Section~\ref{sec:bedigraph}, we lay the groundwork for
Section~\ref{sec:ingletons_conjecture} and also prove that a paving
matroid is base-orderable if and only if it has no $M(K_4)$-minor.  In
Section~\ref{sec:cyclic_flats}, we review cyclic flats, which we use
extensively thereafter.  In Section~\ref{sec:em_gammoids}, we prove a
special case of Ingleton's conjecture.  There we describe an infinite
family of excluded minors for the class $\mathcal{BO}$, each of which
has precisely six cyclic flats; these matroids are also excluded
minors for the class of gammoids.  (Recall that a \emph{gammoid} is a
minor of a transversal matroid.)

Ingleton defined complete classes of matroids in
\cite{ingleton_related}.

\begin{definition}\label{def:complete}
  A class of matroids is \emph{complete} if it is closed under the
  operations of minors, duals, direct sums, truncations, and induction
  by directed graphs.
\end{definition}

It is known that $\mathcal{BO}$ and $\mathcal{SBO}$ are complete
classes.  Ingleton \cite{ingleton_related} noted that the class of
gammoids is complete and that each non-empty complete class contains
all gammoids.  In particular, $\mathcal{SBO}$ contains all gammoids.
This containment is proper since, for instance, the V\'amos matroid is
strongly base-orderable, but it is not a gammoid since gammoids are
representable over the real numbers.  These three complete classes
form part of a hierarchy that Ingleton, again in
\cite{ingleton_related}, introduced.  We quote:
\begin{quote}
  ``There is scope for introducing an infinity of complete classes
  between $\mathcal{BO}$ and $\mathcal{SBO}$ by appropriate
  limitations on the cardinals of subsets $X$ for which (*) is to hold,
  but these do not seem to have been studied.''
\end{quote}
In Section \ref{sec:prelim}, we begin to study some of these classes;
we introduce the class $k$-$\mathcal{BO}$ of $k$-base-orderable
matroids, where $k$ is a fixed positive integer, and address the first
three operations in Definition \ref{def:complete}.  The last two
operations are treated in Section \ref{sec:kbo}, using a reformulation
of completeness that we prove in Section~\ref{sec:complete}.

We assume familiarity with basic matroid theory.  For notation, we
follow Oxley \cite{oxleybook}.  We use $2^{S}$ to denote the set of
subsets of a set $S$.  For a family $\mcF$ of sets, we shorten
$\cap_{X\in \mcF} X$ to $\cap \mcF$ and $\cup_{X\in \mcF} X$ to $\cup
\mcF$.  For a graph $G$ with vertex set $V$, the \emph{neighborhood}
of $X \subseteq V$, denoted $N_G(X)$, is
$$N_G(X) = \{v \in V : xv \text{ is an edge of } G \text{ for some }
x \in X\}.$$ If $G$ is clear from context, we may omit the subscript.
A \emph{digraph} is a directed graph.

\section{$k$-base-orderable matroids}\label{sec:prelim}

In this section, we begin to explore some variations on the concept of
base-orderability that fit the mold described by Ingleton in the quote
above.  While our main results about the matroids we introduce below,
$k$-base-orderable matroids, are in Sections \ref{sec:kbo} and
\ref{sec:em_sbo}, in this section we treat some properties that we use
freely throughout the rest of the paper.

\begin{definition}\label{def:kbo}
  Fix a positive integer $k$.  A \emph{$k$-exchange-ordering} for a
  pair of bases $B_1$ and $B_2$ of a matroid $M$ is a bijection
  $\sigma \from B_1 \to B_2$ such that, for every subset $X$ of $B_1$
  with $|X| \le k$, both $(B_1 - X) \cup \sigma(X)$ and $(B_2-
  \sigma(X))\cup X$ are bases of $M$.

  A matroid is \emph{$k$-base-orderable} if each pair of its bases has
  a $k$-exchange ordering.
\end{definition}

For a fixed $k \ge 1$, we denote the class of $k$-base-orderable
matroids by $k$-$\mathcal{BO}$.  Thus,
$1$-$\mathcal{BO}=\mathcal{BO}$.  We usually call a
$1$-exchange-ordering an \emph{exchange-ordering}.  A matroid $M$ is
strongly base-orderable if and only if it is $r(M)$-base-orderable.

\begin{proposition}\label{prop:kbobasics}
  Let $k$ and $l$ be positive integers.
  \begin{enumerate}
  \item If $M$ is $\lceil r(M)/2 \rceil$-base-orderable, then it is
    strongly-base-orderable.
  \item A bijection $\sigma \from B_1 \to B_2$ is a
    $k$-exchange-ordering if and only if $\sigma^{-1} \from B_2 \to
    B_1$ is also a $k$-exchange-ordering.
  \item If $\sigma \from B_1 \to B_2$ is a $k$-exchange-ordering, then
    \begin{enumerate}
    \item $\sigma(x) = x$ for every $x \in B_1 \cap B_2$, and\
    \item $\sigma$ is also an $l$-exchange-ordering for every integer
      $l$ with $1 \le l \le k$.
    \end{enumerate}
  \item If $M$ and $N$ are $k$-base-orderable, then so is $M\oplus N$.
  \item If $M$ is $k$-base-orderable, then so is $M^{*}$, as well as
    $M \delete x$ and $M \contract x$ for each $x \in E(M)$.
  \end{enumerate}
\end{proposition}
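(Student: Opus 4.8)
Each of the five parts follows directly from unwinding the definition of a $k$-exchange-ordering, so the proof is a sequence of short, mostly routine verifications; the main work is bookkeeping about which subsets of bases are involved.

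\begin{proof}[Proof proposal]
For part (1), suppose $M$ is $\lceil r(M)/2 \rceil$-base-orderable and let $\sigma \from B_1 \to B_2$ be a $\lceil r(M)/2\rceil$-exchange-ordering. Given any $X \subseteq B_1$, I want to show $(B_1 - X)\cup \sigma(X)$ and $(B_2 - \sigma(X))\cup X$ are bases. If $|X| \le \lceil r(M)/2\rceil$ this is immediate. Otherwise let $X' = B_1 - X$, so $|X'| < \lceil r(M)/2 \rceil \le \lceil r(M)/2 \rceil$; applying the defining property to $X'$ gives that $(B_1 - X')\cup \sigma(X') = X \cup \sigma(B_1 - X)$ and $(B_2 - \sigma(X'))\cup X'$ are bases. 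Since $\sigma$ is a bijection, $\sigma(B_1 - X) = B_2 - \sigma(X)$, so the first of these is $(B_2 - \sigma(X))\cup X$ and the second is $(B_1 - X)\cup \sigma(X)$. Thus $\sigma$ witnesses strong base-orderability.

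For part (2), observe that the conditions defining a $k$-exchange-ordering are symmetric in $(B_1,\sigma)$ and $(B_2,\sigma^{-1})$: a subset $X \subseteq B_1$ with $|X|\le k$ corresponds to the subset $Y = \sigma(X) \subseteq B_2$ with $|Y| \le k$, and the two required bases $(B_1 - X)\cup \sigma(X)$ and $(B_2 - \sigma(X))\cup X$ are exactly $(B_2 - Y)\cup \sigma^{-1}(Y)$ and $(B_1 - \sigma^{-1}(Y))\cup Y$ written for $\sigma^{-1}$; as $X$ ranges over all subsets of $B_1$ of size at most $k$, $Y$ ranges over all subsets of $B_2$ of size at most $k$, giving the equivalence.

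For part (3)(a), apply the $1$-exchange-ordering property to $X = \{x\}$ for $x \in B_1\cap B_2$: if $\sigma(x)\ne x$ then $(B_1 - \{x\})\cup\{\sigma(x)\}$ has size $r(M)$ but is contained in $B_1 \cup \{\sigma(x)\}$, a set of size $r(M)+1$ whose unique circuit-or-... more simply, $(B_1 - \{x\}) \cup \{\sigma(x)\} \subseteq B_1$ since $\sigma(x) \in B_2$; wait — rather: since $\sigma(x) \in B_1 \cap B_2$ would force $\sigma(x) = x$ by injectivity applied together with $\sigma$ fixing... I will instead argue that if $\sigma(x)\neq x$ then $\sigma(x)\notin B_1$ (else $x,\sigma(x)$ distinct elements of $B_1$ with the same image issue does not arise, so this needs the real argument): the set $(B_1-\{x\})\cup\{\sigma(x)\}$ must be a basis, but if $x \in B_2$ and $\sigma(x) \ne x$ then $(B_2 - \{\sigma(x)\}) \cup \{x\}$ contains $x$ twice only if $x = \sigma(x)$; the clean route is: since $x \in B_1\cap B_2$, independence of $(B_2 - \{\sigma(x)\})\cup\{x\}$ forces $x \notin B_2 - \{\sigma(x)\}$, i.e. $x = \sigma(x)$. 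For part (3)(b), any subset $X$ with $|X|\le l \le k$ also satisfies $|X| \le k$, so the defining conditions for a $k$-exchange-ordering immediately give those for an $l$-exchange-ordering.

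For part (4), let $B_1, B_2$ be bases of $M\oplus N$; then $B_i = B_i^M \cup B_i^N$ with $B_i^M$ a basis of $M$ and $B_i^N$ a basis of $N$. Take $k$-exchange-orderings $\sigma^M \from B_1^M \to B_2^M$ and $\sigma^N \from B_1^N \to B_2^N$ and let $\sigma$ be their union. For $X \subseteq B_1$ with $|X|\le k$, write $X = X^M \cup X^N$ with $X^M \subseteq B_1^M$, $X^N \subseteq B_1^N$; then $|X^M|, |X^N| \le k$, and $(B_1 - X)\cup\sigma(X) = \bigl((B_1^M - X^M)\cup\sigma^M(X^M)\bigr) \cup \bigl((B_1^N - X^N)\cup\sigma^N(X^N)\bigr)$ is a disjoint union of a basis of $M$ and a basis of $N$, hence a basis of $M\oplus N$; similarly for $(B_2 - \sigma(X))\cup X$. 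For part (5), duality follows from part (2) together with the standard fact that complementation $B \mapsto E(M) - B$ is a bijection between bases of $M$ and bases of $M^*$: given bases $B_1^*, B_2^*$ of $M^*$, set $B_i = E(M)-B_i^*$, take a $k$-exchange-ordering $\sigma\from B_1\to B_2$, and check that $\tau \from B_1^* \to B_2^*$ defined by $\tau(x) = \sigma^{-1}(x)$ on $B_1^* - B_2^* \subseteq B_2$ and $\tau(x)=x$ on $B_1^*\cap B_2^*$ works — here I expect the bookkeeping between the "delete-side" and "contract-side" exchanges to be the one step requiring care, using part (3)(a) to handle the common elements. Finally, deletion and contraction: since $M\delete x$ and $M\contract x$ are each minors, and since (by part (5) duality, which we have just shown, together with $M\contract x = (M^*\delete x)^*$) it suffices to handle deletion, I take bases $B_1, B_2$ of $M\delete x$, note they are bases of $M$ not containing $x$, take a $k$-exchange-ordering $\sigma$ for them in $M$, observe $\sigma$ fixes no... rather, observe that $x$ is in neither $B_i$ nor any of the sets $(B_i - Y)\cup\sigma(Y)$, so all the basis-conditions in $M$ are equivalent to the corresponding basis-conditions in $M\delete x$, and $\sigma$ is a $k$-exchange-ordering there. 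The main obstacle throughout is simply keeping the common-elements bookkeeping (part (3)(a)) straight in the duality argument; everything else is a direct translation of definitions.
\end{proof}
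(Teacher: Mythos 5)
Your overall route is the same as the paper's: parts (1)--(4) are direct verifications (the paper simply calls them immediate), and for part (5) you construct the dual exchange-ordering by fixing $B_1^{*}\cap B_2^{*}$ and transporting the given ordering across complementation, which is exactly the paper's map $\sigma^{*}$. Your complementation trick in part (1) --- replacing $X$ by $B_1-X$ and noting that conditions (*) and (**) swap --- is correct; the displayed inequality ``$|X'| < \lceil r(M)/2\rceil \le \lceil r(M)/2\rceil$'' is garbled, but the bound you actually need, $|B_1-X|=r(M)-|X|<\lceil r(M)/2\rceil$ when $|X|>\lceil r(M)/2\rceil$, does hold. The verifications of (2), (3), and (4), despite the false starts in (3)(a), all land on correct arguments.

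The one genuine gap is in the deletion step of part (5): you assert that bases of $M\delete x$ ``are bases of $M$ not containing $x$.'' That is true only when $x$ is not a coloop of $M$. If $x$ is a coloop, then $r(M\delete x)=r(M)-1$, every basis of $M$ contains $x$, and the bases of $M\delete x$ are the sets $B-\{x\}$ for bases $B$ of $M$, so your argument does not apply as written. The paper treats this case separately: take a $k$-exchange-ordering $\sigma\from B_1\cup \{x\}\to B_2\cup \{x\}$ in $M$, note that $\sigma(x)=x$ by part (3)(a), and restrict $\sigma$ to $B_1$; since $x$ then lies in every exchanged set, deleting it preserves the basis property in $M\delete x$. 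This is a two-line fix, but as written your deletion argument is incomplete.
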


\begin{proof}
  Parts (1)--(4) are immediate.  For part (5), let $B_1^{*}$ and
  $B_2^{*}$ be bases of $M^{*}$.  There is a $k$-exchange-ordering
  $\sigma \from E(M) - B_2^{*} \to E(M) - B_1^{*}$ for $M$. Define
  $\sigma^{*} \from B_1^{*} \to B_2^{*}$ by
  \begin{equation*} \sigma^{*}(x) =
    \begin{cases} x, & \text{if $x\in B_1^{*} \cap B_2^{*}$,}\\
      \sigma(x), & \text{if $x \in B_1^{*} - B_2^{*}$.}
    \end{cases}
  \end{equation*} 
  It is easy to check that $\sigma^{*}$ is a $k$-exchange-ordering for
  $M^{*}$.

  It now suffices to treat $M\delete x$ since $M \contract x = (M^{*}
  \delete x)^{*}$.  Let $B_1$ and $B_2$ be bases of $M \delete x$.  If
  $x$ is not a coloop, then a $k$-exchange ordering for the bases
  $B_1$ and $B_2$ of $M$ serves as such for $M \delete x$.  Otherwise,
  $M$ has a $k$-exchange-ordering $\sigma \from B_1 \cup x \to B_2
  \cup x$, and since $\sigma(x) = x$, its restriction $\sigma|_{B_1}
  \from B_1 \to B_2$ is a $k$-exchange-ordering for $M \delete x$.
\end{proof}

The next two lemmas will be useful when discussing excluded minors.

\begin{lemma}\label{lem:kbo_disjoint_union}
  If $M$ is not $k$-base-orderable, then it has a minor $N$ whose
  ground set is the union of two disjoint bases of $N$ that have no
  $k$-exchange ordering.
\end{lemma}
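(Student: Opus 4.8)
The plan is to start with any minor that witnesses the failure of $k$-base-orderability in a minimal way, and then enlarge the ground set slightly so that the two offending bases become disjoint, using the standard trick of adjoining parallel copies. First I would choose a minor $N_0 = M / C \delete D$ that is not $k$-base-orderable but is minor-minimal with this property; by Proposition~\ref{prop:kbobasics}(5), every proper minor of $N_0$ is $k$-base-orderable. Because $N_0$ is not $k$-base-orderable, there exist bases $B_1$ and $B_2$ of $N_0$ admitting no $k$-exchange-ordering. If $B_1 \cap B_2 = \emptyset$ and $B_1 \cup B_2 = E(N_0)$, we are done with $N = N_0$; the work is in reducing to this case.

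For the ground set, the idea is: if $e \in E(N_0) - (B_1 \cup B_2)$, then $N_0 \delete e$ still has $B_1$ and $B_2$ as bases, and since a $k$-exchange-ordering for $B_1, B_2$ in $N_0 \delete e$ would also serve in $N_0$, the minor $N_0 \delete e$ is again not $k$-base-orderable on these two bases; so we may assume $B_1 \cup B_2 = E(N_0)$. Next, to make the bases disjoint, I would handle the common elements one at a time. Suppose $x \in B_1 \cap B_2$. The element $x$ is neither a loop nor a coloop of $N_0$ (a coloop lies in every basis, which would be fine, but then it cannot be removed; a loop lies in no basis). Form $N_1$ from $N_0$ by adding a new element $x'$ parallel to $x$. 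Then $B_1$ is still a basis of $N_1$, and $B_2' = (B_2 - x) \cup x'$ is a basis of $N_1$ with one fewer common element with $B_1$. The key point is that $B_1$ and $B_2'$ have no $k$-exchange-ordering in $N_1$: given one, say $\tau$, I would push it down to a bijection between $B_1$ and $B_2$ in $N_0$ by composing with the map $x' \mapsto x$ (an isomorphism of $N_1 / \text{nothing}$ onto... more precisely, $N_1 \delete x' \cong N_0$ and $N_1 \delete x \cong N_0$ via $x' \leftrightarrow x$), and check that the $k$-exchange conditions are preserved because $x$ and $x'$ are parallel, so swapping one for the other never changes whether a set is a basis. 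Hence $N_0$ would be $k$-base-orderable on $B_1, B_2$, a contradiction. Iterating over all of $B_1 \cap B_2$ produces a matroid $N'$ with two disjoint bases $B_1$ and $B_2'$ and no $k$-exchange-ordering between them; deleting everything outside $B_1 \cup B_2'$ (as in the first reduction) gives the desired minor $N$, which is a minor of $N'$, which in turn is...

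Here I should be careful: $N_1$ is not literally a minor of $N_0$, so I cannot simply assert that $N$ is a minor of $M$. The fix is to observe that the parallel extension can instead be realized inside $M$ itself, or to re-derive the minor. Concretely, I would instead build the whole witness as a minor of $M$ from the outset: since $M$ is not $k$-base-orderable, pick bases $A_1, A_2$ of $M$ with no $k$-exchange-ordering, set $C = A_1 \cap A_2$ and $D = E(M) - (A_1 \cup A_2)$, and consider $N = M / C \delete D$. Then $A_1 - C$ and $A_2 - C$ are disjoint bases of $N$ whose union is $E(N)$; it remains only to check that they have no $k$-exchange-ordering in $N$. This is the heart of the argument and the step I expect to be the main obstacle: I must show that a hypothetical $k$-exchange-ordering $\sigma \from A_1 - C \to A_2 - C$ for $N$ can be extended to $\hat\sigma \from A_1 \to A_2$ for $M$ by setting $\hat\sigma(c) = c$ for $c \in C$. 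For this I need the standard facts that, for $X \subseteq A_1$, writing $X = (X \cap C) \cup X'$ with $X' \subseteq A_1 - C$, the set $(A_1 - X) \cup \hat\sigma(X)$ is a basis of $M$ if and only if $((A_1 - C) - X') \cup \sigma(X')$ is a basis of $N = M/C \delete D$ --- which follows because contracting the independent set $C \subseteq A_1 - X$ and the element-bookkeeping for bases of contractions behaves exactly as needed --- and symmetrically for the $(A_2 - \sigma(X)) \cup X$ condition using $N = M / C \delete D$ with $A_2 \supseteq C$. Verifying these basis-translation identities carefully (loops created by the contraction land in $D$ and are deleted, independence in $M/C$ matches independence modulo $C$ in $M$) is routine but is where all the actual content sits; once it is in hand, the existence of $\sigma$ contradicts the choice of $A_1, A_2$, completing the proof.
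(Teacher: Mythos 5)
Your final construction, $N = M/(A_1\cap A_2)\setminus(E(M)-(A_1\cup A_2))$ together with the verification that a $k$-exchange-ordering of the two disjoint bases of $N$ would extend (by fixing the contracted elements) to one for $A_1,A_2$ in $M$, is exactly the paper's proof, which states the same minor and leaves the verification implicit. The earlier detour through parallel extensions was unnecessary, but you correctly recognized it does not produce a minor and discarded it, so the proposal as it stands is correct and matches the paper's approach.
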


\begin{proof}
  Let $A$ and $B$ be bases of $M$ that have no $k$-exchange-ordering.
  Take
  \begin{equation*}
    N = M \contract (A\cap B) \delete (E(M) - (A\cup B)). \qedhere
  \end{equation*}
\end{proof}

\begin{lemma}\label{lem:kbodorc}
  If a rank-$r$ matroid $M$ with $|E(M)|= 2r$ is not in
  $k$-$\mathcal{BO}$ but either
  \begin{enumerate}
  \item all single-element contractions $M \contract x$ are in
    $k$-$\mathcal{BO}$, or
  \item all single-element deletions $M \delete x$ are in
    $k$-$\mathcal{BO}$,
  \end{enumerate}
  then $M$ is an excluded minor for $k$-$\mathcal{BO}$.  The same is
  true if we replace $k$-$\mathcal{BO}$ by $\mathcal{SBO}$.
\end{lemma}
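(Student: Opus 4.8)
The plan is a short proof by contradiction. First I would reduce to hypothesis~(1). Since $k$-$\mathcal{BO}$ is minor-closed (Proposition~\ref{prop:kbobasics}(5)) and every proper minor of $M$ is a minor of some single-element deletion $M\delete x$ or some single-element contraction $M\contract x$, it suffices to show that all of $M\delete x$ and all of $M\contract x$ lie in $k$-$\mathcal{BO}$; one of these two families is handed to us. Reading both directions of Proposition~\ref{prop:kbobasics}(5), $k$-$\mathcal{BO}$ is self-dual, so a matroid is an excluded minor for it exactly when its dual is. Since $M^{*}$ again has rank $r$ and $2r$ elements and satisfies $M^{*}\contract x=(M\delete x)^{*}$, dualizing interchanges hypotheses~(1) and~(2); so I may assume hypothesis~(1) holds --- every $M\contract x$ is in $k$-$\mathcal{BO}$ --- and need only prove that every $M\delete x$ is in $k$-$\mathcal{BO}$ as well.

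Suppose instead that $M\delete x\notin k$-$\mathcal{BO}$ for some $x\in E(M)$. Then $x$ is not a coloop (otherwise $M\delete x=M\contract x\in k$-$\mathcal{BO}$), so $r(M\delete x)=r$. Applying Lemma~\ref{lem:kbo_disjoint_union} to $M\delete x$ yields bases $A$ and $B$ of $M\delete x$ with no $k$-exchange ordering such that, in the minor $N:=M\contract(A\cap B)\delete\bigl(E(M)-(A\cup B)\bigr)$ --- which is a minor of $M\delete x$ because $x\in E(M)-(A\cup B)$, as $A,B\subseteq E(M)-x$ --- the disjoint sets $A-B$ and $B-A$ are bases of $N$ with no $k$-exchange ordering; in particular $N\notin k$-$\mathcal{BO}$. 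The crux is a counting observation: $A$ and $B$ are $r$-element subsets of the $(2r-1)$-element set $E(M)-x$, so $|A\cap B|=|A|+|B|-|A\cup B|\ge 2r-(2r-1)=1$. Fixing $c\in A\cap B$, the matroid $N$ is a minor of $M\contract c$, which lies in $k$-$\mathcal{BO}$; hence $N\in k$-$\mathcal{BO}$, a contradiction. So every $M\delete x$ lies in $k$-$\mathcal{BO}$, and, together with the hypothesis, every proper minor of $M$ does; thus $M$ is an excluded minor for $k$-$\mathcal{BO}$.

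The argument for $\mathcal{SBO}$ is the same. Being a complete class, $\mathcal{SBO}$ is closed under minors and duals, so the reduction of the first paragraph applies verbatim, and Lemma~\ref{lem:kbo_disjoint_union} has the obvious analog for $\mathcal{SBO}$, proved identically: a bijection witnessing strong base-orderability of the two disjoint bases of $N$ extends, by the identity on $A\cap B$, to one witnessing strong base-orderability of $A$ and $B$ in $M\delete x$. I do not anticipate a genuine obstacle here; the one place to be careful is the bookkeeping that rewrites $N$ as a minor of $M$ in which $A\cap B$ has been contracted and a set containing $x$ deleted --- once that is in place, the inequality $|A\cap B|\ge 1$ is the whole point.
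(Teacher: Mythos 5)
Your proof is correct and follows essentially the same route as the paper's: reduce to hypothesis~(1) by duality, observe that any two bases of the rank-$r$, $(2r-1)$-element matroid $M\delete x$ must intersect, and use that intersection point to pull the required exchange-ordering out of a single-element contraction. The only difference is presentational --- you phrase it as a contradiction routed through the construction in Lemma~\ref{lem:kbo_disjoint_union} and minor-closedness, whereas the paper directly extends the ordering from $M\contract x$ by the identity; both hinge on the same counting observation.
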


\begin{proof}
  By duality, it suffices to treat the case in which condition (1)
  holds.  Since $M$ is not in $k$-$\mathcal{BO}$ but all of its
  single-element contractions are, $M$ has no coloops.  Fix $y \in
  E(M)$.  Let $B_1$ and $B_2$ be bases of $M \delete y$.  Since $M
  \delete y$ has $2r-1$ elements and rank $r$, the bases $B_1$ and
  $B_2$ cannot be disjoint.  Fix $x\in B_1 \cap B_2$.  In $M \contract
  x$, there is a $k$-exchange-ordering $\sigma \from B_1 - \sg{x} \to
  B_2 - \sg{x}$ by condition (1).  Extending $\sigma$ by setting
  $\sigma(x) = x$ gives a $k$-exchange-ordering for $B_1$ and $B_2$ in
  $M \delete y$.  Thus, $M$ is an excluded minor for
  $k$-$\mathcal{BO}$.
\end{proof}

\section{The basis-exchange digraph}\label{sec:bedigraph}

The following construction is often helpful when examining
basis-exchange properties.

\begin{definition}\label{def:bedigraph}
  Let $A$ and $B$ be bases of $M$.  The \emph{basis-exchange digraph
    of $A$ and $B$ with respect to $M$} is the directed bipartite
  graph $\bed^M_{A,B}$ on $2\,r(M)$ vertices with bipartition $\{A,
  B\}$ (using disjoint copies of $A$ and $B$ if $A \cap B \ne
  \varnothing$) where, for $a\in A$ and $b\in B$,
  \begin{enumerate}
  \item $(a, b)\in E(\bed^M_{A,B})$ if and only if $(B - \sg{b}) \cup
    \sg{a}$ is not a basis of $M$, and
  \item $(b, a)\in E(\bed^M_{A,B})$ if and only if $(A - \sg{a}) \cup
    \sg{b}$ is not a basis of $M$.
  \end{enumerate}
\end{definition}

We shorten $\bed^M_{A,B}$ to $\bed_{A,B}$ when $M$ is clear from the
context.  Figure \ref{fig:Delta3} illustrates the definition.

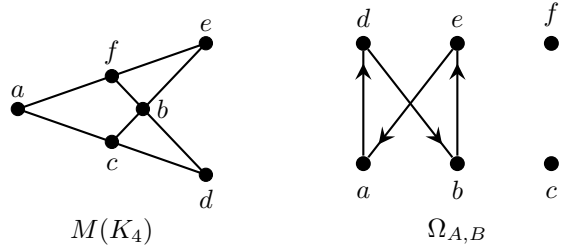
\begin{figure}[h]
  \centering
 \begin{tikzpicture}[scale=1.25]
  \draw[thick](0,0.5)--(2,-0.2);
  \draw[thick](0,0.5)--(2,1.2);
  \draw[thick](1,0.15)--(2,1.2);
  \draw[thick](1,0.85)--(2,-0.2);
  \filldraw (1,0.85) node[above] {$f$\rule[-3pt]{0pt}{1pt}} circle  (2pt);
  \filldraw (2,1.2) node[above] {$e$\rule[-1pt]{0pt}{1pt}} circle  (2pt);
  \filldraw (2,-0.2) node[below] {$d$\rule{0pt}{9pt}} circle  (2pt);
  \filldraw (1,0.15) node[below] {$c$\rule{0pt}{7pt}} circle  (2pt);
  \filldraw (1.33,0.5) node[right] {$\,b$} circle  (2pt);
  \filldraw (0,0.5) node[above] {$a$\rule[-1pt]{0pt}{1pt}} circle  (2pt);
  \node at (1,-0.8) {$M(K_4)$};
  \end{tikzpicture}
  \hspace{1.5cm}
 \begin{tikzpicture}[]
    \node[bot vert, label=below:{$a$\rule{0pt}{6.5pt}}](a) at (0, 0)
    {}; \node[bot vert, 
    label=below:$b$](d) at (1.25, 0) {}; \node[bot vert,
    label=below:{$c$\rule{0pt}{6.5pt}}](b) at (2.5, 0) {};

    \node[top vert, label=above:$d$](c) at (0, 1.6) {}; \node[top vert,
    label=above:$e$](f) at (1.25, 1.6) {}; \node[top vert,
    label=above:$f$](e) at (2.5, 1.6) {};

    \draw[middlearrow](a) -- (c); \draw[middlearrow](c) -- (d);
    \draw[middlearrow](d) -- (f); \draw[middlearrow](f) -- (a);

  \node at (1.25,-0.9) {$\bed_{A,B}$};
  \end{tikzpicture}
  \caption{The cycle matroid $M(K_4)$ and its basis-exchange digraph
    for the bases $A = \{a,b,c\}$ and $B=\{d,e,f\}$.}
  \label{fig:Delta3}
\end{figure}

While some authors use this term for a different graph, our definition
is consistent with the critical graphs that Ingleton defined in
\cite{ingleton_nonbo} (see Definition \ref{def:ingleton_critical}
below).  The following proposition is well-known and easy to prove.

\begin{proposition}\label{prop:kbo_fund_circ}
  Let $A$ and $B$ be bases of a matroid $M$.  For $a \in A - B$, the
  fundamental circuit of $a$ with respect to $B$, denoted by $C(a,B)$,
  is $$ \{a\} \cup \{b \in B : (a, b) \text{ is not an edge of }
  \bed_{A,B}\} .$$
\end{proposition}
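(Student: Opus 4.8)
The plan is to reduce the statement to the standard characterization of the elements of a fundamental circuit and then translate that characterization into the edge condition defining $\bed_{A,B}$. Since $B$ is a basis and $a \notin B$, the set $B \cup \{a\}$ is dependent and contains a unique circuit, namely $C(a,B)$; moreover $a \in C(a,B)$, for otherwise $C(a,B) \subseteq B$, contradicting the independence of $B$. Thus $C(a,B) = \{a\} \cup \bigl(C(a,B) \cap B\bigr)$, and it remains only to identify $C(a,B) \cap B$ with $\{b \in B : (a,b) \text{ is not an edge of } \bed_{A,B}\}$.

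By Definition~\ref{def:bedigraph}, for $b \in B$ the pair $(a,b)$ fails to be an edge of $\bed_{A,B}$ precisely when $(B - \{b\}) \cup \{a\}$ is a basis of $M$. So the proposition follows once we establish the well-known fact that, for $b \in B$, we have $b \in C(a,B)$ if and only if $(B - \{b\}) \cup \{a\}$ is a basis of $M$.

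For the forward direction, suppose $b \in C(a,B)$. The set $(B - \{b\}) \cup \{a\}$ has $r(M)$ elements, so it suffices to show it is independent. If it contained a circuit $C$, then $C \subseteq B \cup \{a\}$, so $C = C(a,B)$ by uniqueness; but $b \in C(a,B)$ while $b \notin (B - \{b\}) \cup \{a\}$, a contradiction. Hence $(B - \{b\}) \cup \{a\}$ is a basis. Conversely, if $b \notin C(a,B)$, then $C(a,B) \subseteq (B - \{b\}) \cup \{a\}$, so this set is dependent and therefore not a basis. Combining this fact with the translation in the previous paragraph gives exactly the claimed description of $C(a,B)$.

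The argument is entirely routine; the only points needing any care are the uniqueness of the circuit contained in $B \cup \{a\}$ and the observation that removing any one element of that circuit leaves an independent set of the correct cardinality. I do not expect a genuine obstacle here — this is precisely why the proposition was flagged as well-known and easy to prove.
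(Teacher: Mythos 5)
Your proof is correct; the paper offers no proof of this proposition, dismissing it as ``well-known and easy to prove,'' and your argument is exactly the standard one it has in mind: identify $C(a,B)\cap B$ via the fact that $b\in C(a,B)$ if and only if $(B-\{b\})\cup\{a\}$ is a basis, then translate through Definition~\ref{def:bedigraph}. Nothing is missing.
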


We now recall Hall's Theorem on matchings in a bipartite graph.  It
was originally stated and proved for systems of distinct
representatives by Philip Hall in \cite{halls_thm}.

\begin{theorem}\label{thm:Hall}
  Let $G$ be a bipartite graph with bipartition $\{X, Y\}$.  There is
  a matching that covers $X$ if and only if $|X'| \le |N(X')|$ for all
  sets $X' \subseteq X$.
\end{theorem}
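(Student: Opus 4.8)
The plan is to prove the two implications of Hall's Theorem separately, with essentially all of the work going into the converse. The forward implication is immediate: if $\sigma$ is a matching that covers $X$, then for any $X' \subseteq X$ the images $\sigma(x)$, $x \in X'$, are distinct elements of $N(X')$, so $|N(X')| \ge |X'|$. For the converse, I would induct on $|X|$. If $|X| = 1$, Hall's condition applied to $X$ itself says the unique element of $X$ has a neighbour in $Y$, and matching it to such a neighbour covers $X$. So suppose $|X| \ge 2$ and that the result holds whenever the left part is smaller.

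I would then distinguish two cases according to whether Hall's condition has slack on every proper part. \emph{Case 1: $|N(X')| \ge |X'| + 1$ for every non-empty $X' \subsetneq X$.} Choose any $x \in X$ and any neighbour $y$ of $x$ (one exists since $|N(\{x\})| \ge 1$ by Hall's condition), and let $G'$ be obtained from $G$ by deleting $x$ and $y$, with left part $X - \{x\}$. For each $X' \subseteq X - \{x\}$ we have $N_{G'}(X') \supseteq N_G(X') - \{y\}$, hence $|N_{G'}(X')| \ge |N_G(X')| - 1 \ge |X'|$, where the last inequality uses the slack hypothesis when $X' \ne \varnothing$ (and is trivial otherwise); note $X'$ is necessarily a proper subset of $X$. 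Thus $G'$ satisfies Hall's condition, so by induction it has a matching covering $X - \{x\}$, and adjoining the edge $xy$ produces one covering $X$.

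\emph{Case 2: some non-empty $A \subsetneq X$ satisfies $|N(A)| = |A|$.} Let $G_1$ be the subgraph of $G$ induced on $A \cup N(A)$, and $G_2$ the subgraph induced on $(X - A) \cup (Y - N(A))$. In $G_1$, every neighbourhood of a subset of $A$ coincides with its neighbourhood in $G$, so Hall's condition holds there; since $|A| < |X|$, induction yields a matching covering $A$ in $G_1$, and because $|N(A)| = |A|$ this matching also saturates $N(A)$. In $G_2$, take any $X' \subseteq X - A$. In $G$ we have $N_G(A \cup X') = N_G(A) \cup N_G(X')$, and since $N_{G_2}(X') = N_G(X') - N(A)$ this gives $|N_G(A \cup X')| = |A| + |N_{G_2}(X')|$; Hall's condition applied to $A \cup X'$ in $G$ therefore forces $|N_{G_2}(X')| \ge |X'|$. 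As $|X - A| < |X|$, induction supplies a matching covering $X - A$ in $G_2$. The two matchings use disjoint sets of vertices, so their union is a matching of $G$ covering $X$, which completes the induction.

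The main obstacle is the bookkeeping in Case 2: one must verify that Hall's condition really does descend to both induced subgraphs and that each has a strictly smaller left part — this is exactly where "$A$ non-empty" and "$A$ a proper subset" get used. The identity $|N_G(A \cup X')| = |A| + |N_{G_2}(X')|$ is the one computation that is not a pure unwinding of definitions; it relies on $|N(A)| = |A|$, so that every vertex of $N(A)$ is already accounted for in the count. (One could instead give the standard augmenting-path argument, but the inductive split seems cleanest here.)
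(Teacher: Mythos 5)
Your argument is the classical Halmos--Vaughan induction proof of Hall's Theorem, and it is correct and complete: the forward direction is the injectivity observation, and the converse splits correctly into the ``slack everywhere'' case (match one edge and delete both endpoints, using the slack to absorb the possible loss of one neighbour) and the ``tight set'' case (split into the induced subgraphs on $A\cup N(A)$ and on $(X-A)\cup(Y-N(A))$, verifying Hall's condition descends to each). The one computation you flag, $|N_G(A\cup X')|=|A|+|N_{G_2}(X')|$, is justified exactly as you say by $|N(A)|=|A|$. For comparison: the paper does not prove this statement at all --- it quotes Hall's Theorem as a known classical result with a citation to Hall's 1935 paper --- so there is no competing argument in the text; your self-contained proof is the standard one and nothing in it needs repair.
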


The next lemma, from \cite{ingleton_nonbo}, is of crucial importance,
so we fill in the sketch of the proof that was given there.

\begin{lemma}\label{lem:nbo}
  Let $A$ and $B$ be bases of a matroid $M$.  There is no
  exchange-ordering between $A$ and $B$ if and only if some subgraph
  of $\bed_{A,B}$ is an orientation of a complete bipartite graph
  $K_{s,t}$ for some $s\geq 2$ and $t\geq 2$ with $s + t = r(M) + 1$.
\end{lemma}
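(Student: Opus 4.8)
The plan is to recast the existence of an exchange-ordering as a perfect-matching problem and feed it to Hall's Theorem. Using the two disjoint copies of $A$ and $B$ that underlie $\bed_{A,B}$, define an auxiliary bipartite graph $H$ on the same vertex set by joining $a\in A$ to $b\in B$ precisely when \emph{neither} $(a,b)$ nor $(b,a)$ is an arc of $\bed_{A,B}$; by Definition~\ref{def:bedigraph} this means exactly that both $(A-a)\cup\{b\}$ and $(B-b)\cup\{a\}$ are bases of $M$. Unwinding Definition~\ref{def:kbo} with $k=1$, a bijection $\sigma\from A\to B$ is an exchange-ordering if and only if the pairs $\{x,\sigma(x)\}$ form a perfect matching of $H$; so ``$A$ and $B$ have no exchange-ordering'' is equivalent to ``$H$ has no perfect matching.''

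Two small facts drive everything. First, every vertex of $H$ has a neighbor: the two copies of any $a\in A\cap B$ are adjacent in $H$, while for $a\in A-B$ the symmetric basis-exchange property (Section~\ref{sec:intro}) supplies $b\in B-A$ with both $(A-a)\cup\{b\}$ and $(B-b)\cup\{a\}$ bases, and symmetrically for vertices of $B$; in particular $N_H(A)=B$. It follows that any $S\subseteq A$ with $|N_H(S)|<|S|$ satisfies $2\le|S|\le r(M)-1$ (a singleton cannot be deficient, and $S=A$ cannot since $|N_H(A)|=r(M)$); in particular such an $S$ exists only when $r(M)\ge 3$.

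For the forward direction, assume there is no exchange-ordering. Then $H$ has no perfect matching, so Hall's Theorem (Theorem~\ref{thm:Hall}) gives $S\subseteq A$ with $|N_H(S)|<|S|$; set $T=B-N_H(S)$. By the previous paragraph $2\le|S|\le r(M)-1$, hence $|T|=r(M)-|N_H(S)|\ge r(M)-|S|+1\ge 2$ and $|S|+|T|\ge r(M)+1$. For every $a\in S$ and $b\in T$, since $b\notin N_H(S)$ the vertices $a,b$ are non-adjacent in $H$, so $(a,b)$ or $(b,a)$ is an arc of $\bed_{A,B}$; choosing one such arc per pair produces, on vertex set $S\cup T$, a subgraph of $\bed_{A,B}$ that is an orientation of $K_{|S|,|T|}$. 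Since $r(M)\ge 3$, one can then delete vertices from whichever of $S,T$ is larger, one at a time and never below size $2$, until $|S|+|T|=r(M)+1$, as required. Conversely, if some subgraph of $\bed_{A,B}$ is an orientation of $K_{s,t}$ with $s,t\ge 2$ and $s+t=r(M)+1$, let $S$ and $T$ be its two parts, lying in $A$ and $B$ respectively (swapping $A$ and $B$ if needed, the setup being symmetric). Every pair $a\in S$, $b\in T$ carries an arc of $\bed_{A,B}$, so $a$ and $b$ are non-adjacent in $H$; thus $N_H(S)\cap T=\varnothing$, whence $|N_H(S)|\le r(M)-|T|=s-1<|S|$, Hall's condition fails, $H$ has no perfect matching, and so $A$ and $B$ have no exchange-ordering.

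The only step that is more than formal bookkeeping is obtaining the \emph{exact} equality $s+t=r(M)+1$ in the forward direction: Hall's Theorem yields only \emph{some} deficient $S$, giving the inequality $|S|+|T|\ge r(M)+1$, and one must then trim down to equality while keeping both parts of size at least $2$. This works because deficiency forces $|S|\le r(M)-1$, hence $r(M)\ge 3$, leaving enough slack above the extreme configuration $|S|=|T|=2$; everything else reduces to the definitions of $\bed_{A,B}$ and of a $1$-exchange-ordering.
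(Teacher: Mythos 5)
Your proposal is correct and follows essentially the same route as the paper: both define the undirected bipartite graph whose edges are the pairs admitting a symmetric single-element exchange, observe that an exchange-ordering is exactly a perfect matching of that graph, and translate the failure of Hall's condition into an orientation of $K_{s,t}$ inside $\bed_{A,B}$. Your write-up is in fact somewhat more careful than the paper's about the two points it glosses over, namely trimming a deficient pair down to the exact equality $s+t=r(M)+1$ and using the symmetric basis-exchange property to rule out $s=1$ or $t=1$.
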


\begin{proof}
  Let $\bar{\Omega}$ be the undirected bipartite graph with the same
  bipartition as $\bed_{A,B}$, in which $ab$, with $a\in A$ and $b\in
  B$, is an edge if and only if neither $(a,b)$ nor $(b,a)$ is an edge
  of $\bed_{A,B}$.  In other words, $ab$ is in $E(\bar{\Omega})$
  exactly when both $(A - \sg{a}) \cup \sg{b}$ and $(B - \sg{b}) \cup
  \sg{a}$ are bases of $M$.  Thus, $A$ and $B$ have an
  exchange-ordering if and only if $\bar{\Omega}$ has a perfect
  matching.

  By Hall's Theorem, $\bar{\Omega}$ has no perfect matching if and
  only if there is a subset $X \subseteq A$ with $|X| > |N(X)|$, where
  $N(X) \subseteq B$ is the neighborhood of $X$ in $\bar{\Omega}$.
  Now $$|X| + |B - N(X)| = r(M) + |X| - |N(X)|,$$ so the inequality
  $|X| > |N(X)|$ is equivalent to $|X| + |B - N(X)| \ge r(M) + 1$.
  Also, for every $x\in X$ and $y \in B - N(X)$, either $(x,y)$ or
  $(y,x)$ is an edge of $\bed_{A,B}$.  It follows that $\bar{\Omega}$
  has no perfect matching if and only if $\bed_{A,B}$ has a
  restriction that is an orientation of $K_{s,t}$ for some $s$ and $t$
  with $s + t = r(M) + 1$.  By the symmetric basis-exchange property,
  neither $s$ nor $t$ can be $r$, so $s\geq 2$ and $t\geq 2$.
\end{proof}

Recall that a matroid $M$ is \emph{paving} if it contains no circuit
of size less than $r(M)$; it is \emph{sparse-paving} if both $M$ and
$M^*$ are paving.  It is well-known that the classes of paving
matroids and sparse-paving matroids are minor-closed.  From Figure
\ref{fig:Delta3} and Lemma \ref{lem:nbo}, we see that $M(K_4)$ is not
base-orderable.  In \cite{desousa_welsh}, de Sousa and Welsh proved
that a binary matroid is base-orderable if and only if it has no
$M(K_4)$-minor.  We next prove that $M(K_4)$ is also the only obstacle
to base-orderability for paving matroids.

\begin{theorem}
  A paving matroid $M$ is base-orderable if and only if $M$ has no
  $M(K_4)$-minor.
\end{theorem}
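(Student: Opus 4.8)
The plan is to prove both directions. The forward direction is immediate: base-orderability is minor-closed and $M(K_4)$ is not base-orderable (as shown by Figure~\ref{fig:Delta3} together with Lemma~\ref{lem:nbo}), so a base-orderable matroid cannot have an $M(K_4)$-minor. The work is entirely in the converse. So suppose $M$ is a paving matroid with no $M(K_4)$-minor; we must show $M \in \mathcal{BO}$.

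I would argue by contradiction using Lemma~\ref{lem:kbo_disjoint_union} (with $k=1$): if $M$ is not base-orderable, it has a minor $N$ whose ground set is the disjoint union of two bases $A$ and $B$ with no exchange-ordering. Since the class of paving matroids is minor-closed, $N$ is also paving, and it has rank $r = r(N)$ with $|E(N)| = 2r$. By Lemma~\ref{lem:nbo}, $\bed^N_{A,B}$ contains a subgraph that is an orientation of $K_{s,t}$ with $s,t \ge 2$ and $s+t = r+1$. Let $A' \subseteq A$ and $B' \subseteq B$ be the corresponding vertex sets, so $|A'| = s$, $|B'| = t$. The key point is to translate the edges of this $K_{s,t}$ back into the matroid: for each $a \in A'$ and $b \in B'$, either $(a,b)$ or $(b,a)$ is an edge of $\bed^N_{A,B}$, meaning either $(B - b)\cup a$ or $(A - a)\cup b$ fails to be a basis. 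In a paving matroid, a set of size $r$ fails to be a basis precisely when it is a dependent flat, i.e.\ it equals a closed set of rank less than $r$; more usefully, $(B-b)\cup a$ is dependent iff $a \in \cl_N(B - b)$. So each $K_{s,t}$ edge says $a \in \cl_N(B-b)$ or $b \in \cl_N(A-a)$.

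The heart of the proof is to extract an $M(K_4)$-minor from this configuration, contradicting the hypothesis. I expect this to be the main obstacle. One natural route: since $s+t = r+1$ with $s,t \ge 2$, we have $r \ge 3$, and I would try to restrict attention to a small sub-configuration — say pick $a_1, a_2 \in A'$ and $b_1, b_2 \in B'$ and three further elements to pad out to rank $3$ — then contract/delete down to a rank-$3$ matroid on $6$ elements. Because $N$ is paving, the contraction of a rank-$(r-3)$ independent set is again paving of rank $3$, and in rank $3$ the only non-base-orderable matroid on $6$ elements built from two disjoint bases is essentially $M(K_4)$ (its basis-exchange digraph being the oriented $C_4$ of Figure~\ref{fig:Delta3}). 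Concretely, I would choose an independent set $I$ of size $r-3$ inside $(A - A') \cup (B - B')$ — checking this set is large enough, which uses $|A - A'| + |B - B'| = 2r - (s+t) = r-1 \ge r-3$ — contract it, and delete the remaining elements to land on a rank-$3$ minor whose two sides restrict to the relevant parts of $A'$ and $B'$. The delicate part is verifying that after this contraction the relevant $(B-b)\cup a$–type sets remain dependent (closure can only grow under contraction, so dependence is preserved — this should be fine) and, conversely, that enough independence survives so that the minor genuinely has two disjoint bases with no exchange-ordering; here one uses that $\bed$ restricted appropriately still contains an oriented $K_{2,2}$, hence by Lemma~\ref{lem:nbo} the rank-$3$ minor is not base-orderable, and the only such paving matroid is $M(K_4)$.

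Let me restructure the final step more carefully, since the padding argument is fiddly. A cleaner approach: work directly in $N$. Fix $a_1,a_2 \in A'$ and $b_1,b_2 \in B'$. The oriented $K_{s,t}$ gives, for each of the four pairs $(a_i,b_j)$, that $a_i \in \cl_N(B - b_j)$ or $b_j \in \cl_N(A - a_i)$. Consider the rank-$3$ minor obtained by contracting a carefully chosen independent subset $J \subseteq (A \cup B) - \{a_1,a_2,b_1,b_2\}$ of size $r-3$ and deleting everything else outside $\{a_1,a_2,b_1,b_2\}$; one must choose $J$ so that $\{a_1,a_2\}$ and $\{b_1,b_2\}$ stay independent and the span conditions descend. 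In the resulting rank-$3$ paving matroid $N'$ on $\{a_1,a_2,b_1,b_2,*,*\}$, the sets $\{a_1,a_2\}$ and $\{b_1,b_2\}$ sit inside complementary bases, and the four descended conditions force the basis-exchange digraph of these two bases to contain an oriented $K_{2,2}$. By Lemma~\ref{lem:nbo} applied in $N'$ (rank $3$, so $s=t=2$), $N'$ is not base-orderable; being paving of rank $3$ on $6$ elements, $N'$ has every $3$-subset either a basis or a line, and a short case check — or simply the observation that $N'$ must have a $K_{2,2}$ in a $\bed$, which in rank $3$ is exactly the shape in Figure~\ref{fig:Delta3} — shows $N' \cong M(K_4)$. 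This contradicts the hypothesis that $M$, and hence every minor of $M$, has no $M(K_4)$-minor, completing the proof.
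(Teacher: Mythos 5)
Your forward direction and the opening reduction (Lemma~\ref{lem:kbo_disjoint_union} followed by Lemma~\ref{lem:nbo}, yielding a paving minor $N$ that is the disjoint union of two bases $A,B$ with an oriented $K_{s,t}$, $s+t=r(N)+1$, inside $\bed^N_{A,B}$) match the paper. The gap is in everything after that. The step you are missing is to use the paving hypothesis \emph{at this point}: since $A\cap B=\varnothing$ and every circuit of $N$ has at least $r(N)$ elements, each fundamental circuit $C(a,B)$ contains all but at most one element of $B$, so by Proposition~\ref{prop:kbo_fund_circ} every vertex of $\bed^N_{A,B}$ has out-degree at most one. Counting the $st$ edges of the oriented $K_{s,t}$ against its $s+t$ vertices gives $st\le s+t$, which together with $s,t\ge 2$ forces $s=t=2$. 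Hence $r(N)=3$ and $|E(N)|=6$, so no further contraction is needed, and $N$ is identified as $M(K_4)$ (the paper does this by noting that transversal matroids are base-orderable and $M(K_4)$ is the only non-transversal rank-$3$ matroid on six elements).

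Your substitute for this step---choosing $a_1,a_2\in A'$, $b_1,b_2\in B'$ and contracting an independent $(r-3)$-set $J$ to land on a rank-$3$ minor---is precisely the part you flag as ``fiddly,'' and it does not go through as described. For the $B$-side dependency to descend, the $r$-element set $\bigl(\{b_1,b_2,y\}\cup J-\{b_j\}\bigr)\cup\{a_i\}$ must be dependent in $N$; the only way this follows from the dependence of $(B-\{b_j\})\cup\{a_i\}$ is containment, which forces $J\cup\{y\}\subseteq B$, while the $A$-side dependencies symmetrically force $J\subseteq A$. For $r>3$ no single $J$ serves both sides, and ``closure only grows under contraction'' does not help, because dependence of a set does not pass to the smaller $r$-element sets that must be dependent after contracting. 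Indeed, your reduction uses the paving hypothesis nowhere essential, so if it worked it would show that \emph{every} non-base-orderable matroid has an $M(K_4)$-minor---which is false, as the matroids $M_\alpha$ of Section~\ref{sec:em_gammoids} are non-base-orderable excluded minors for $\mathcal{BO}$ of every rank $r\ge 3$. The case $r(N)>3$ your construction is meant to handle is in fact vacuous for paving matroids, but your argument neither proves that nor handles it; the out-degree count is the missing ingredient. (A small side issue: a dependent $r$-set in a paving matroid is a circuit but need not be a flat, so ``fails to be a basis precisely when it is a dependent flat'' is not quite right, though you do not rely on it.)
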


\begin{proof}
  If $M$ has an $M(K_4)$-minor, then $M$ is not base-orderable since
  $\mathcal{BO}$ is minor-closed.  We now show that if $M$ is not
  base-orderable, then it has an $M(K_4)$-minor. By
  Lemma~\ref{lem:kbo_disjoint_union}, $M$ has a minor $N$ whose ground
  set is the union of two disjoint bases of $N$, say $A$ and $B$, that
  have no exchange-ordering.  By Lemma~\ref{lem:nbo}, the
  basis-exchange digraph $\bed^N_{A,B}$ has a subgraph that is an
  orientation of $K_{s,t}$ where $s + t = r(N) + 1$.  Since $N$ is
  paving and $A \cap B = \varnothing$,
  Proposition~\ref{prop:kbo_fund_circ} implies that the out-degree of
  any vertex in $\bed^N_{A,B}$ is at most one.  Therefore
  $|V(K_{s,t})| \ge |E(K_{s,t})|$, i.e., $s + t \ge st$.  The only
  solution to this inequality with $s, t > 1$ is $s=t=2$, so $r(N) =
  s+t-1=3$, and so $|E(N)| = 6$.  Transversal matroids are
  base-orderable, and the only rank-$3$ matroid on six elements that
  is not transversal is $M(K_4)$, so $N$ is $M(K_4)$.
\end{proof}

This theorem is interesting in light of the recent work of Pendavingh
and van der Pol \cite{counting_minor_closed} that the number of
sparse-paving matroids with no $M(K_4)$-minor is asymptotic to the
best-known lower bound on the number of sparse-paving matroids.  It is
conjectured that asymptotically almost all matroids are sparse-paving,
so it seems reasonable to also conjecture that almost all matroids are
base-orderable.

The next two results are implicit in Ingleton \cite{ingleton_nonbo}.

\begin{proposition}\label{prop:Delta_reverse}
  If $A$ and $B$ are disjoint bases of a matroid $M$ with $E(M) = A
  \cup B$, then $\bed^{M^{*}}_{A,B}$ is obtained from $\bed^M_{A,B}$
  by reversing the orientation of each edge.
\end{proposition}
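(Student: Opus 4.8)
The plan is to unwind the two definitions and apply the standard fact that the complement of a basis of $M$ is a basis of $M^{*}$. First I would note that $\bed^{M^{*}}_{A,B}$ is well defined: since $A$ and $B$ are disjoint bases of $M$ with $A\cup B = E(M)$, we have $|E(M)| = 2\,r(M)$, and $A = E(M)-B$ and $B = E(M)-A$ are complements of bases of $M$, hence bases of $M^{*}$; they are disjoint and their union is $E(M^{*})$. So $\bed^{M^{*}}_{A,B}$ and $\bed^{M}_{A,B}$ have the same vertex set with the same bipartition $\{A,B\}$ (no disjoint copies are needed since $A\cap B = \varnothing$).

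The crux is a one-line complementation computation. Fix $a\in A$ and $b\in B$. Since $E(M) = A\cup B$ with $A\cap B=\varnothing$, the complement of $(B-b)\cup a$ in $E(M)$ is exactly $(A-a)\cup b$. Hence $(B-b)\cup a$ is a basis of $M^{*}$ if and only if $(A-a)\cup b$ is a basis of $M$. Therefore, by Definition~\ref{def:bedigraph},
\begin{align*}
  (a,b)\in E(\bed^{M^{*}}_{A,B})
  &\iff (B-b)\cup a \text{ is not a basis of } M^{*}\\
  &\iff (A-a)\cup b \text{ is not a basis of } M\\
  &\iff (b,a)\in E(\bed^{M}_{A,B}).
\end{align*}
Symmetrically, the complement of $(A-a)\cup b$ is $(B-b)\cup a$, so $(b,a)\in E(\bed^{M^{*}}_{A,B})$ if and only if $(a,b)\in E(\bed^{M}_{A,B})$. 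Together these two equivalences say precisely that $\bed^{M^{*}}_{A,B}$ is obtained from $\bed^{M}_{A,B}$ by reversing the orientation of every edge.

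I do not anticipate a genuine obstacle here: the argument is a short verification, and the only point needing (minor) care is the bookkeeping of complements and the observation that the two digraphs can be taken on literally the same vertex set, which is immediate because $A$ and $B$ are disjoint. One could alternatively phrase the argument in terms of fundamental circuits and cocircuits via Proposition~\ref{prop:kbo_fund_circ}, but the direct complement computation seems cleanest.
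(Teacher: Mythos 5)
Your argument is correct and is essentially the paper's own proof: both rest on the single observation that, since $A$ and $B$ are disjoint with $A\cup B=E(M)$, the sets $(A-a)\cup b$ and $(B-b)\cup a$ are complementary, so one is a basis of $M$ exactly when the other is a basis of $M^{*}$. Your write-up just spells out the well-definedness and the definition-unwinding that the paper leaves implicit.
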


\begin{proof}
  Let $a \in A$ and $b \in B$.  Then $(A - \sg{a}) \cup \sg{b}$ is a
  basis of $M$ (or $M^*$) if and only if $(B - \sg{b}) \cup \sg{a}$ is
  a basis of $M^*$ (or $M$).
\end{proof}

The next proposition limits the structure of basis-exchange digraphs
of excluded minors for $\mathcal{BO}$.  

\begin{proposition}\label{prop:no_source_sink}
  Assume that bases $A$ and $B$ of a matroid $M$ have no
  exchange-ordering.  Let $\Gamma$ be a subgraph of $\bed_{A,B}$ that
  is an orientation of $K_{s,t}$ with $s+t=r(M)+1$ and $s,t\geq 2$.
  If $\Gamma$ has either a source or a sink, then $M$ is not an
  excluded minor for base-orderability.
\end{proposition}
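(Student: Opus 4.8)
The plan is to show that if $\Gamma$ has a source or a sink, then some single-element deletion or contraction of $M$ is already not base-orderable, so that $M$ itself cannot be an excluded minor. By Proposition~\ref{prop:Delta_reverse}-style duality (or simply by swapping the roles of $A$ and $B$ together with duality), it suffices to handle the case in which $\Gamma$ has a sink; say the sink is a vertex $b\in B$ lying in the part of size $t$. Being a sink of $\Gamma$ means every edge of $\Gamma$ incident to $b$ points into $b$, i.e., for each of the $s$ vertices $a$ in the other part of $\Gamma$, the pair $(a,b)$ is an edge of $\bed_{A,B}$, which means $(B-\{b\})\cup\{a\}$ is not a basis of $M$.

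The key step is to pass to $M\contract b$ (if $b$ is not a coloop; a sink of $\Gamma$ cannot be a coloop since a coloop would lie in every basis and the symmetric exchange property would forbid the relevant non-bases). In $M\contract b$, the sets $A-\{b\}$ and $B-\{b\}$ are bases — here I would first note $b\in A\cap B$ is impossible because $A$ and $B$ are bases with $A\cap B$ consisting of fixed points, and more to the point the sink $b$ together with the hypothesis $s\ge 2$ forces $(a,b)\in E(\bed_{A,B})$ for some $a\ne b$; if one wants $b\notin A$ one observes that for $b\in A\cap B$ one has $b=\sigma(b)$ in any candidate ordering and $b$ is not really an obstruction, so without loss of generality $b\in B-A$. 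Then I claim the restriction $\Gamma - b$ is a subgraph of $\bed^{M\contract b}_{A,B-\{b\}}$ that is an orientation of $K_{s,t-1}$. The point is that deleting a non-basis element and contracting $b$ does not create new bases among the relevant single exchanges, so edges of $\bed_{A,B}$ not incident to $b$ survive in $\bed^{M\contract b}_{A,B-\{b\}}$; I would verify this directly from the definition of the basis-exchange digraph and the behavior of bases under contraction.

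The remaining step is to check that $K_{s,t-1}$ still has both parts of size at least $2$, so that Lemma~\ref{lem:nbo} applies and witnesses that $A$ and $B-\{b\}$ have no exchange-ordering in $M\contract b$, making $M\contract b \notin \mathcal{BO}$ and hence $M$ not an excluded minor. We have $s\ge 2$ automatically; we need $t-1\ge 2$, i.e., $t\ge 3$. The potential gap is the boundary case $t=2$. Here I expect the main obstacle: when $t=2$, contracting the sink $b$ collapses $\Gamma$ to an orientation of $K_{s,1}$, which has a source (the single remaining vertex in the size-$(t-1)$ part) — but a star is not of the $K_{s,t}$ form with both parts $\ge 2$, so Lemma~\ref{lem:nbo} does not directly conclude. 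To handle $t=2$ I would instead argue more carefully: with $t=2$, say the two vertices of the size-$t$ part of $\Gamma$ are $b$ (the sink) and $b'$. Since $b$ is a sink, $(a,b)\in E(\bed_{A,B})$ for all $s$ vertices $a$ in the opposite part. In $M\contract b$ the set $\{b'\}\cup\{$those $a$'s$\}$ now has rank deficiency, and I would show directly — using that $(B-\{b\})\cup\{a\}$ is a non-basis of $M$ translates into $b'$ being in the closure of small sets in $M\contract b$ — that $A$ and $B-\{b\}$ fail to have an exchange-ordering in $M\contract b$, by exhibiting the relevant $K_{s',t'}$ with $s',t'\ge 2$ (with $s'=s$ and $t'=2$, the second block being $\{b'\}$ together with any previously-present sink structure, or by showing $b'$ itself becomes a sink in $\bed^{M\contract b}_{A,B-\{b\}}$ relative to a size-$s$ set). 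In short, the easy case $t\ge 3$ is routine from Lemma~\ref{lem:nbo}; the case $t=2$ requires a short separate analysis of how the rank function of $M\contract b$ interacts with the non-bases recorded by the sink, and that is where the real content lies.
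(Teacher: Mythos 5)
Your key step fails at the point where you contract the sink $b$ and assert that $A-\{b\}$ and $B-\{b\}$ are bases of $M\contract b$. After the standard reduction via Lemma~\ref{lem:kbo_disjoint_union} (which you implicitly use), $A$ and $B$ are disjoint and $b\in B-A$, so $A-\{b\}=A$ has $r(M)$ elements while $M\contract b$ has rank $r(M)-1$; hence $A$ is not a basis of $M\contract b$, and the digraph $\bed^{M\contract b}_{A,\,B-\{b\}}$ you propose to analyze is not defined. Contracting an element that lies in only one of the two disjoint bases destroys the other basis, so this route cannot produce a pair of bases of a proper minor between which the exchange-ordering fails. Your separate worry about the boundary case $t=2$ is downstream of this and moot.

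The missing idea is to first use the symmetric basis-exchange property to move the source (or sink) into the \emph{other} basis, creating a common element, and only then contract that common element. Concretely, for a source $a_1\in A$ of $\Gamma$ (the sink case follows by duality using Proposition~\ref{prop:Delta_reverse}): since $a_1$ is a source, $(B-\{b_j\})\cup\{a_1\}$ fails to be a basis for every $b_j\in V(\Gamma)\cap B$, so symmetric exchange supplies some $b_r\in B$ outside $V(\Gamma)$ with $B'=(B-\{b_r\})\cup\{a_1\}$ a basis. One then checks that $\bed_{A,B'}$ still contains an orientation of $K_{s,t}$ on the same vertex set: edges $(b_j,a_i)$ survive trivially; for an edge $(a_i,b_j)$ of $\Gamma$ with $i>1$, the source condition gives $(a_1,b_j)\in E(\bed_{A,B})$, so $\{a_1,a_i\}\subseteq\cl(B-\{b_j\})$ by Proposition~\ref{prop:kbo_fund_circ} and hence $(B-\{b_j,b_r\})\cup\{a_1,a_i\}$ is not a basis, i.e., $(a_i,b_j)\in E(\bed_{A,B'})$; and $(a_1,b_j)\in E(\bed_{A,B'})$ because $a_1\in B'$. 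Since $A$ and $B'$ now share $a_1$, Lemma~\ref{lem:nbo} shows they have no exchange-ordering, and passing to the proper minor $M\contract a_1\delete b_r$ exhibits a non-base-orderable proper minor of $M$. Note that this argument preserves $K_{s,t}$ with the same parameters, so no case analysis on $t$ is needed.
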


\begin{proof}
  By Lemma~\ref{lem:kbo_disjoint_union}, there is nothing to show
  unless $A \cap B = \varnothing$ and $E(M) = A \cup B$.  Let $A =
  \{a_1, a_2, \dots, a_r\}$ and $B = \{b_1, b_2, \dots, b_r\}$, with
  $\{a_1, a_2, \dots, a_s\} \cup \{b_1, b_2, \dots, b_t\}$ being the
  vertex set of $\Gamma$.  By Propositions \ref{prop:kbobasics} and
  \ref{prop:Delta_reverse}, it suffices to treat the case in which
  $\Gamma$ has a source, say $a_1$.  By the symmetric basis-exchange
  property, we may assume that $B' = (B - \sg{b_r}) \cup \sg{a_1}$ is
  a basis of $M$.  We claim that $\bed_{A,B'}$ has a subgraph that is
  an orientation of $K_{s,t}$.  Clearly if $(b_j, a_i) \in
  E(\bed_{A,B})$, then $(b_j, a_i) \in E(\bed_{A, B'})$ as well.  Let
  $(a_i, b_j)$, with $i>1$, be an edge of $\Gamma$.  Now $(a_1, b_j)
  \in E(\bed_{A,B})$ since $a_1$ is a source of $\Gamma$.
  Proposition~\ref{prop:kbo_fund_circ} gives $\{a_1, a_i\} \subseteq
  \cl(B-b_j)$.  Therefore
  $$r\big((B - b_j) \cup \{a_1, a_i\}\big) = r(M) - 1.$$ Thus,
  $(B - \{b_j, b_r\}) \cup \{a_1, a_i\}$ is not a basis of $M$, so
  $(a_i, b_j)\in E(\bed_{A, B'})$.  Lastly note that $(a_1, b_j) \in
  E(\bed_{A,B'})$ since $a_1$ is itself a member of $B'$.  Thus,
  $\bed_{A,B'}$ has a subgraph that is an orientation of $K_{s,t}$, so
  $M \contract a_1 \delete b_r$ is not base-orderable by
  Lemma~\ref{lem:nbo}.
\end{proof}

\section{Background on cyclic flats of matroids}\label{sec:cyclic_flats}

The rest of this paper makes heavy use of cyclic flats, which we
briefly review in this section.  For a fuller account, see
\cite{bonin_cyclic_flats}.

Let $M$ be a matroid with rank function $r$.  A set $X \subseteq E(M)$
is \emph{cyclic} if $X$ is a (possibly empty) union of circuits;
equivalently, $X$ is cyclic if the restriction $M\restrictto{X}$ has
no coloops.  The collection of cyclic flats of $M$, denoted $\mcZ(M)$,
is a lattice under set-inclusion, with the same join as in the lattice
of flats, namely, $X\vee Y = \cl(X\cup Y)$.  An attractive feature of
cyclic flats is that they are well-behaved under duality.

\begin{proposition}\label{prop:cfdual}
  For a matroid $M$, we have $\mcZ(M^{*}) = \{E(M)-X : X \in
  \mcZ(M)\}$.
\end{proposition}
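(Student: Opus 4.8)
The plan is to prove the set equality $\mcZ(M^{*}) = \{E(M) - X : X \in \mcZ(M)\}$ by showing that complementation is a bijection between the cyclic flats of $M$ and those of $M^{*}$; since complementation within a fixed ground set is an involution, it suffices to prove just the inclusion that a complement of a cyclic flat of $M$ is a cyclic flat of $M^{*}$, and the reverse inclusion then follows by applying this to $M^{*}$ and using $M^{**} = M$. So I would fix $X \in \mcZ(M)$ and set $Y = E(M) - X$, and prove that $Y$ is both a flat of $M^{*}$ and a cyclic set of $M^{*}$.

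\medskip

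First I would recall the two standard dual characterizations: a set is a flat of $M^{*}$ if and only if its complement is cyclic in $M$ (i.e.\ a union of circuits of $M$), and dually a set is cyclic in $M^{*}$ (a union of cocircuits of $M$) if and only if its complement is a flat of $M$. These are the ``obvious'' facts that make cyclic flats self-dual, and each has a short proof. For the first: a cocircuit of $M$ is precisely the complement of a hyperplane of $M$, so a set $Z$ is a union of cocircuits of $M$ exactly when $E(M) - Z$ is an intersection of hyperplanes, i.e.\ a flat of $M$; applying this with $Z = Y$ and using that $X = E(M) - Y$ is cyclic in $M$, i.e.\ a union of circuits of $M$, which is exactly a union of cocircuits of $M^{*}$, gives that $Y$ is a flat of $M^{*}$. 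Symmetrically, since $X$ is a flat of $M$, its complement $Y$ is a union of cocircuits of $M$, hence a union of circuits of $M^{*}$, so $Y$ is cyclic in $M^{*}$. Thus $Y \in \mcZ(M^{*})$.

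\medskip

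The main (and only real) obstacle is establishing the two duality facts cleanly, namely the identifications ``circuits of $M$ $=$ cocircuits of $M^{*}$,'' ``cocircuits of $M$ $=$ complements of hyperplanes of $M$,'' and ``flats of $M$ $=$ intersections of hyperplanes of $M$''; all three are standard and can be cited from Oxley \cite{oxleybook}, so in the write-up I would simply invoke them rather than reprove them. With those in hand the argument is a one-line complementation bookkeeping: $X \mapsto E(M) - X$ carries unions of circuits of $M$ to unions of circuits of $M^{*}$ and flats of $M$ to flats of $M^{*}$, so it maps $\mcZ(M)$ into $\mcZ(M^{*})$, and being an involution on $2^{E(M)}$ it is a bijection $\mcZ(M) \to \mcZ(M^{*})$, which is exactly the claimed identity.
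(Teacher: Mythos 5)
Your argument is correct. The paper states Proposition~\ref{prop:cfdual} without proof (it is standard, from \cite{bonin_cyclic_flats}), so there is no in-text proof to compare against; your write-up supplies a valid one. The reduction to a single inclusion via the involution $X\mapsto E(M)-X$ together with $M^{**}=M$ is clean, and the two duality facts you invoke (circuits of $M^{*}$ are cocircuits of $M$, which are exactly the complements of hyperplanes of $M$; every flat is an intersection of hyperplanes, with the empty intersection giving $E(M)$) are indeed standard and cover the edge cases $X=\varnothing$ and $X=E(M)$. An equally short alternative, closer to how the paper phrases cyclicity (``$M\restrictto X$ has no coloops''), is to note that $X$ is a cyclic flat of $M$ if and only if $M\restrictto X$ has no coloops and $M\contract X$ has no loops, and that with $Y=E(M)-X$ one has $M^{*}\restrictto Y=(M\contract X)^{*}$ and $M^{*}\contract Y=(M\restrictto X)^{*}$, so the two conditions simply swap under duality; this avoids any mention of hyperplanes. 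Either route is fine.
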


A matroid $M$ is determined by $E(M)$ along with the pairs $(A,r(A))$
for $A\in \mcZ(M)$.  The following result from \cite{JulieSimsThesis,
  bonin_cyclic_flats} formulates matroids in these terms.

\begin{theorem}\label{thm:cyclic_flats}
  Let $\mcZ$ be a set of subsets of a set $S$ and let $r$ be an
  integer-valued function on $\mcZ$.  There is a matroid $M$ with
  $S=E(M)$ for which $\mcZ=\mcZ(M)$ and $r(X) =r_M(X)$ for all $X\in
  \mcZ$ if and only if
  \begin{enumerate}
  \item[(Z0)] $\mcZ$ is a lattice under inclusion,
  \item[(Z1)] $r(0_\mcZ) = 0$, where $0_\mcZ$ is the least element of
    $\mcZ$,
  \item[(Z2)] $0 < r(Y) - r(X) < |Y - X|$ for all sets $X, Y$ in
    $\mathcal{Z}$ with $X \subsetneq Y$, and
  \item[(Z3)] for all incomparable sets $X, Y$ in $\mcZ$ (i.e.,
    neither contains the other),
    $$r(X) + r(Y) \ge r(X \join Y) + r(X \meet Y) + |(X \cap Y) - (X
    \meet Y)|.$$
  \end{enumerate}
\end{theorem}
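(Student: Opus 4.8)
The plan is to prove this in two directions. The easy direction is necessity: assuming $\mcZ = \mcZ(M)$ and $r = r_M|_\mcZ$ for a matroid $M$, verify (Z0)--(Z3). Condition (Z0) is the standard fact (cited above before Proposition~\ref{prop:cfdual}) that $\mcZ(M)$ is a lattice with join $X \vee Y = \cl(X \cup Y)$; (Z1) is immediate since the least cyclic flat is $\cl(\varnothing)$, which has rank $0$; (Z2) follows because $X \subsetneq Y$ cyclic flats force $r(Y) > r(X)$ (a flat cannot contain a larger flat of the same rank) and $r(Y) - r(X) < |Y - X|$ because $Y - X$ contains a circuit of $M|_Y$ relative to $X$ (else $Y$ would have a coloop over $X$, contradicting that $Y$ is cyclic, or more carefully: $Y$ cyclic means $M|_Y$ has no coloop, so no single element of $Y-X$ is forced to increase the rank by its full amount — one spells this out via submodularity and the circuit structure). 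For (Z3), incomparable cyclic flats $X, Y$: submodularity gives $r(X) + r(Y) \ge r(X \cup Y) + r(X \cap Y) = r(X \vee Y) + r(X \cap Y)$, and then one must improve $r(X \cap Y)$ to $r(X \wedge Y) + |(X \cap Y) - (X \wedge Y)|$; since $X \wedge Y$ is the largest cyclic flat contained in $X \cap Y$, the elements of $(X \cap Y) - (X \wedge Y)$ are, informally, coloops in $M | (X \cap Y)$ modulo $X \wedge Y$, contributing one to the rank each — this needs a short argument using that $X \wedge Y = \cl_{M|(X\cap Y)}$ of the largest cyclic subset.

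For the harder, sufficiency direction, I would construct the candidate matroid directly from the data. Given $(\mcZ, r)$ satisfying (Z0)--(Z3), define a rank function on all of $2^S$ by
\begin{equation*}
  \rho(A) = \min\{\, r(Z) + |A - Z| : Z \in \mcZ \,\}.
\end{equation*}
The first task is to show $\rho$ is a matroid rank function, i.e., it is normalized ($\rho(\varnothing)=0$, using $0_\mcZ$ and (Z1)), monotone, and submodular. Normalization and monotonicity are quick; submodularity is the crux. To prove $\rho(A) + \rho(B) \ge \rho(A \cup B) + \rho(A \cap B)$, pick $Z_A, Z_B \in \mcZ$ achieving the minima for $A$ and $B$; the natural candidates for $A \cup B$ and $A \cap B$ are $Z_A \vee Z_B$ and $Z_A \wedge Z_B$, and the inequality one needs reduces, after bookkeeping on the set-difference terms, precisely to (Z3) (when $Z_A, Z_B$ are incomparable) or to a triviality (when comparable). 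I expect this submodularity computation — carefully tracking $|A - Z|$ type terms against $|(X \cap Y) - (X \wedge Y)|$ — to be the main obstacle, since one must handle the interaction between the lattice operations in $\mcZ$ and ordinary union/intersection in $2^S$; the key lemma is that $(A \cup B) - (Z_A \vee Z_B)$ and $(A \cap B) - (Z_A \wedge Z_B)$ decompose appropriately against $(A - Z_A)$ and $(B - Z_B)$.

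Once $\rho$ is known to be a matroid rank function, let $M$ be the matroid it defines on $S$; it remains to show $\mcZ(M) = \mcZ$ and that $\rho$ agrees with $r$ on $\mcZ$. First, for $Z \in \mcZ$, taking $Z$ itself in the minimum gives $\rho(Z) \le r(Z)$, and the reverse inequality $\rho(Z) \ge r(Z)$ follows from (Z2) applied along any chain from a minimizing $Z' \subseteq Z$ up to $Z$ (the strict inequality $r(Z) - r(Z') < |Z - Z'|$ in (Z2) is exactly what prevents $r(Z') + |Z - Z'|$ from dipping below $r(Z)$); so $\rho|_\mcZ = r$. Next one checks each $Z \in \mcZ$ is a cyclic flat of $M$: it is a flat because adding any $x \notin Z$ strictly increases $\rho$ (again (Z2), comparing $Z$ with $Z \vee \cl_M(Z \cup x)$ or arguing directly), and it is cyclic because for each $x \in Z$ one shows $\rho(Z - x) = \rho(Z)$ using (Z2) with the chain through $Z$. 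Finally, for the converse inclusion $\mcZ(M) \subseteq \mcZ$: given a cyclic flat $F$ of $M$, let $Z$ be a minimizer in the formula for $\rho(F)$; one argues $Z \subseteq F$ (else replace $Z$ by $Z \wedge \cl(\cdots)$, using (Z3)/(Z0)) and then $F \subseteq \cl_M(Z)$ with $\rho(F) = \rho(Z)$, forcing $F = \cl_M(Z)$; but $F$ cyclic then forces $F = Z$ by a minimality argument on the cyclic-flat lattice. This last step — pinning down that an arbitrary cyclic flat of $M$ must actually lie in $\mcZ$ — together with the submodularity verification, is where essentially all the content of (Z3) gets used, and I would budget most of the proof's length there.
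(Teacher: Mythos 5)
The paper offers no proof of Theorem~\ref{thm:cyclic_flats}; it is imported from \cite{JulieSimsThesis,bonin_cyclic_flats}, so there is no in-paper argument to compare against. Your plan is essentially the standard proof from those sources and is sound in outline: necessity via submodularity plus the identity $r(X\cap Y)=r(X\meet Y)+|(X\cap Y)-(X\meet Y)|$ (the meet being the closure of the union of the circuits contained in $X\cap Y$, whose complement in $X\cap Y$ consists of coloops of the restriction), and sufficiency by checking that $\rho(A)=\min\{r(Z)+|A-Z| : Z\in\mcZ\}$ is a matroid rank function, with submodularity reducing to (Z3) through exactly the counting lemma you single out. Two spots in the sketch need repair rather than mere expansion. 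First, to get $\rho(Z)=r(Z)$ for $Z\in\mcZ$ you must show $r(Z)\le r(Z')+|Z-Z'|$ for \emph{every} $Z'\in\mcZ$, not only for $Z'\subseteq Z$: for incomparable $Z'$ this uses (Z3) in the form $r(Z\meet Z')+|(Z\cap Z')-(Z\meet Z')|\le r(Z)+r(Z')-r(Z\join Z')\le r(Z')$, combined with (Z2) applied to $Z\meet Z'\subseteq Z$; a chain argument from ``a minimizing $Z'\subseteq Z$'' silently assumes a minimizer below $Z$ exists, and justifying that assumption already requires (Z3). Second, in the final step the containment goes the other way from what you wrote: for a cyclic flat $F$ of $M$ and \emph{any} minimizer $Z$ for $\rho(F)$, an element $x\in F-Z$ would give $\rho(F-x)\le r(Z)+|F-Z|-1=\rho(F)-1$, making $x$ a coloop of $M|F$; so cyclicity forces $F\subseteq Z$, hence $\rho(F)=r(Z)=\rho(Z)$, and $F=Z$ since both are flats of $M$ of the same rank. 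Aiming instead for a minimizer $Z\subseteq F$ is circular for a cyclic flat, since such a minimizer exists only when $F$ itself already lies in $\mcZ$. With these corrections the plan goes through.
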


The rank of a set $X\subseteq E(M)$, in terms of the ranks of cyclic
flats, is given by
\begin{equation}\label{eq:cfrank}
  r_M(X) = \min\{r(A) + |X-A| : A \in \mcZ(M)\}.
\end{equation}

We also require information about the cyclic flats of minors.

\begin{lemma}\label{lem:cfminor}
  Let $M$ be a matroid, and let $x \in E(M)$.  If $F\in \mcZ(M \delete
  x)$, then either $F$ or $F\cup x$ is a cyclic flat of $M$.  The same
  conclusion holds if $F\in \mcZ(M \contract x)$.
\end{lemma}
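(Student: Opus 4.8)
The plan is to prove the two cases (deletion and contraction) separately, though they are dual to each other in spirit, and to use the characterization of rank in terms of cyclic flats, equation~\eqref{eq:cfrank}, together with the defining properties of cyclic flats (flat $=$ closed, cyclic $=$ no coloops in the restriction).

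First I would treat the deletion case. Let $F \in \mcZ(M \delete x)$, so $F$ is a flat of $M\delete x$ and $M\delete x|_F = M|_F$ has no coloops. The set $F$ is already cyclic in $M$, since being a union of circuits is witnessed entirely within $F$ and circuits of $M|_F$ are circuits of $M$. So the only question is whether $F$ is closed in $M$. If $\cl_M(F) = F$ we are done with $F$ itself. Otherwise the only element that can be added is $x$: since $F$ is closed in $M\delete x$, no element of $E(M) - x - F$ is in $\cl_M(F)$, so $\cl_M(F) = F \cup x$. It then remains to check that $F \cup x$ is cyclic in $M$, i.e.\ that $x$ is not a coloop of $M|_{F\cup x}$. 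Because $x \in \cl_M(F)$, we have $r_M(F \cup x) = r_M(F)$, so $x$ lies on a circuit of $M|_{F \cup x}$ (adding $x$ to a basis of $M|_F$ creates a circuit through $x$); hence $F\cup x$ is a union of circuits, being the union of the circuits of $M|_F$ together with this one circuit through $x$. So $F \cup x \in \mcZ(M)$.

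For the contraction case I would argue dually: if $F \in \mcZ(M\contract x)$ then $E(M\contract x) - F = E(M^{*}\delete x) - F$, and by Proposition~\ref{prop:cfdual} this is a cyclic flat of $(M\contract x)^{*} = M^{*}\delete x$. Applying the deletion case already established, either $E(M\contract x) - F$ or $(E(M\contract x) - F)\cup x$ is a cyclic flat of $M^{*}$; taking complements in $E(M^{*}) = E(M)$ and using Proposition~\ref{prop:cfdual} once more, either $F \cup x$ or $F$ is a cyclic flat of $M$, which is exactly the claim. Alternatively one can give a direct argument mirroring the deletion case, using that cyclic sets and flats behave predictably under contraction, but the duality route is cleaner and reuses work.

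The main obstacle, such as it is, is the bookkeeping in the deletion case: one must be careful that "$F$ cyclic in $M\delete x$'' really does imply "$F$ cyclic in $M$'' (this is the easy direction, since deletion does not alter the restriction to $F$) and, in the non-closed subcase, that adjoining $x$ preserves cyclicity rather than creating a coloop. The key fact making this work is that $x \in \cl_M(F)$ forces $r_M(F\cup x) = r_M(F)$, which puts $x$ on a circuit inside $F \cup x$. No use of the numerical conditions (Z1)--(Z3) or of \eqref{eq:cfrank} is actually needed; the proof is purely about the closure and union-of-circuits definitions, plus duality for the second case.
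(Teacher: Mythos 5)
Your proposal is correct and follows essentially the same route as the paper: a direct closure-plus-cyclicity argument for the deletion case (observing that if $F$ is not closed in $M$ then $\cl_M(F)=F\cup x$ and $x$ is not a coloop of $M\restrictto{F\cup x}$), with the contraction case obtained by duality via Proposition~\ref{prop:cfdual}. You simply supply more of the routine details than the paper's two-line proof does.
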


\begin{proof}
  If a cyclic flat $F$ of $M \delete x$ is also a flat of $M$, then
  $F\in \mcZ(M)$; otherwise $x$ is not a coloop of $M\restrictto{F\cup
    x}$ and $\cl_M(F) = F\cup x$, so $F\cup x\in \mcZ(M)$. The second
  assertion follows by duality.
\end{proof}

We say that a matroid $N$ is \emph{freer} than $M$ if $E(M)=E(N)$ and
$r_M(X) \le r_N(X)$ for all $X\subseteq E(M)$.  We next formulate this
order (the weak order) in terms of cyclic flats.

\begin{lemma}\label{lem:cfwo}
  A matroid $N$ is freer than $M$ if and only if for all $F\in
  \mcZ(N)$, there is some $A\in \mcZ(M)$ with $r_M(A)+|F-A|\leq
  r_N(F)$.
\end{lemma}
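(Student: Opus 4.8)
The plan is to prove both directions by relating the rank-formula~\eqref{eq:cfrank} to the stated condition on cyclic flats. For the forward direction, suppose $N$ is freer than $M$, and fix $F \in \mcZ(N)$. I would apply the rank formula~\eqref{eq:cfrank} to compute $r_M(F) = \min\{r_M(A) + |F - A| : A \in \mcZ(M)\}$, and let $A$ be a cyclic flat of $M$ attaining this minimum. Then $r_M(A) + |F - A| = r_M(F) \le r_N(F)$, where the last inequality is exactly the hypothesis that $N$ is freer than $M$. This gives the desired $A \in \mcZ(M)$ immediately, so the forward direction is essentially just unwinding definitions together with~\eqref{eq:cfrank}.

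For the converse, assume that for every $F \in \mcZ(N)$ there is some $A \in \mcZ(M)$ with $r_M(A) + |F - A| \le r_N(F)$; I must show $r_M(X) \le r_N(X)$ for every $X \subseteq E(M) = E(N)$. Fix such an $X$. By~\eqref{eq:cfrank} applied to $N$, there is a cyclic flat $F \in \mcZ(N)$ with $r_N(X) = r_N(F) + |X - F|$. By hypothesis, choose $A \in \mcZ(M)$ with $r_M(A) + |F - A| \le r_N(F)$. The key computational step is then the chain
\begin{equation*}
  r_M(X) \le r_M(A) + |X - A| \le r_M(A) + |F - A| + |X - F| \le r_N(F) + |X - F| = r_N(X),
\end{equation*}
where the first inequality is~\eqref{eq:cfrank} for $M$ (taking the particular cyclic flat $A$ rather than the minimizer), the second uses $|X - A| \le |X - F| + |F - A|$ (every element of $X \setminus A$ lies in $X \setminus F$ or in $F \setminus A$), and the third is the hypothesis. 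This completes the converse.

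The only step requiring any care is the set-cardinality inequality $|X - A| \le |X - F| + |F - A|$ in the converse; everything else is a direct appeal to~\eqref{eq:cfrank} and the definition of ``freer.'' I do not anticipate a genuine obstacle here — the subtlety is merely choosing to compare $X$ against $A$ via the non-optimal cyclic flat $A$ in $M$'s rank formula, which is exactly what~\eqref{eq:cfrank} licenses since it expresses $r_M(X)$ as a \emph{minimum} over all cyclic flats. One should also note in passing that the empty cyclic flat $0_{\mcZ(M)}$ is always available, so $\mcZ(M)$ is nonempty and the minimum in~\eqref{eq:cfrank} is well-defined; no separate argument is needed for that.
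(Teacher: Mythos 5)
Your proof is correct and follows essentially the same route as the paper: the forward direction is the paper's ``necessity is clear'' step made explicit via the minimizer in equation~(\ref{eq:cfrank}), and your converse is the identical chain of inequalities, using $|X-A|\le |X-F|+|F-A|$ in the same way. Nothing to change.
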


\begin{proof}
  The necessity of the condition is clear.  We focus on the converse.
  For $X \subseteq E(M)$, we have $r_N(X)=r_N(F)+|X-F|$ for some $F\in
  \mcZ(N)$.  Now $r_M(A)+|F-A|\leq r_N(F)$ for some $A\in \mcZ(M)$ by
  assumption.  Since $|X-A|\leq |X-F|+|F-A|$, we have
  \begin{equation*}
    \begin{split}
      r_M(X) & \leq r_M(A)+|X-A|\\
      & \leq r_M(A)+|F-A|+|X-F|\\
      & \leq r_N(F)+|X-F|\\
      & = r_N(X).\\
    \end{split}
  \end{equation*}
  The first and last terms are the required inequality.
\end{proof}

We will use the Mason-Ingleton characterization of transversal
matroids.  

\begin{theorem}[The Mason-Ingleton condition]
  A matroid $M$ is transversal if and only if for all nonempty
  antichains $\mathcal{F}$ of $\mathcal{Z}(M)$,
  \begin{equation}\label{eq:micond}
    r(\cap\mathcal{F})
    \le \sum\limits_{\mathcal{F'} \subseteq \mathcal{F}}
    (-1)^{|\mathcal{F'}|+1}r(\cup\mathcal{F}').
  \end{equation}
\end{theorem}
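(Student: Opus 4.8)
The plan is to prove each direction via transversal presentations and the rank formula they yield through König's theorem: if $M$ has rank $r$ and $(A_1,\dots,A_r)$ presents $M$ (the independent sets being exactly the partial transversals), then
\[
  r_M(X)=\min_{S\subseteq\{1,\dots,r\}}\Bigl(r-|S|+\bigl|X\cap\textstyle\bigcup_{i\in S}A_i\bigr|\Bigr)
  \qquad(X\subseteq E(M)).
\]

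\emph{Sufficiency.} Assume \eqref{eq:micond} holds for every antichain of $\mcZ(M)$. Put $r=r(M)$ and, descending $\mcZ(M)$ from its top $E(M)$, define integers $m_F$ ($F\in\mcZ(M)$) by $\sum_{G\in\mcZ(M),\,G\supseteq F}m_G=r-r(F)$; so $m_{E(M)}=0$ and, summing over all of $\mcZ(M)$, $\sum_F m_F=r-r(0_\mcZ)=r$ by condition (Z1) of Theorem~\ref{thm:cyclic_flats}. Take the length-$r$ sequence containing $m_F$ copies of $E(M)-F$ for each $F$; I claim this presents $M$. The first point is that each $m_F\ge0$: inclusion--exclusion over the up-sets $\{G\in\mcZ(M):G\supseteq H\}$, the defining recursion, and the fact that $G\supseteq G_i$ for all $i\in T$ precisely when $G\supseteq\bigvee_{i\in T}G_i$, together show that if $\mcF_F=\{G_1,\dots,G_n\}$ consists of the cyclic flats covering $F$ from above in $\mcZ(M)$, then $m_F=\sum_{\varnothing\ne T}(-1)^{|T|+1}r\bigl(\bigvee_{i\in T}G_i\bigr)-r(F)$; since $\bigvee_{i\in T}G_i=\cl(\bigcup_{i\in T}G_i)$ has rank $r(\bigcup_{i\in T}G_i)$, that sum is the right side of \eqref{eq:micond} for $\mcF_F$, and as $F\subseteq\bigcap\mcF_F$ the inequality forces $m_F\ge r(\bigcap\mcF_F)-r(F)\ge0$ (when $F$ has at most one cover, use (Z2) instead). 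The second point is that the transversal matroid $M'$ so presented equals $M$. Grouping a set $S$ of indices according to the subfamily $\mcG\subseteq\mcZ(M)$ of cyclic flats whose complements it uses, and noting $\bigcup_{i\in S}A_i=E(M)-\bigcap\mcG$ and that using all copies of each member of $\mcG$ is optimal, the displayed formula becomes $r_{M'}(X)=\min_{\mcG\subseteq\mcZ(M)}\bigl(r-\sum_{F\in\mcG}m_F+|X-\bigcap\mcG|\bigr)$. Taking $\mcG=\{F\in\mcZ(M):F\supseteq Y\}$ for the cyclic flat $Y$ attaining the minimum in \eqref{eq:cfrank} gives $\bigcap\mcG=Y$ and $\sum_{F\in\mcG}m_F=r-r(Y)$, whence $r_{M'}(X)\le r(Y)+|X-Y|=r_M(X)$; for the reverse inequality, subadditivity of $r_M$ reduces $r-\sum_{F\in\mcG}m_F+|X-\bigcap\mcG|\ge r_M(X)$ to the claim $\sum_{F\in\mcZ(M),\,F\supseteq\bigcap\mcG}m_F\le r-r_M(\bigcap\mcG)$ for every intersection of cyclic flats. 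Thus $r_{M'}=r_M$, so $M$ is transversal.

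\emph{Necessity.} Let $M$ be transversal and pass to a maximal presentation $(A_1,\dots,A_r)$. I use the classical structure of maximal presentations (see \cite{bonin_cyclic_flats}): each $E(M)-A_i$ is a cyclic flat, and for every cyclic flat $F$, $r(F)=|\{i:A_i\cap F\ne\varnothing\}|$. A union of cyclic flats is cyclic and its closure is their join in $\mcZ(M)$, hence a cyclic flat, so the displayed formula upgrades this to $r(\bigcup\mcF')=|\{i:A_i\cap\bigcup\mcF'\ne\varnothing\}|$ for every nonempty subfamily $\mcF'$ of an antichain $\mcF=\{F_1,\dots,F_n\}$ of cyclic flats. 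Writing $X_j=\{i:A_i\cap F_j\ne\varnothing\}$, so that $r(F_j)=|X_j|$ and $r(\bigcup_{j\in T}F_j)=|\bigcup_{j\in T}X_j|$, the right side of \eqref{eq:micond} collapses, via the dual inclusion--exclusion identity $\sum_{\varnothing\ne T}(-1)^{|T|+1}|\bigcup_{j\in T}X_j|=|\bigcap_j X_j|$, to $|X_1\cap\dots\cap X_n|$. Finally, an independent subset of $\bigcap\mcF$ is matched into sets $A_i$, each of which meets $\bigcap\mcF\subseteq F_j$ for all $j$, so $r(\bigcap\mcF)\le|X_1\cap\dots\cap X_n|$, which is \eqref{eq:micond}.

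The step I expect to require the most work is the last claim of the sufficiency argument, $\sum_{F\in\mcZ(M),\,F\supseteq Y}m_F\le r-r_M(Y)$ for $Y$ an intersection of cyclic flats: here the discrepancy between a set-intersection of cyclic flats and its meet in $\mcZ(M)$ --- the quantity $|(X\cap Y)-(X\meet Y)|$ in condition (Z3) --- must be absorbed, by an induction along $\mcZ(M)$, or else circumvented by matching $\mcZ(M')$ against $\mcZ(M)$ through Lemma~\ref{lem:cfwo}. The other point where one reproduces rather than reinvents is the structure theory of maximal presentations invoked in the necessity half.
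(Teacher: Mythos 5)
The paper does not prove this theorem; it is quoted from the literature with a pointer to \cite{bonin_fund}, so there is no in-paper argument to compare against. What you have written is, in substance, the classical proof of Mason's theorem (necessity via maximal presentations, sufficiency via the M\"obius-type multiplicities $m_F$ and the complements of cyclic flats as a presentation), and both halves check out: the identity $r(\bigcup_{j\in T}F_j)=|\bigcup_{j\in T}X_j|$ does follow from the cyclic-flat case because $r(\bigcup_{j\in T}F_j)\le|\{i:A_i\cap\bigcup_{j\in T}F_j\ne\varnothing\}|\le|\{i:A_i\cap\bigvee_{j\in T}F_j\ne\varnothing\}|=r(\bigvee_{j\in T}F_j)=r(\bigcup_{j\in T}F_j)$, and the inclusion--exclusion computation of $m_F$ and the reduction of $r_{M'}=r_M$ to the filter inequality are all correct.

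The one step you leave open, $\sum_{F\in\mcZ(M),\,F\supseteq Z}m_F\le r-r_M(Z)$ for $Z$ an intersection of cyclic flats, needs no induction and no appeal to Lemma~\ref{lem:cfwo}: it is the same up-set inclusion--exclusion you already performed to show $m_F\ge0$, applied to the minimal elements $G_1,\dots,G_n$ of the filter $\{F\in\mcZ(M):F\supseteq Z\}$. Since $F\supseteq G_i$ for all $i\in T$ exactly when $F\supseteq\bigvee_{i\in T}G_i$, one gets
\begin{equation*}
\sum_{F\supseteq Z}m_F=\sum_{\varnothing\ne T}(-1)^{|T|+1}\Bigl(r-r\bigl(\textstyle\bigvee_{i\in T}G_i\bigr)\Bigr)=r-\sum_{\varnothing\ne T}(-1)^{|T|+1}r\bigl(\textstyle\bigcup_{i\in T}G_i\bigr)\le r-r\bigl(\textstyle\bigcap_iG_i\bigr)\le r-r_M(Z),
\end{equation*}
where the first inequality is \eqref{eq:micond} for the antichain $\{G_1,\dots,G_n\}$ and the second uses only $Z\subseteq\bigcap_iG_i$. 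The discrepancy $|(X\cap Y)-(X\meet Y)|$ that worried you never enters, because monotonicity of rank is all that is needed to pass from $\bigcap_iG_i$ down to $Z$. With that line added, your proof is complete; the only other ingredient taken on faith is the standard structure of maximal presentations, which is a reasonable citation given that the paper itself cites the entire theorem.
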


For a proof of this theorem, see \cite{bonin_fund}.  Inequality
(\ref{eq:micond}) trivially holds when $|\mathcal{F}| = 1$, and it
reduces to submodularity when $|\mathcal{F}| = 2$.  We will use the
following corollary.

\begin{corollary}\label{cor:micond}
  Let $M$ be a matroid. Fix $\mcG \subseteq 2^{E(M)}$ with $\mcZ(M)
  \subseteq \mcG$.  If inequality~\emph{(\ref{eq:micond})} holds for
  all nonempty antichains $\mathcal{F}$ of $\mathcal{G}$ with $|\mcF|
  \ge 3$, then $M$ is transversal.
\end{corollary}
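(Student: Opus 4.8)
The plan is to reduce the hypothesis about antichains of $\mcG$ to the corresponding hypothesis about antichains of $\mcZ(M)$, so that the Mason--Ingleton condition applies directly. So suppose inequality (\ref{eq:micond}) holds for every nonempty antichain $\mcF$ of $\mcG$ with $|\mcF|\ge 3$; we must verify (\ref{eq:micond}) for every nonempty antichain $\mcF$ of $\mcZ(M)$. Since $\mcZ(M)\subseteq \mcG$, every antichain of $\mcZ(M)$ is an antichain of $\mcG$, so the only antichains of $\mcZ(M)$ not already covered by the hypothesis are those with $|\mcF|\le 2$. For $|\mcF|=1$ inequality (\ref{eq:micond}) is trivial, as noted in the paper. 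For $|\mcF|=2$, say $\mcF=\{X,Y\}$ with $X,Y$ incomparable, the inequality reads $r(X\cap Y)\le r(X)+r(Y)-r(X\cup Y)$, which is just submodularity of the rank function of $M$; this holds for all $X,Y\subseteq E(M)$, hence in particular here. Thus inequality (\ref{eq:micond}) holds for all nonempty antichains of $\mcZ(M)$, and the Mason--Ingleton theorem gives that $M$ is transversal.

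The one point that needs a little care, and which I would flag as the only real content, is that the two-element case genuinely reduces to unrestricted submodularity rather than to some relation internal to $\mcZ(M)$: the sets $X\cap Y$ and $X\cup Y$ need not themselves be cyclic flats, but that is irrelevant because submodularity $r(X\cap Y)+r(X\cup Y)\le r(X)+r(Y)$ holds for arbitrary subsets of $E(M)$. One should also double-check the edge case $\mcF=\varnothing$ is excluded by the word "nonempty" in the statement of the Mason--Ingleton condition, so there is nothing to verify there. No other antichains of $\mcZ(M)$ escape the hypothesis, since an antichain of $\mcZ(M)$ is automatically an antichain of the larger collection $\mcG$. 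This completes the argument; there is no hard step, only the bookkeeping of which antichains are and are not covered by the hypothesis.
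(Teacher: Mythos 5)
Your proof is correct and is essentially the argument the paper intends: the paper leaves the corollary unproved beyond the preceding remarks that inequality~(\ref{eq:micond}) is trivial for $|\mcF|=1$ and is submodularity for $|\mcF|=2$, and you fill in exactly that reduction, noting that antichains of $\mcZ(M)$ are antichains of $\mcG$.
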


\section{Ingleton's conjecture}\label{sec:ingletons_conjecture}

In \cite{ingleton_nonbo}, Ingleton discussed an infinite set of
matroids that he conjectured to be excluded minors for $\mathcal{BO}$.
His conjectured excluded minors are associated to what he called
critical graphs; however, he gave the definition of these matroids
only for critical graphs that lack a structure that we call an
obstruction.  For a critical graph with no obstructions, he gave two
families of sets and said that the bases of the associated matroid are
their common transversals; in Sections \ref{subsec:cgptm} and
\ref{subsec:noobs}, we develop a view of these matroids in terms of
cyclic flats and show that Ingleton's description of the bases applies
precisely when obstructions are absent.  The properties we prove about
obstructions show that they are relatively well-behaved, and in
Section \ref{subsec:wobs} we define a likely candidate for the
conjectured excluded minors that are associated to critical graphs
with obstructions (that material is not used in the rest of the
paper). In Section \ref{subsec:data}, we present data that supports
the conjecture about excluded minors.

\subsection{Critical graphs and pairs of transversal
  matroids}\label{subsec:cgptm}

We start with a fundamental definition due to Ingleton.

\begin{definition}\label{def:ingleton_critical}
  Let $A$ and $B$ be disjoint sets of size $r$, where $r \ge 3$.  A
  bipartite digraph $\Delta$ with bipartition $\{A, B\}$ is a
  \emph{critical graph} if there are subsets $X$ of $A$ and $Y$ of $B$
  such that
  \begin{enumerate}
  \item $|X| + |Y| = r + 1$,
  \item for all $x\in X$ and $y\in Y$, exactly one of $(x,y)$ and
    $(y,x)$ is in $E(\Delta)$,
  \item if $(u,v)\in E(\Delta)$, then $\{u,v\} \subseteq X\cup Y$, and
  \item no element of $X\cup Y$ is a source or sink of $\Delta$.
  \end{enumerate}
\end{definition}

Thus, a critical graph on $2r$ vertices is an orientation of
$K_{s,t}$, for some $s\geq 2$ and $t\geq 2$ with $s+t=r+1$, having
neither a source nor a sink, with $r-1$ isolated vertices adjoined.
For example, the digraph in Figure \ref{fig:Delta3} is a critical
graph.  Definition \ref{def:ingleton_critical} is motivated largely by
Lemma \ref{lem:nbo} and Proposition \ref{prop:no_source_sink}.  

Ingleton said that for each critical graph $\Delta$, he could
construct a matroid $M(\Delta)$ on $A\cup B$ in which $A$ and $B$ are
bases and the basis-exchange digraph $\bed^{M(\Delta)}_{A,B}$ is
$\Delta$; furthermore, all excluded minors for $\mathcal{BO}$ occur
among what he called the good specializations of these matroids
$M(\Delta)$.  (One property of good specializations is that they can
have more dependent sets.)  Thus, the idea is to construct, for each
critical graph $\Delta$, a matroid $M(\Delta)$ that has $\Delta$ as a
basis-exchange digraph (so $M(\Delta) \not \in \mathcal{BO}$ by Lemma
\ref{lem:nbo}) and whose dependent sets are, as much as possible, just
those that are forced by $\Delta$.

To see what structure $\Delta$ imposes on $M(\Delta)$, let $M$ be a
rank-$r$ matroid, with $r\ge 3$, where $E(M)$ is the disjoint union of
two bases, $A$ and $B$, and $\bed^M_{A,B}$ is a critical graph
$\Delta$.  Let $X$ and $Y$ be as in
Definition~\ref{def:ingleton_critical}.  Using fundamental circuits,
as in Proposition \ref{prop:kbo_fund_circ}, we recast what $\Delta$
gives us. We have
\begin{enumerate}
\item proper subsets $X$ of $A$ and $Y$ of $B$ with $|X|+|Y| = r+1$,
\item a fundamental circuit $C(y,A)$, for each $y\in Y$, with
  $A-X\subsetneq C(y,A)-y\subsetneq A$ and $$A= \bigcup_{y\in
    Y}C(y,A)-y,$$
\item a fundamental circuit $C(x,B)$, for each $x\in X$, with
  $B-Y\subsetneq C(x,B)-x\subsetneq B$ and $$B= \bigcup_{x\in
    X}C(x,B)-x,$$
\item whenever $x\in X$ and $y\in Y$, exactly one of the statements
  $x\in C(y,A)$ and $y\in C(x,B)$ holds, and
\item $C(b,A) = A\cup b$ for $b\in B-Y$, and $C(a,B) = B\cup a$ for
  $a\in A-X$.
\end{enumerate}
For a subset $A'$ of the basis $A$, the flat $\cl_M(A') = A' \cup
\{b\in B : C(b,A)-\sg{b} \subseteq A'\}$ has rank $|A'|$.  This flat
is cyclic if for each $a \in A'$, there is a $b\in \cl_M(A') \cap B$
with $a \in C(b, A)$.  The counterparts of these conclusions for
subsets $B'$ of $B$ also hold.  There may, of course, be circuits of
$M$ besides the fundamental circuits that $\Delta$ gives.

As we will see, in many cases the minimal structure that $\Delta$
imposes on $M(\Delta)$ is enough to determine $M(\Delta)$.  Let
$\Delta$ be a critical graph with $r$, $A$, $B$, $X$, and $Y$ as in
Definition~\ref{def:ingleton_critical}.  We begin to describe a
candidate for $M(\Delta)$ by specifying some of its cyclic flats and
their ranks.  From the observations above, we see that in order to
have $\Delta = \bed^{M(\Delta)}_{A,B}$, certain cyclic flats must be
present in $\mcZ(M(\Delta))$.  For $b\in B$, we define
\begin{equation}\label{eq:CDeltabA}
  C_\Delta(b,A) = \{b\}\cup \{a\in A : (b,a) \not\in E(\Delta)\}.
\end{equation}
We extend this notation as follows: for $B' \subseteq B$, we define
\begin{equation}\label{eq:CDeltaB'}
  C_\Delta(B',A)= \bigcup_{b\in B'}C_\Delta(b,A).
\end{equation}
Now we define $\mcZ_A$ and the ranks of its sets as follows: for each
$B' \subseteq B$, we adjoin the set
\begin{equation}\label{form:Acf}
  D_\Delta(B') = C_\Delta(B',A) \cup \{b\in B : C_\Delta(b,A) - \sg{b}
  \subseteq C_\Delta(B',A)\} 
\end{equation}
to $\mcZ_A$ and set $r(D_\Delta(B'))=|D_\Delta(B') \cap A|$.  Note
that $\varnothing$ is in $\mcZ_A$ with rank $0$ (take
$B'=\varnothing$), and $A\cup B$ is in $\mcZ_A$ with rank $r$ (take
$B'=B$, say).  Also note that
\begin{equation*}
  D_\Delta(B') = C_\Delta(D_\Delta(B')\cap B,A)
\end{equation*}
and thus
\begin{equation}\label{form:Acfimp}
  D_\Delta(B') \cap Y = \{y\in Y : C_\Delta(y,A) - \sg{y} \subseteq
  D_\Delta(B')\}. 
\end{equation}

Likewise construct $\mcZ_B$.  Specifically, for $a\in A$, we define
$$C_\Delta(a,B) = \{a\}\cup \{b\in B : (a,b) \not\in E(\Delta)\}.$$
For $A' \subseteq A$, we define
\begin{equation}\label{eq:CDeltaA'}
  C_\Delta(A',B) = \bigcup_{a\in A'}C_\Delta(a,B),
\end{equation}
and we adjoin the set
\begin{equation}\label{form:Bcf}
  D_\Delta(A') = C_\Delta(A',B) \cup \{a\in A : C_\Delta(a,B) - \sg{a}
  \subseteq C_\Delta(A',B)\} 
\end{equation}
to $\mcZ_B$ with rank $|D_\Delta(A') \cap B|$.  

Set $\mcZ_\Delta=\mcZ_A\cup \mcZ_B$.  Since $\mcZ_A\cap \mcZ_B =
\{\varnothing, A\cup B\}$, there is no ambiguity as to the ranks of
the sets in $\mcZ_\Delta$.

In the proof of the next result, we use the following observations
about the transversal matroid $M$ that arises from a bipartite graph.
By Theorem \ref{thm:Hall}, the circuits of $M$ are the subsets $W$ of
$E(M)$ for which $|N(W)|<|W|$ while $|N(Z)|\geq|Z|$ whenever
$Z\subsetneq W$.  Thus, if $W$ is a circuit and $w\in W$, then
$|N(W)|=|N(W-\{w\})|=r(W)$, and so $N(W)= N(W-\{w\})$.  Also, if
$|N(U)| = r(U)$, then $\cl(U) = \{x\,:\, N(x)\subseteq N(U)\}$.

\begin{proposition}\label{prop:fund_trans_delta}
  The set $\mcZ_A$, with the rank of each set in $\mcZ_A$ as given
  above, is the set of cyclic flats, along with their ranks, of a
  transversal matroid on $A\cup B$, and likewise for $\mcZ_B$.  The
  sets $A$ and $B$ are bases of both of these transversal matroids.
\end{proposition}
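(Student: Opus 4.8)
The plan is to build the matroid $M_A$ directly from its intended cyclic flats: first verify that $\mcZ_A$, with the stated ranks, satisfies the axioms (Z0)--(Z3) of Theorem~\ref{thm:cyclic_flats}, so that $\mcZ_A$ is the set of cyclic flats (with ranks) of a matroid $M_A$ on $A\cup B$; then read off from the rank formula~\eqref{eq:cfrank} that $A$ and $B$ are bases; and finally apply Corollary~\ref{cor:micond} to conclude that $M_A$ is transversal. The statement for $\mcZ_B$ then follows by interchanging the roles of $A$ and $B$.

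Everything hinges on one translation. For $S\subseteq A$ set $\widehat D(S)=S\cup\{b\in B:C_\Delta(b,A)-b\subseteq S\}$. Then $D_\Delta(B')=\widehat D\big(C_\Delta(B',A)\cap A\big)$ and $\widehat D(S)\cap A=S$, so $\mcZ_A=\{\widehat D(S):S\in\mathcal S_0\}$, where $\mathcal S_0=\{C_\Delta(B',A)\cap A:B'\subseteq B\}$ is closed under unions and contains $\varnothing$. Hence $\widehat D$ is an order-isomorphism from the lattice $\mathcal S_0$ onto $\mcZ_A$ carrying unions to joins, which gives (Z0); and (Z1) holds since $\widehat D(\varnothing)=\varnothing$ has rank $0$. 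A strict containment $\widehat D(S_1)\subsetneq\widehat D(S_2)$ forces $S_1\subsetneq S_2$, and choosing $a\in S_2-S_1$ together with $b$ satisfying $a\in C_\Delta(b,A)$ and $C_\Delta(b,A)\cap A\subseteq S_2$ (possible since $S_2\in\mathcal S_0$) exhibits an element of the $B$-part of $\widehat D(S_2)$ not lying in $\widehat D(S_1)$; this yields (Z2). For (Z3) one checks $\widehat D(S_1)\cap\widehat D(S_2)=\widehat D(S_1\cap S_2)$ as sets, while the meet in $\mcZ_A$ is $\widehat D(S_0)$, where $S_0$ is the largest member of $\mathcal S_0$ contained in $S_1\cap S_2$; since any $b$ with $C_\Delta(b,A)-b\subseteq S_1\cap S_2$ already satisfies $C_\Delta(b,A)\cap A\subseteq S_0$, the set $\big(\widehat D(S_1)\cap\widehat D(S_2)\big)-\widehat D(S_0)$ lies entirely in $A$ and has size $|S_1\cap S_2|-|S_0|$, and (Z3) follows (indeed with equality). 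By Theorem~\ref{thm:cyclic_flats} there is a matroid $M_A$ on $A\cup B$ with $\mcZ(M_A)=\mcZ_A$ and the prescribed ranks; a direct check with~\eqref{eq:cfrank} further gives $r_{M_A}(S)=|S|$ and $\cl_{M_A}(S)=\widehat D(S)$ for every $S\subseteq A$.

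Next, $r(M_A)=r$, because each term $|D_\Delta(B')\cap A|+|(A\cup B)-D_\Delta(B')|$ in~\eqref{eq:cfrank} for $A\cup B$ equals $2r-|D_\Delta(B')\cap B|\ge r$, with equality when $B'=B$. Since $D_\Delta(B')\cap A$ and $A-D_\Delta(B')$ partition $A$, every term of~\eqref{eq:cfrank} for the set $A$ equals $|A|=r$, so $A$ is a basis. For $B$ it suffices to show $|D_\Delta(B')\cap B|\le|D_\Delta(B')\cap A|$ for all $B'\subseteq B$; this is where condition~(4) of Definition~\ref{def:ingleton_critical} is used: if $B'\subseteq Y$ and some $x\in X$ lies in no $C_\Delta(b,A)$ with $b\in B'$, then any $b\in Y$ with $C_\Delta(b,A)\cap X=\varnothing$ would be a source of $\Delta$, which is forbidden, and the cases $B'\not\subseteq Y$ and $B'=Y$ are immediate. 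Then each term of~\eqref{eq:cfrank} for $B$ is $\ge r$, with equality when $B'=B$, so $B$ is a basis.

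Finally, to show $M_A$ is transversal, apply Corollary~\ref{cor:micond} with $\mcG=\mcZ_A$. Given an antichain $D_\Delta(B_1'),\dots,D_\Delta(B_n')$ of $\mcZ_A$ with $n\ge 3$, put $S_i=C_\Delta(B_i',A)\cap A$. For each $J\subseteq\{1,\dots,n\}$ the sets $\bigcup_{i\in J}D_\Delta(B_i')$ and $\bigcup_{i\in J}S_i$ have the same closure in $M_A$, hence the same rank $\big|\bigcup_{i\in J}S_i\big|$, and $\bigcap_i D_\Delta(B_i')=\widehat D\big(\bigcap_i S_i\big)$ has rank $\big|\bigcap_i S_i\big|$. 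Thus inequality~\eqref{eq:micond} becomes
\[\Big|\bigcap_{i=1}^{n}S_i\Big|\ \le\ \sum_{\varnothing\ne J\subseteq\{1,\dots,n\}}(-1)^{|J|+1}\Big|\bigcup_{j\in J}S_j\Big|,\]
and a short inclusion--exclusion computation (sorting the points of $A$ by the subset $\{i:x\in S_i\}$) shows the right-hand side is exactly $\big|\bigcap_i S_i\big|$, so the inequality holds. Hence $M_A$ is transversal, completing the proof. I expect the main difficulty to be the lattice bookkeeping for (Z0)--(Z3), especially recognising that $D_\Delta$ factors through $\mathcal S_0$ and identifying the meet in $\mcZ_A$ precisely enough to settle (Z3); the remaining steps are then brief calculations.
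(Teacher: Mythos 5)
Your proposal is correct, but it takes a genuinely different route from the paper. The paper's proof is constructive: it exhibits the bipartite presentation with sets $S_a = \{a\}\cup\{b\in B : (b,a)\notin E(\Delta)\}$, lets $M$ be the resulting transversal matroid, checks that $A$ is a basis and that the sets $C_\Delta(b,A)$ are the fundamental circuits, deduces from general facts about closures in transversal matroids that $\mcZ(M)$ is exactly $\mcZ_A$ with the stated ranks, and finally shows $B$ is a basis by deriving a contradiction from a putative circuit inside $B$ (using that no vertex of $Y$ is a source). You instead build the matroid axiomatically, verifying (Z0)--(Z3) via the reparametrization $D_\Delta(B')=\widehat D(S)$ with $S$ ranging over the union-closed family $\mathcal S_0$, read off that $A$ and $B$ are bases from equation (\ref{eq:cfrank}), and then certify transversality through Corollary \ref{cor:micond}, where the Mason--Ingleton inequalities collapse by inclusion--exclusion to an identity on the traces $S_i\subseteq A$ (so they hold with equality). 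Both arguments hinge on the same structural input at the same place --- the ``no source'' clause of Definition \ref{def:ingleton_critical} is what makes $B$ a basis --- though your phrasing of that step (``if $B'\subseteq Y$ and some $x\in X$ lies in no $C_\Delta(b,A)$\dots'') garbles the quantifiers; the clean statement is that no $y\in Y$ has $C_\Delta(y,A)\cap X=\varnothing$, from which $|D_\Delta(B')\cap B|\le|D_\Delta(B')\cap A|$ follows by the case analysis you indicate. The trade-off: the paper's route is shorter and produces an explicit presentation (the very family of sets Ingleton gave, which matters for interpreting his description of the bases of $M(\Delta)$ as common transversals), whereas your route stays entirely inside the cyclic-flat calculus used throughout the paper and previews the axiom-checking of Propositions \ref{prop:noobs} and \ref{prop:obslattice} and the Mason--Ingleton computations of Theorem \ref{thm:k4like} and Lemma \ref{lem:sboeminor}, at the cost of more lattice bookkeeping and of not exhibiting a presentation.
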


\begin{proof}
  By symmetry, it suffices to treat the assertions about $\mcZ_A$.
  For $a\in A$, let $$S_a = \{a\}\cup \{b\in B \,:\,(b,a)\not\in
  E(\Delta)\}.$$ Let $\Gamma$ be the bipartite graph with bipartition
  $\{A\cup B,\{S_a\,:\,a\in A\}\}$ and with edge set $\{x\, S_a\,:\,
  x\in S_a\}$.  Let $M$ be the transversal matroid on $A\cup B$
  defined by $\Gamma$.  It is easy to see that $A$ is a basis of $M$,
  that $|N_\Gamma(A')| = r(A')$ for all subsets $A'$ of $A$, and that, for
  each $b\in B$, the set $C_\Delta(b,A)$ in equation
  (\ref{eq:CDeltabA}) is the fundamental circuit $C_M(b,A)$.  From
  these conclusions, equation (\ref{form:Acf}), and the observations
  above, it follows that all sets in $\mcZ_A$ are in $\mcZ(M)$.

  To show that each set $Z$ in $\mcZ(M)$ is in $\mcZ_A$ and that
  $Z\cap A$ is a basis of $M|Z$, we start with a circuit $W$ of $M$.
  As noted above, $|N_\Gamma(W)| = r(W)$.  Thus, for $b\in W\cap B$,
  we have $C_M(b,A)\subseteq\cl_M(W)$.  Therefore $\cl_M(W)\cap A$ is
  a basis of $M|\cl_M(W)$.  For any $Z\in \mcZ(M)$, there are circuits
  $W_1,W_2,\ldots,W_t$ of $M$ with $Z = W_1 \cup W_2 \cup \cdots \cup
  W_t$, so, since $\cl_M(W_i)\cap A$ is a basis of $M|\cl_M(W_i)$ for
  each $i$, and $\cl_M(W_i)\subseteq Z$, it follows that $Z\cap A$ is
  a basis of $M|Z$, and, furthermore, $Z=D_\Delta(Z\cap B)$.

  Finally, to show that $B$ is a basis of $M$, assume, instead, that
  $B$ contains a circuit $W$.  We use $X$, $Y$, and $r$ as in
  Definition \ref{def:ingleton_critical}.  Since
  $|N_\Gamma(W)|=r(W)<r$, no element of $B-Y$ is in $W$, so
  $W\subseteq Y$.  Now $N_\Gamma(W) = \{S_a\,:\,a\in(A-X)\cup X'\}$
  for some subset $X'$ of $X$.  Since $W$ is a circuit, $
  |W|-1=|N_\Gamma(W)|$, so $|W|-1 = |A-X|+|X'|$.  Adding $|X-X'|$ to
  both sides gives $|X-X'|+|W|-1 = |A|=r$, so $|X-X'|+|W| = r+1$.
  Since $|X|+|Y| = r+1$, we get $W=Y$ and $X' = \varnothing$, but
  $W=Y$ gives the contradiction $|N_\Gamma(W)| = r$.  (Having
  $X'=\varnothing$ also gives a contradiction: each vertex in $W$
  would be a source of $\Delta$.)  Thus, $B$ is indeed a basis of $M$.
\end{proof}

\subsection{When $\mcZ_\Delta$ is the lattice of cyclic flats of
  $M(\Delta)$}\label{subsec:noobs}

In this section, we show that the matroid $M(\Delta)$ associated to a
critical graph $\Delta$ can have $\mcZ_\Delta$ as its lattice of
cyclic flats if and only if $\Delta$ does not contain a structure that
we call an obstruction.

We start with some examples in which $\mcZ(M(\Delta)) = \mcZ_\Delta$.
The digraph in Figure~\ref{fig:Delta3} is a critical graph,
$\Delta_3$, where $A = \{a,b,c\}$ and $B = \{d,e,f\}$; the associated
matroid $M(\Delta_3)$ is $M(K_4)$.  For a more complex example, let
$\Delta_5$ be the digraph in Figure~\ref{fig:Delta5}, where $A =
\{a_1, \dots, a_5\}$ and $B = \{b_1, \dots, b_5\}$.  Then
$M(\Delta_5)$ has the lattice of cyclic flats shown in
Figure~\ref{fig:ZMDelta_5}.

\begin{figure}[h]
  \centering
  \begin{tikzpicture}[]
    \node[top vert, label=above:$a_1$](a1) at (0, 2) {}; \node[top
    vert, label=above:$a_2$](a2) at (1.25, 2) {}; \node[top vert,
    label=above:$a_3$](a3) at (2.5, 2) {}; \node[top vert,
    label=above:$a_4$](a4) at (3.5, 2) {}; \node[top vert,
    label=above:$a_5$](a5) at (4.5, 2) {};

    \node[bot vert, label=below:$b_1$](b1) at (0, 0) {}; \node[bot
    vert, label=below:$b_2$](b2) at (1.25, 0) {}; \node[bot vert,
    label=below:$b_3$](b3) at (2.5, 0) {}; \node[bot vert,
    label=below:$b_4$](b4) at (3.5, 0) {}; \node[bot vert,
    label=below:$b_5$](b5) at (4.5, 0) {};

    \draw[middlearrow](b2) -- (a1); \draw[middlearrow](b1) -- (a2);
    \draw[middlearrow](b3) -- (a2); \draw[middlearrow](b3) -- (a2);
    \draw[middlearrow](b3) -- (a3);

    \draw[middlearrow](a1) -- (b1); \draw[middlearrow](a1) -- (b3);
    \draw[middlearrow](a2) -- (b2); \draw[middlearrow](a3) -- (b1);
    \draw[middlearrow](a3) -- (b2);
  \end{tikzpicture}
  \caption[The critical graph $\Delta_5$.]
  {The critical graph $\Delta_5$.}
  \label{fig:Delta5}
\end{figure}
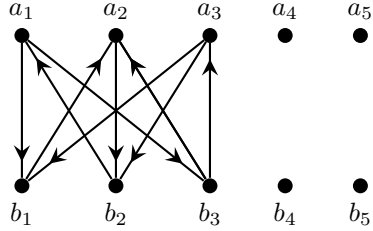

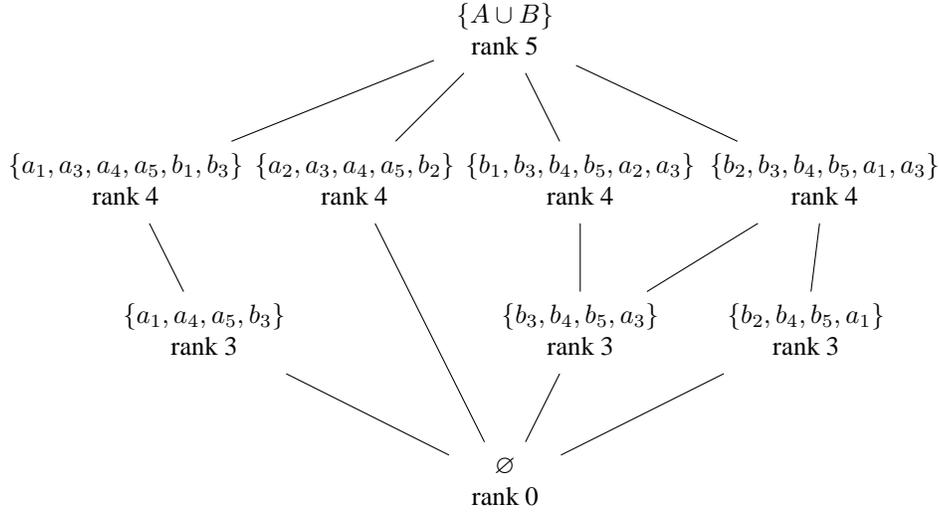
\begin{figure}[h]
\centering
\begin{tikzpicture}
    \node(S)  at (0,8) {
        \begin{tabular}{c}
            $\{A\cup B\}$\\
            rank 5\\
        \end{tabular}
    };

    \node[align=center](a1) at (-5, 6) {
        \begin{tabular}{c}
            $\{a_1, a_3, a_4, a_5, b_1, b_3\}$\\
            rank 4\\
        \end{tabular}
    };
    \node(a2) at (-2, 6) {
        \begin{tabular}{c}
            $\{a_2, a_3, a_4, a_5, b_2\}$\\
            rank 4\\
        \end{tabular}
    };
    \node(a3) at (-4, 4) {
        \begin{tabular}{c}
            $\{a_1, a_4, a_5, b_3\}$\\
            rank 3\\
        \end{tabular}
    };
    \node(b1) at (1, 6) {
        \begin{tabular}{c}
            $\{b_1, b_3, b_4, b_5, a_2, a_3\}$\\
            rank 4\\
        \end{tabular}
    };
    \node(b2) at (4.25, 6) {
        \begin{tabular}{c}
            $\{b_2, b_3, b_4, b_5, a_1, a_3\}$\\
            rank 4\\
        \end{tabular}
    };
    \node(b3) at (4, 4) {
        \begin{tabular}{c}
            $\{b_2, b_4, b_5, a_1\}$\\
            rank 3\\
        \end{tabular}
    };
    \node(b4) at (1, 4) {
        \begin{tabular}{c}
            $\{b_3, b_4, b_5, a_3\}$\\
            rank 3\\
        \end{tabular}
    };

    \node(nullset) at (0, 2) {
        \begin{tabular}{c}
            $\varnothing$\\
            rank 0\\
        \end{tabular}
    };

    \draw(S) -- (a1);
    \draw(S) -- (a2);
    \draw(a1) -- (a3);
    \draw(a2) -- (nullset);
    \draw(a3) -- (nullset);
    \draw(S) -- (b1);
    \draw(S) -- (b2);
    \draw(b1) -- (b4);
    \draw(b2) -- (b3);
    \draw(b2) -- (b4);
    \draw(b3) -- (nullset);
    \draw(b4) -- (nullset);
\end{tikzpicture}
\caption{The lattice of cyclic flats of $M(\Delta_5)$.}
\label{fig:ZMDelta_5}
\end{figure}

It follows from equation (\ref{eq:cfrank}) that if $\mcZ(M(\Delta)) =
\mcZ_\Delta$, then the bases of $M(\Delta)$ are those that are common
to the two transversal matroids in Proposition
\ref{prop:fund_trans_delta}.  In this case, $M(\Delta)$ is definitely
the matroid that Ingleton intended since he said, in
\cite{ingleton_nonbo}, ``For a large class of $\Delta$ it is possible
to define the bases of $M(\Delta)$ as the common transversals of two
families of sets,'' and he then gave the set system $\{S_a\,:\,a\in
A\}$ that we used in the proof of Proposition
\ref{prop:fund_trans_delta}, and its counterpart for $\mcZ_B$.  In
particular, $A$ and $B$ are both bases of $M(\Delta)$.  Also,
$\bed^{M(\Delta)}_{A,B}=\Delta$.  Having $\mcZ(M(\Delta)) =
\mcZ_\Delta$ also implies that $M(\Delta)$ is freest among the
matroids $N$ on $A\cup B$ in which $A$ and $B$ are bases and
$\bed^{N}_{A,B}=\Delta$; to see this, take $A=F$ in
Lemma~\ref{lem:cfwo}.

Ingleton identified the structures in the following definition.

\begin{definition}\label{def:obstruction}
  Given a critical graph $\Delta$, a pair $(K, L)$ is an
  \emph{obstruction} if
  \begin{enumerate}
  \item $\varnothing \subsetneq K \subsetneq X$ and $\varnothing
    \subsetneq L \subsetneq Y$,
  \item $(k,y)\in E(\Delta)$ for every $k\in K$ and $y\in Y-L$, and
  \item $(l,x)\in E(\Delta)$ for every $l\in L$ and $x\in X-K$.
  \end{enumerate}
\end{definition}

The inclusions $C_\Delta(K,B)-K\subseteq L\cup (B-Y)$ and
$C_\Delta(L,A)-L\subseteq K\cup (A-X)$ are equivalent to conditions
(2) and (3), respectively.

Fortunately, as the next four results show, obstructions are rather
well-behaved.  The proof of the following lemma is immediate from the
definition.

\begin{lemma}\label{lemma:obs_reverse}
  Let $\Delta$ be a critical graph, and let $\Delta'$ be the digraph
  obtained by reversing the orientation of every edge of $\Delta$.
  The pair $(K,L)$ is an obstruction of $\Delta$ if and only if $(X-K,
  Y-L)$ is an obstruction of $\Delta'$.
\end{lemma}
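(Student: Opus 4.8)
The plan is to unravel the definitions on both sides and observe that the three conditions defining an obstruction are manifestly symmetric under simultaneously reversing all edges and complementing the witnessing subsets within $X$ and $Y$. First I would recall that when we pass from $\Delta$ to $\Delta'$, the sets $A$, $B$, $X$, $Y$, and the integer $r$ in Definition~\ref{def:ingleton_critical} are unchanged: $\Delta'$ is still a critical graph with the same parameters, since reversing every edge preserves conditions (1)--(3) of that definition verbatim, and condition (4) is preserved because a vertex is a source of $\Delta'$ exactly when it is a sink of $\Delta$ and vice versa. So it makes sense to speak of obstructions of $\Delta'$ relative to the same $X$ and $Y$.

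Next I would verify the equivalence condition by condition. Condition (1) for $(X-K, Y-L)$ as a pair in $\Delta'$ reads $\varnothing \subsetneq X-K \subsetneq X$ and $\varnothing \subsetneq Y-L \subsetneq Y$, which is literally the same as $\varnothing \subsetneq K \subsetneq X$ and $\varnothing \subsetneq L \subsetneq Y$, i.e.\ condition (1) for $(K,L)$ in $\Delta$. For condition (2): $(K,L)$ an obstruction of $\Delta$ requires $(k,y)\in E(\Delta)$ for all $k\in K$, $y\in Y-L$; by condition (2) of Definition~\ref{def:ingleton_critical}, for $k\in K\subseteq X$ and $y\in Y-L\subseteq Y$ exactly one of $(k,y)$ and $(y,k)$ lies in $E(\Delta)$, so $(k,y)\in E(\Delta)$ is equivalent to $(y,k)\notin E(\Delta)$, equivalently $(k,y)\notin E(\Delta')$, equivalently $(y,k)\in E(\Delta')$ (again using that exactly one of the two arcs is present, now in $\Delta'$). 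Ranging over all $k\in K$ and $y\in Y-L$, this last statement is exactly condition (3) for the pair $(X-K, Y-L)$ in $\Delta'$: every $y$ in the ``$L$-role'' set $Y-L$ and every $k$ in the complement of the ``$K$-role'' set, namely $X-(X-K)=K$, has the arc from $y$ to $k$. Symmetrically, condition (3) for $(K,L)$ in $\Delta$ is equivalent to condition (2) for $(X-K,Y-L)$ in $\Delta'$. Combining the three equivalences gives the claim, and the reverse direction follows since $(\Delta')' = \Delta$ and $(X-(X-K), Y-(Y-L)) = (K,L)$.

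There is essentially no obstacle here; the only point requiring a moment's care is bookkeeping the ``exactly one of the two arcs'' toggling, making sure that when $(k,y)\in E(\Delta)$ is converted we land on the arc with the correct orientation in $\Delta'$ and attach it to the correct role (the $K$-set versus its complement, the $L$-set versus its complement). Because the whole argument is a chain of ``if and only if'' statements at the level of individual arcs, followed by universal quantification over $K\times(Y-L)$ and $L\times(X-K)$, the lemma is immediate once these substitutions are written out, which is why the statement asserts the proof is immediate from the definition.
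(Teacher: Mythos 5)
Your proof is correct and is exactly the routine verification that the paper declares ``immediate from the definition'' (the paper gives no written proof). The condition-by-condition check, with conditions (2) and (3) of Definition~\ref{def:obstruction} swapping roles under edge reversal and set complementation, is the intended argument.
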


The next result shows that obstructions form a lattice.

\begin{proposition}\label{prop:noobslattice}
  If $(K_1,L_1)$ and $(K_2,L_2)$ are obstructions of a critical graph
  $\Delta$, then both $(K_1 \cap K_2, L_1 \cap L_2)$ and $(K_1 \cup
  K_2, L_1 \cup L_2)$ are obstructions.
\end{proposition}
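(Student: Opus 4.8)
The plan is to verify the three conditions of Definition~\ref{def:obstruction} directly for the candidate pairs, treating the intersection and the union separately but in parallel. Condition (1) is the only part requiring a bit of care, so I would dispatch it first; conditions (2) and (3) will fall out by a short set-chase using the fact that each $(K_i,L_i)$ satisfies them.

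\emph{Condition (1).} For the intersection, $K_1\cap K_2\subseteq K_1\subsetneq X$ and similarly $L_1\cap L_2\subsetneq Y$, so the strict upper inclusions are free; the nonemptiness $K_1\cap K_2\neq\varnothing$ is the point that needs an argument, and likewise $L_1\cap L_2\neq\varnothing$. For the union, $K_1\cup K_2\supseteq K_1\neq\varnothing$ and $L_1\cup L_2\neq\varnothing$ are free, but now $K_1\cup K_2\subsetneq X$ and $L_1\cup L_2\subsetneq Y$ need proof. I expect these to be linked: I would argue that $K_1\cup K_2=X$ forces $L_1\cap L_2=\varnothing$ (and symmetrically), so that establishing nonemptiness of the intersection and properness of the union can be handled by one combined argument. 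Concretely, suppose $k\in X-(K_1\cup K_2)$ fails to exist, i.e.\ $K_1\cup K_2=X$. Pick any $l\in L_1$ (nonempty). Since $l\in L_1$ and every element of $X-K_1$ lies in $X-K_1$, condition (3) for $(K_1,L_1)$ gives $(l,x)\in E(\Delta)$ for all $x\in X-K_1$; but also for $x\in K_1$ we have no such control from $(K_1,L_1)$. So instead use condition (2): I would take $k\in K_1\cap K_2$ if that set is nonempty and derive information about $Y-(L_1\cap L_2)$. This suggests proving the two statements ``$K_1\cap K_2\neq\varnothing$'' and ``$K_1\cup K_2\neq X$'' are actually the \emph{same} statement in disguise via Lemma~\ref{lemma:obs_reverse}: an obstruction $(K,L)$ of $\Delta$ corresponds to an obstruction $(X-K,Y-L)$ of the reversed digraph $\Delta'$, and intersection/union swap under complementation. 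Thus it suffices to prove the union claim for $\Delta$ and the intersection claim follows by applying that result to $\Delta'$ and complementing. That halves the work.

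\emph{Conditions (2) and (3) for the union.} Let $k\in K_1\cup K_2$ and $y\in Y-(L_1\cup L_2)$. Then $y\notin L_1$ and $y\notin L_2$, i.e.\ $y\in(Y-L_1)\cap(Y-L_2)$. If $k\in K_1$, condition (2) for $(K_1,L_1)$ gives $(k,y)\in E(\Delta)$ since $y\in Y-L_1$; if $k\in K_2$, condition (2) for $(K_2,L_2)$ gives it since $y\in Y-L_2$. Condition (3) for the union is symmetric. \emph{Conditions (2) and (3) for the intersection.} Let $k\in K_1\cap K_2$ and $y\in Y-(L_1\cap L_2)$, so $y\notin L_1$ or $y\notin L_2$; in the first case $y\in Y-L_1$ and $k\in K_1$, so $(k,y)\in E(\Delta)$ by condition (2) for $(K_1,L_1)$; the second case is identical with the roles of $1$ and $2$ swapped. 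Condition (3) is symmetric. (These are precisely the monotonicity patterns one expects: enlarging $K$ and shrinking $Y-L$ compatibly preserves (2), and the two candidate pairs are built to do exactly that.)

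\emph{Main obstacle.} The genuinely nontrivial step is showing $K_1\cup K_2\subsetneq X$ (equivalently, via the reversal lemma, $K_1\cap K_2\neq\varnothing$). Here I would argue by contradiction: assume $K_1\cup K_2=X$. Then every $x\in X$ lies in $K_1$ or $K_2$. Take $l_1\in L_1$ and $l_2\in L_2$ (both nonempty). For $x\in K_1$, condition (3) for $(K_2,L_2)$ does not directly apply unless $x\in X-K_2$; but if $x\in K_1\cap K_2$ we would be done by the reversal claim, so WLOG $K_1\cap K_2=\varnothing$, giving a partition $X=K_1\sqcup K_2$. Now for $l_2\in L_2$ and $x\in X-K_2=K_1$, condition (3) for $(K_2,L_2)$ gives $(l_2,x)\in E(\Delta)$. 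Symmetrically, for $l_1\in L_1$ and $x\in X-K_1=K_2$, $(l_1,x)\in E(\Delta)$. Also $L_1\cup L_2\subseteq Y$: if $L_1\cup L_2=Y$ we could run the complementary argument, so assume $Y-(L_1\cup L_2)$ contains some $y$. Then for $k\in K_1$, condition (2) for $(K_1,L_1)$ gives $(k,y)\in E(\Delta)$ (as $y\in Y-L_1$), and for $k\in K_2$, condition (2) for $(K_2,L_2)$ gives $(k,y)\in E(\Delta)$; so $y$ is a sink among its edges to $X$—but $y\in Y$, and by condition (4) of Definition~\ref{def:ingleton_critical} no element of $X\cup Y$ is a sink of $\Delta$, and every edge at $y$ goes to $X$ or comes from $X$ with both directions now forced one way. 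I would pin down the contradiction by tracking, for a fixed $y\in Y-(L_1\cup L_2)$, that all its incident edges with $X$-endpoints point \emph{into} $y$, making $y$ a sink (or by a parallel count forcing some $x$ or $y$ to be a source), contradicting criticality. Making this sink/source bookkeeping airtight—choosing the right element and confirming it has no outgoing edge at all—is where the real care is needed; everything else is the monotone set-chasing above.
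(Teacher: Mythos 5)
Your verification of conditions (2) and (3) of Definition~\ref{def:obstruction} for both $(K_1\cup K_2,L_1\cup L_2)$ and $(K_1\cap K_2,L_1\cap L_2)$ is correct, and the reduction of the four nontrivial parts of condition (1) to the single claim $K_1\cup K_2\subsetneq X$ (and its $L$-analogue) via Lemma~\ref{lemma:obs_reverse} is sound. But the contradiction you derive from $K_1\cup K_2=X$ has a genuine hole: your sink argument requires an element $y\in Y-(L_1\cup L_2)$, and in the sub-case $K_1\cup K_2=X$ \emph{and} $L_1\cup L_2=Y$ there is no such $y$; the ``complementary argument'' you gesture at would need an element of $X-(K_1\cup K_2)$, which is also empty under your standing assumption. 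Invoking the reversal lemma at this point is circular, since the complemented pair in $\Delta'$ then has both intersections empty --- which is precisely the statement you are planning to \emph{deduce} from the union claim. (Separately, the ``WLOG $K_1\cap K_2=\varnothing$'' step is both unjustified --- knowing $K_1\cap K_2\neq\varnothing$ for a particular pair does not, via Lemma~\ref{lemma:obs_reverse}, say anything about $K_1\cup K_2$ for that same pair --- and unnecessary, since your sink argument never uses it.)

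The missing idea, which is the first line of the paper's proof, is this: if $k\in K_1-K_2$ and $l\in L_2-L_1$, then condition (2) for $(K_1,L_1)$ gives $(k,l)\in E(\Delta)$ while condition (3) for $(K_2,L_2)$ gives $(l,k)\in E(\Delta)$, contradicting the ``exactly one of $(x,y)$ and $(y,x)$'' property in Definition~\ref{def:ingleton_critical}. Hence at least one of $K_1-K_2$ and $L_2-L_1$ is empty, and symmetrically for $K_2-K_1$ and $L_1-L_2$; so the two obstructions are either nested or agree in one coordinate. This immediately rules out your problematic sub-case (if, say, $L_1=L_2$ then $L_1\cup L_2=L_1\subsetneq Y$; if nested or $K_1=K_2$ then $K_1\cup K_2\subsetneq X$), and in fact, once you have this dichotomy, the source/sink argument you set up finishes the shared-coordinate case exactly as in the paper, so your overall architecture can be repaired by inserting this one observation.
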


\begin{proof}
  First observe that if $k \in K_1-K_2$ and $l \in L_2-L_1$, then both
  $(k,l)$ and $(l,k)$ would be edges of $\Delta$, which is
  impossible. Thus, at least one of $K_1-K_2$ and $L_2-L_1$ is empty,
  and likewise for the pair $K_2-K_1$ and $L_1-L_2$.  That is, (i)
  either $K_1\subseteq K_2$ or $L_2\subseteq L_1$ and (ii) either
  $K_2\subseteq K_1$ or $L_1\subseteq L_2$.  If $K_1\ne K_2$ and
  $L_1\ne L_2$, then conclusions (i) and (ii) imply that either (a)
  $K_1\subsetneq K_2$ and $L_1\subsetneq L_2$, or (b) $K_2\subsetneq
  K_1$ and $L_2\subsetneq L_1$; in these cases, the conclusion of the
  proposition is immediate.

  By symmetry, we may now assume that $K_1 = K_2$.  Thus, $(k,y)\in
  E(\Delta)$ for each $k\in K_1$ and $y\in Y-(L_1\cap L_2)$; likewise,
  $(l,x)\in E(\Delta)$ for each $l\in L_1\cup L_2$ and $x\in X-K_1$.
  It cannot be that $L_1 \cap L_2 = \varnothing$, for then each $k\in
  K_1$ would be a source of $\Delta$.  We also cannot have $L_1 \cup
  L_2 = Y$, since then each $x\in X-K_1$ would be a sink of
  $\Delta$. Thus, both $(K_1, L_1 \cap L_2)$ and $(K_1 , L_1 \cup
  L_2)$ are obstructions, as needed.
\end{proof}

Thus, if a critical graph has an obstruction, then it has a minimum
obstruction and a maximum obstruction.  

The next result shows that there are no obstructions if $r < 7$.
However, obstructions can and do occur if $r \ge 7$.
Figure~\ref{fig:Delta7} shows $\Delta_7$, the smallest critical
digraph, up to isomorphism, that has an obstruction.

\begin{proposition}\label{prop:obsge2}
  If $(K,L)$ is an obstruction of a critical graph $\Delta$, then each
  of the sets $K$, $L$, $X-K$, and $Y-L$ has at least two elements.
\end{proposition}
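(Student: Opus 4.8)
The plan is to argue by contradiction, using the defining conditions of an obstruction together with the source/sink-freeness of the critical graph $\Delta$. Suppose $(K,L)$ is an obstruction with, say, $|K| = 1$, so $K = \{k\}$ for some $k \in X$. By condition (2) of Definition~\ref{def:obstruction}, $(k,y) \in E(\Delta)$ for every $y \in Y - L$. Since $k$ is not a source of $\Delta$ (condition (4) of Definition~\ref{def:ingleton_critical}), there must be some vertex $v$ with $(v,k) \in E(\Delta)$; by condition (3) of Definition~\ref{def:ingleton_critical} applied to the critical graph and condition (2) of Definition~\ref{def:ingleton_critical} restricted to the pair $(x,y)$ with $x \in X$, $y \in Y$, the only candidates for such $v$ lie in $Y$, and in fact $v \in L$, since for $y \in Y - L$ we have $(k,y) \in E(\Delta)$ and exactly one of $(k,y)$, $(y,k)$ holds. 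So every in-neighbor of $k$ lies in $L$. Now I would bring in condition (3) of the obstruction: for each $l \in L$ and each $x \in X - K$, $(l,x) \in E(\Delta)$. The idea is to derive a contradiction by showing that some element of $L$ (or some element of $X - K$, etc.) becomes a source or a sink, or that the count $|X| + |Y| = r+1$ is violated.

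The cleanest route I expect is the following. First handle $K$: if $|K| = 1$ with $K = \{k\}$, then since $k$ is not a source, $k$ has an in-neighbor, which must lie in $L$ (by the argument above); symmetrically, since $k$ is not a sink, $k$ has an out-neighbor, and by condition (2) of the obstruction every arc from $k$ to $Y - L$ exists, so this causes no immediate contradiction — hence I need a global count. The key observation is that an obstruction $(K,L)$ forces the bipartite tournament restricted to $K \times (Y-L)$ to be "all arcs $K \to Y-L$" and restricted to $L \times (X-K)$ to be "all arcs $L \to X-K$"; by condition (2) of Definition~\ref{def:ingleton_critical}, this means the arcs within $K \times L$ are unconstrained, but the arcs from $(X-K) \times (Y-L)$ are also unconstrained. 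So the sub-tournament on $X \times Y$ "factors" as a blow-up of the $2\times 2$ pattern on $\{K, X-K\} \times \{L, Y-L\}$ in which the $K$-row points right, the $L$-column points down (into $X-K$). Now: no vertex of $K$ may be a source, so within $K \times L$ some arc points from $L$ to $K$, i.e. $L \ne \varnothing$ is used — but more than that, *every* $k \in K$ must receive an arc from $L$; similarly every $x \in X - K$ receives an arc from $L$ (by condition (3) of the obstruction), so every $x \in X$ receives an arc from $L$. Likewise, running the symmetric argument with Lemma~\ref{lemma:obs_reverse} (pass to $\Delta'$, where $(X-K, Y-L)$ is an obstruction), every $y \in Y$ receives an arc from $X - K$, equivalently sends an arc to... — here I track the reversal carefully.

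To extract "$|K| \ge 2$" specifically: suppose $K = \{k\}$. Then every in-neighbor of $k$ is in $L$, and every vertex of $X - K = X - \{k\}$ has all of $L$ as in-neighbors too. Consider any $l \in L$. The out-neighbors of $l$ include all of $X - K$ and possibly $k$. The in-neighbors of $l$ lie in $K \cup (A - X)$-side... no: $l \in Y \subseteq B$, so its in-neighbors lie in $A$, hence in $X$ (arcs only go within $X \cup Y$), hence in $K \cup (X - K) = \{k\} \cup (X-\{k\})$; but for $x \in X - \{k\}$ we have $(l, x) \in E(\Delta)$, so $x$ is *not* an in-neighbor of $l$; thus the only possible in-neighbor of $l$ is $k$. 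Since $l$ is not a source, $(k, l) \in E(\Delta)$ for every $l \in L$. Combined with condition (2) of the obstruction ($(k,y) \in E(\Delta)$ for all $y \in Y - L$), we get that $k$ dominates all of $Y$, i.e. $(k,y) \in E(\Delta)$ for every $y \in Y$. But then, reading condition (2) of Definition~\ref{def:ingleton_critical} the other way, no $y \in Y$ is an in-neighbor of $k$ from the $Y$-side, and there are no other candidates, so $k$ is a source of $\Delta$ — contradicting condition (4). Hence $|K| \ge 2$. The bound $|L| \ge 2$ follows by the symmetric argument (swap the roles of $X, A$ and $Y, B$, i.e. transpose the bipartition). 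Finally $|X - K| \ge 2$ and $|Y - L| \ge 2$ follow by applying Lemma~\ref{lemma:obs_reverse}: $(X-K, Y-L)$ is an obstruction of $\Delta'$, so the already-proved bounds give $|X-K| \ge 2$ and $|Y-L| \ge 2$.

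The main obstacle, and the place to be most careful, is the bookkeeping about *which side* in-neighbors and out-neighbors live on, and making sure every appeal to "not a source / not a sink" is legitimate — i.e. that the vertex in question actually lies in $X \cup Y$ (condition (4) of Definition~\ref{def:ingleton_critical} only forbids sources/sinks there) and that the constraint "exactly one of $(x,y),(y,x)$" is only available for $x \in X, y \in Y$. I would write out the four cases ($K$, $L$, $X-K$, $Y-L$) but collapse two of them via Lemma~\ref{lemma:obs_reverse} and the other two via the $A \leftrightarrow B$ symmetry, so that only the single argument for $|K| \ge 2$ above needs to be spelled out in full.
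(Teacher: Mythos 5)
Your argument is correct and is essentially the paper's proof run in the dual direction: the paper assumes $Y-L=\{y\}$, uses ``$y$ is not a sink'' to produce an out-neighbor $x\in X-K$, shows $x$ is then a sink, and finally invokes Lemma~\ref{lemma:obs_reverse} to get $|K|,|L|\ge 2$, whereas you assume $K=\{k\}$, use ``$l$ is not a source'' to force $(k,l)\in E(\Delta)$ for all $l\in L$, conclude $k$ is a source, and invoke the same lemma for the other two bounds. The exploratory middle portion could be cut, but the completed argument in your final paragraph is sound and uses the same ingredients (source/sink-freeness plus the reversal lemma).
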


\begin{proof}
  Assume for a contradiction that $Y-L = \{y\}$.  Since $K \subsetneq
  X$ and $y$ is not a sink, there is some $x \in X-K$ with $(y, x) \in
  E(\Delta)$.  This implies that $x$ is then a sink, contrary to
  property (4) of $\Delta$.  Thus $|Y-L| \ge 2$.  By symmetry $|X-K|
  \ge 2$.  Now Lemma~\ref{lemma:obs_reverse} implies that $|K|, |L|
  \ge 2$.
\end{proof}

\begin{figure}[h]
  \centering
  \begin{tikzpicture}[]
    \node[top vert, label=above:$a_1$](a1) at (0, 2) {}; \node[top
    vert, label=above:$a_2$](a2) at (1.25, 2) {}; \node[top vert,
    label=above:$a_3$](a3) at (2.5, 2) {}; \node[top vert,
    label=above:$a_4$](a4) at (3.75, 2) {}; \node[top vert,
    label=above:$a_5$](a5) at (4.75, 2) {}; \node[top vert,
    label=above:$a_6$](a6) at (5.75, 2) {}; \node[top vert,
    label=above:$a_7$](a7) at (6.75, 2) {};

    \node[bot vert, label=below:$b_1$](b1) at (0, 0) {}; \node[bot
    vert, label=below:$b_2$](b2) at (1.25, 0) {}; \node[bot vert,
    label=below:$b_3$](b3) at (2.5, 0) {}; \node[bot vert,
    label=below:$b_4$](b4) at (3.75, 0) {}; \node[bot vert,
    label=below:$b_5$](b5) at (4.75, 0) {}; \node[bot vert,
    label=below:$b_6$](b6) at (5.75, 0) {}; \node[bot vert,
    label=below:$b_7$](b7) at (6.75, 0) {};

    \draw[middlearrow_f](a1) -- (b1); \draw[middlearrow_f](a2) --
    (b2);

    \draw[middlearrow_f](b1) -- (a2); \draw[middlearrow_f](b2) -- (a1);

    \draw[middlearrow_f](a3) -- (b3); \draw[middlearrow_f](a4) --
    (b4);

    \draw[middlearrow_f](b3) -- (a4); \draw[middlearrow_f](b4) -- (a3);

    \draw[middlearrow_g](a3) -- (b1); \draw[middlearrow_g](a3) --
    (b2); \draw[middlearrow_g](a4) -- (b1); \draw[middlearrow_g](a4)
    -- (b2);

    \draw[middlearrow_g](b3) -- (a1); \draw[middlearrow_g](b3) -- (a2);
    \draw[middlearrow_g](b4) -- (a1); \draw[middlearrow_g](b4) -- (a2);
  \end{tikzpicture}
  \caption[The critical graph, $\Delta_7$.]{The pair $(\{a_3, a_4\},
    \{b_3, b_4\})$ is an obstruction in this critical graph,
    $\Delta_7$.  The edges with gray arrows show that conditions (2)
    and (3) in Definition \ref{def:obstruction} hold.}
  \label{fig:Delta7}
\end{figure}
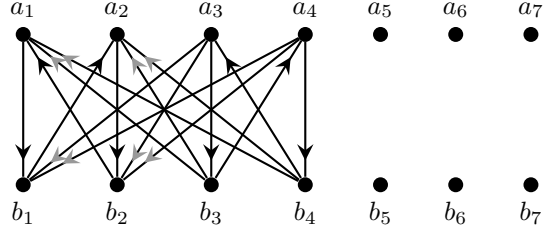

\begin{lemma}\label{lemma:mineq}
  If $(K,L)$ is the minimum obstruction of $\Delta$, then
  $C_\Delta(L,A)-L= K\cup (A-X)$ and $C_\Delta(K,B)-K= L\cup (B-Y)$.
\end{lemma}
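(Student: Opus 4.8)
The plan is to prove the two displayed equalities by establishing both inclusions in each. The ``$\subseteq$'' directions are already in hand: the remark following Definition~\ref{def:obstruction} records that $C_\Delta(L,A)-L\subseteq K\cup(A-X)$ and $C_\Delta(K,B)-K\subseteq L\cup(B-Y)$ are merely restatements of conditions (3) and (2) of an obstruction, and these hold for any obstruction, minimal or not. So the task reduces to the reverse inclusions, and this is where minimality of $(K,L)$ must enter. By the symmetry between the roles of $(A,X,K)$ and $(B,Y,L)$ (interchanging the two sides of the bipartition of $\Delta$ turns the obstruction $(K,L)$ into $(L,K)$ and swaps the two equalities), it suffices to prove $K\cup(A-X)\subseteq C_\Delta(L,A)-L$.

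First I would unwind the definition to $C_\Delta(L,A)-L=\{a\in A:(l,a)\notin E(\Delta)\text{ for some }l\in L\}$. The inclusion $A-X\subseteq C_\Delta(L,A)-L$ is immediate from property~(3) in Definition~\ref{def:ingleton_critical}: an element of $A-X$ lies outside $X\cup Y$, hence is the head of no edge of $\Delta$, so it lies in $C_\Delta(l,A)-l$ for every $l\in L$, and $L\ne\varnothing$.

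The substantive point is $K\subseteq C_\Delta(L,A)-L$, i.e., that for each $k\in K$ there is some $l\in L$ with $(l,k)\notin E(\Delta)$. I would argue by contradiction. Suppose $k_0\in K$ satisfies $(l,k_0)\in E(\Delta)$ for every $l\in L$; I claim $(K-\{k_0\},L)$ is again an obstruction. Condition~(1) holds because $|K|\ge 2$ by Proposition~\ref{prop:obsge2}, so $K-\{k_0\}\ne\varnothing$, while $K-\{k_0\}\subsetneq X$ and the part about $L$ is unchanged. Condition~(2), that $(k,y)\in E(\Delta)$ for $k\in K-\{k_0\}$ and $y\in Y-L$, is inherited from $(K,L)$. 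Condition~(3) requires $(l,x)\in E(\Delta)$ for $l\in L$ and $x\in X-(K-\{k_0\})=(X-K)\cup\{k_0\}$; for $x\in X-K$ this is inherited, and for $x=k_0$ it is exactly the standing assumption. Thus $(K-\{k_0\},L)$ is an obstruction lying strictly below $(K,L)$ in the lattice of obstructions of Proposition~\ref{prop:noobslattice}, contradicting minimality. Hence every $k\in K$ lies in $C_\Delta(L,A)-L$, and combining with the previous paragraph gives $C_\Delta(L,A)-L=K\cup(A-X)$; the interchanged argument (deleting a hypothetical bad $l_0$ from $L$, using $|L|\ge 2$) yields $C_\Delta(K,B)-K=L\cup(B-Y)$.

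The only real obstacle is the bookkeeping in the contradiction step: one must check that shrinking $K$ (or $L$) by one element keeps it nonempty and properly contained in $X$ (or $Y$), which is precisely what Proposition~\ref{prop:obsge2} provides; everything else is a routine application of the critical-graph axioms.
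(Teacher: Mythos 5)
Your proof is correct and follows essentially the same route as the paper's: the forward inclusions come from the remark after Definition~\ref{def:obstruction}, and the reverse inclusion for $K$ is obtained by showing that a ``bad'' element $k_0$ could be deleted from $K$ to produce a strictly smaller obstruction, contradicting minimality. You simply make explicit a few details the paper leaves implicit (the $A-X$ part of the inclusion and the appeal to Proposition~\ref{prop:obsge2} to keep $K-\{k_0\}$ nonempty).
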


\begin{proof}
  For $k \in K$, if $C_\Delta(L,A)-L\subseteq (K-k)\cup (A-X)$, then
  $(K-k,L)$ would also be an obstruction, contrary to $(K,L)$ being
  the minimum.  The second equality follows by symmetry.
\end{proof}

We now treat a key result.

\begin{proposition}\label{prop:noobs}
  Let $\Delta$ be a critical graph.  The collection $\mcZ_\Delta$,
  with the ranks given before Proposition \ref{prop:fund_trans_delta},
  satisfies conditions \emph{(Z0)--(Z3)} in Theorem
  \ref{thm:cyclic_flats} (and so defines a matroid) if and only if
  $\Delta$ has no obstructions.
\end{proposition}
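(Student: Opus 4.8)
The plan is to verify conditions (Z0)--(Z3) of Theorem~\ref{thm:cyclic_flats} for $\mcZ_\Delta$, leaning on Proposition~\ref{prop:fund_trans_delta}: since $\mcZ_A$ and $\mcZ_B$, each with the ranks specified before that proposition, are the lattices of cyclic flats of transversal matroids, conditions (Z0)--(Z3) already hold whenever all the sets involved lie in $\mcZ_A$, or all lie in $\mcZ_B$, and (Z1) is immediate. So only \emph{mixed} comparisons -- a set of $\mcZ_A$ against a set of $\mcZ_B$ -- remain. First I would record the structural facts coming from equation~(\ref{eq:CDeltabA}) and formula~(\ref{form:Acf}) and their $\mcZ_B$-counterparts. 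Because $C_\Delta(b,A)=A\cup\{b\}$ for $b\in B-Y$, a member of $\mcZ_A$ that meets $B-Y$ or contains $A$ must be $A\cup B$; hence each $P\in\mcZ_A$ with $P\notin\{\varnothing,A\cup B\}$ satisfies $A-X\subseteq P$, $P\cap B\subseteq Y$, $P\cap A\subsetneq A$, and, with $B'=P\cap B$, one has $P\cap X=\bigcup_{y\in B'}\{x\in X:(x,y)\in E(\Delta)\}$; dually for $\mcZ_B$. Since $|X|,|Y|\ge 2$ for a critical graph, $A-X$ and $B-Y$ are nonempty, so a set of $\mcZ_A$ and a set of $\mcZ_B$, neither being $\varnothing$ or $A\cup B$, are always incomparable, and a member of $\mcZ_A$ that contains, or is contained in, such a set of $\mcZ_B$ must be $A\cup B$ or $\varnothing$ respectively. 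From this one checks that $\mcZ_\Delta$ is a lattice and that (Z2) holds, regardless of obstructions: the $\mcZ_A$- and $\mcZ_B$-internal joins and meets are unchanged in $\mcZ_\Delta$, while a mixed pair either involves $\varnothing$ or $A\cup B$ or is incomparable with join $A\cup B$ and meet $\varnothing$. So the whole question is whether (Z3) holds.

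Assume $\Delta$ has no obstruction. I would check (Z3) for an incomparable mixed pair $P\in\mcZ_A$, $Q\in\mcZ_B$ with $P,Q\notin\{\varnothing,A\cup B\}$; here $P\vee Q=A\cup B$, $P\wedge Q=\varnothing$, $r(P)=|P\cap A|$, $r(Q)=|Q\cap B|$. Writing $A'=Q\cap A\subseteq X$ and $B'=P\cap B\subseteq Y$, a short computation using $A-X\subseteq P$, $B-Y\subseteq Q$, and $|X|+|Y|=r+1$ gives
\begin{equation*}
  r(P)+r(Q)-|P\cap Q|=(r-1)+\big|(P\cap X)\setminus A'\big|+\big|(Q\cap Y)\setminus B'\big|,
\end{equation*}
so (Z3), which here reads $r(P)+r(Q)-|P\cap Q|\ge r$, holds unless $P\cap X\subseteq A'$ and $Q\cap Y\subseteq B'$. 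In that exceptional case the explicit descriptions of $P\cap X$ and $Q\cap Y$ from the structural facts rewrite these two inclusions as exactly conditions~(2) and~(3) of Definition~\ref{def:obstruction} for the pair $(A',B')$, while $A'\subsetneq X$ and $B'\subsetneq Y$ hold because $A'=X$ or $B'=Y$ would force a sink in $\Delta$ or force $P=A\cup B$; thus $(A',B')$ is an obstruction -- a contradiction. Hence (Z3) holds for all mixed pairs, and $\mcZ_\Delta$ satisfies (Z0)--(Z3).

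For the converse, suppose $\Delta$ has an obstruction. By Proposition~\ref{prop:noobslattice} there is a minimum one, $(K,L)$, and Lemma~\ref{lemma:mineq} gives $C_\Delta(L,A)-L=K\cup(A-X)$ and $C_\Delta(K,B)-K=L\cup(B-Y)$. Setting $P=D_\Delta(L)\in\mcZ_A$ and $Q=D_\Delta(K)\in\mcZ_B$, one computes (using $|X|+|Y|=r+1$) that $r(P)=|K|+|Y|-1$ and $r(Q)=|L|+|X|-1$; that $P$ and $Q$ are incomparable, since $A-X\subseteq P$ but $A-X$ is disjoint from $Q$, and symmetrically for $B-Y$; that $P\cap Q=K\cup L$; and, from the structural facts of the first paragraph, that $A\cup B$ is the only member of $\mcZ_\Delta$ containing $P\cup Q$ and $\varnothing$ the only one contained in $K\cup L$. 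Therefore $P\vee Q=A\cup B$ and $P\wedge Q=\varnothing$, and (Z3) for the incomparable pair $P,Q$ would require $(|K|+|Y|-1)+(|L|+|X|-1)\ge r+|K\cup L|$, i.e.\ $|X|+|Y|-2\ge r$, i.e.\ $r-1\ge r$, which is false. Hence $\mcZ_\Delta$ fails (Z3).

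The part I expect to be delicate is the computation for mixed incomparable pairs together with the claim that a failure of (Z3) forces an honest obstruction rather than some weaker configuration -- in particular, ruling out the borderline cases $A'=X$ and $B'=Y$ -- and the verification that adjoining $\mcZ_B$ to $\mcZ_A$ does not disturb the lattice operations already present in each. Each of these reduces to the single structural input that $C_\Delta(b,A)=A\cup\{b\}$ for $b\in B-Y$, together with property~(4) of a critical graph; with those in hand the argument is bookkeeping.
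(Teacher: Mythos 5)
Your proposal is correct and follows essentially the same route as the paper: reduce everything to condition (Z3) for mixed pairs $P\in\mcZ_A$, $Q\in\mcZ_B$, show that a failure of the resulting inequality produces an obstruction, and conversely use the minimum obstruction together with Lemma~\ref{lemma:mineq} to exhibit a pair violating (Z3). The only (harmless) divergence is that from the failure condition you extract the obstruction $(Q\cap X,\,P\cap Y)$ where the paper extracts $(P\cap X,\,Q\cap Y)$; both pairs satisfy Definition~\ref{def:obstruction}, and your explicit check that the pair is proper (which the paper leaves implicit) is a small point in your favor.
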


\begin{proof}
  From equation (\ref{form:Acf}), each $I \in \mcZ_A - \{A\cup B,
  \varnothing\}$ is a proper superset of $A-X$ that is disjoint from
  $B-Y$, while from equation (\ref{form:Bcf}), each $J \in \mcZ_B -
  \{A\cup B, \varnothing\}$ is a proper superset of $B-Y$ that is
  disjoint from $A-X$; thus, $I \meet J = \varnothing$ and $I \join J
  = A \cup B$.  Also, if $I,J \in \mcZ_A$, then $I \join J$ and $I
  \meet J$ are as in $\mcZ_A$, and likewise if $I,J \in \mcZ_B$.
  Thus, condition (Z0) holds.

  Each of the other conditions follows from its counterpart in
  $\mcZ_A$ or $\mcZ_B$ with one exception: we must check whether
  condition (Z3) holds for all $I, J$ with $I \in \mcZ_A - \{A\cup B,
  \varnothing\}$ and $J \in \mcZ_B - \{A\cup B, \varnothing\}$.
  Condition (Z3) for such an $I$ and $J$ is equivalent to
  \begin{equation}\label{eq:mdeltaZ3}
    r + |I\cap J| \le |I\cap A| + |J\cap B|,
  \end{equation}
  which, since $|A-X|+|B-Y|=r-1$, is equivalent to $1 + |I\cap J| \le
  |I\cap X| + |J\cap Y|$.  Assume that this inequality fails, that
  is, $$|I\cap J| \geq |I\cap X| + |J\cap Y|.$$ Since $I\cap J$ is the
  disjoint union of $I\cap J\cap X$ and $I\cap J\cap Y$, the last
  inequality gives both $I\cap J \cap X = I\cap X$ and $I\cap J\cap Y
  = J\cap Y$, from which we get $I\cap X \subseteq J\cap X$ and $J\cap
  Y \subseteq I\cap Y$.  Equation (\ref{form:Acfimp}) and the
  inclusion $J\cap Y \subseteq I\cap Y$ give $$C_\Delta(J\cap
  Y,A)-(J\cap Y) \subseteq I\cap A = (I\cap X)\cup (A-X).$$ Likewise,
  $C_\Delta(I\cap X,B)-(I\cap X) \subseteq (J\cap Y)\cup (B-Y)$
  follows from $I\cap X \subseteq J\cap X$, so $(I\cap X, J\cap Y)$ is
  an obstruction.  Thus, if condition (Z3) fails, then $\Delta$ has an
  obstruction.

  Now assume that $\Delta$ has an obstruction.  Let $(K, L)$ be the
  minimum obstruction.  Set $I=D_\Delta(L)$, so $I \in \mcZ_A$, and
  set $J=D_\Delta(K),$ so $J\in \mcZ_B$.  By Lemma \ref{lemma:mineq},
  $$r(I) + r(J) = |A-X|+|K|+|B-Y|+|L| = r-1+|K|+|L|.$$
  On the other hand, $I\meet J = \varnothing$ and $K\cup L\subseteq
  I\cap J$, so
  $$r(I \join J) + r(I \meet J) + |(I \cap J) - (I \meet J)|
  \geq r + |K| + |L|.$$ Thus condition (Z3) of
  Theorem~\ref{thm:cyclic_flats} fails.
\end{proof}

Thus, when $\Delta$ has no obstructions, we take $M(\Delta)$ to be the
matroid with lattice of cyclic flats equal to $\mcZ_\Delta$.  We next
reformulate a conjecture that Ingleton made in \cite{ingleton_nonbo}.
We prove a special case in Theorem~\ref{thm:k4like}.

\begin{conjecture}\label{conj:noobs}
  If $\Delta$ is a critical graph with no obstructions, then
  $M(\Delta)$ is an excluded minor for $\mathcal{BO}$.
\end{conjecture}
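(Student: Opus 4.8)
The plan is to prove that for a critical graph $\Delta$ with no obstructions, $M(\Delta)$ is an excluded minor for $\mathcal{BO}$, by showing first that $M(\Delta)\notin\mathcal{BO}$ and then that every proper minor is in $\mathcal{BO}$. The first half is essentially free: by construction $A$ and $B$ are bases of $M(\Delta)$ with $\bed^{M(\Delta)}_{A,B}=\Delta$, and since $\Delta$ contains an orientation of $K_{s,t}$ with $s+t=r+1$ and $s,t\ge2$, Lemma~\ref{lem:nbo} shows $A$ and $B$ have no exchange-ordering, so $M(\Delta)\notin\mathcal{BO}$. For the minor-minimality, I would invoke Lemma~\ref{lem:kbodorc}: since $|E(M(\Delta))|=2r$ and $r(M(\Delta))=r$, it suffices to show that every single-element contraction $M(\Delta)\contract x$ is base-orderable (equivalently, every deletion, by duality). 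So the real work is to understand $M(\Delta)\contract x$ for each $x\in A\cup B$; by the symmetry between $\mcZ_A$ and $\mcZ_B$ it is enough to handle $x\in B$, and within that, to treat separately the cases $x\in B-Y$ and $x\in Y$.

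The key idea for the minors is to show that each $M(\Delta)\contract x$ is a gammoid, hence base-orderable (gammoids lie in $\mathcal{SBO}\subseteq\mathcal{BO}$). Proposition~\ref{prop:fund_trans_delta} exhibits $M(\Delta)$ as the intersection — in the weak order — of two transversal matroids $M_A$ (with $\mcZ(M_A)=\mcZ_A$) and $M_B$ (with $\mcZ(M_B)=\mcZ_B$), and in fact $M(\Delta)$ is the freest matroid whose cyclic flats include both families. I would aim to show that $M(\Delta)\contract x$ actually \emph{equals} one of $M_A\contract x$ or $M_B\contract x$ (or a transversal matroid closely related to one of them), using Lemma~\ref{lem:cfminor} to track how the cyclic flats behave under contraction and equation~(\ref{eq:cfrank}) to compare ranks. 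The heuristic is that contracting $x$ destroys enough of the ``$\Delta$-structure'' that the remaining constraints are exactly those coming from a single side, $\mcZ_A$ or $\mcZ_B$; concretely, when $x\in Y$, contracting $x$ should collapse the fundamental circuit $C_\Delta(x,B)$ and leave $M(\Delta)\contract x$ agreeing with $M_A\contract x$, and dually for the delete side. Since transversal matroids are closed under neither deletion nor contraction in general, I would instead show directly that $M(\Delta)\contract x$ is a gammoid by exhibiting it as a minor of one of the transversal matroids, or by verifying the Mason–Ingleton condition (Corollary~\ref{cor:micond}) on an explicitly described family $\mcG\supseteq\mcZ(M(\Delta)\contract x)$ built from $\mcZ_A$ or $\mcZ_B$.

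The main obstacle, I expect, is the bookkeeping in identifying $\mcZ(M(\Delta)\contract x)$ precisely and verifying that in each case the resulting matroid is transversal (or a gammoid). The subtlety is that $M(\Delta)$ is \emph{not} itself transversal — condition~(\ref{eq:micond}) can fail for $3$-element antichains straddling $\mcZ_A$ and $\mcZ_B$, and indeed the whole point of the obstruction condition is exactly where $\mcZ_\Delta$ stops being the cyclic-flat lattice of a matroid — so the argument must genuinely use that contraction removes the ``bad'' antichains. I would carefully check that for each $x$, every $3$-element antichain of cyclic flats of $M(\Delta)\contract x$ lifts (via Lemma~\ref{lem:cfminor}) to sets in $\mcZ_A\cup\mcZ_B$ for which (\ref{eq:micond}) holds, ruling out the mixed antichains by showing that after contracting $x$ no cyclic flat genuinely ``mixes'' the two sides — i.e., every cyclic flat of the minor is contained in the closure of a set from a single $\mcZ_A$- or $\mcZ_B$-chain. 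The no-obstruction hypothesis is what guarantees this, via the equivalence established in Proposition~\ref{prop:noobs}: an obstruction is precisely a witness to a mixed pair $(I\cap X, J\cap Y)$ violating (Z3), so its absence forces the mixed configurations to disappear under contraction. Once transversality (or gammoid-ness) of each $M(\Delta)\contract x$ is established, Lemma~\ref{lem:kbodorc} closes the argument.
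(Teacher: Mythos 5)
This statement is presented in the paper as a \emph{conjecture}, not a theorem: the authors prove it only for the special subfamily $M_\alpha$ of Theorem~\ref{thm:k4like} (critical graphs whose arcs fall into four complete, uniformly oriented blocks) and otherwise offer computational verification for $r\le 9$. Your first two steps are sound and coincide with what the paper does in that special case: $M(\Delta)\notin\mathcal{BO}$ follows from $\bed^{M(\Delta)}_{A,B}=\Delta$ together with Lemma~\ref{lem:nbo}, and Lemma~\ref{lem:kbodorc} correctly reduces excluded-minor-hood to showing that every single-element contraction $M(\Delta)\contract x$ is base-orderable. But the third step---which is the entire mathematical content---is not carried out. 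You write that you ``would aim to show'' that $M(\Delta)\contract x$ equals $M_A\contract x$ or $M_B\contract x$, or is a gammoid, and that the no-obstruction hypothesis ``forces the mixed configurations to disappear under contraction,'' but no argument is given for any of these claims, and they are precisely where the open difficulty lives.

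Concretely, two things go wrong. First, the claim that contracting $x$ leaves only ``one side'' of the structure is implausible: by Lemma~\ref{lem:cfminor} the cyclic flats of $M(\Delta)\contract x$ all descend from sets in $\mcZ_\Delta$, and for a generic $x\in X\cup Y$ nontrivial flats from both $\mcZ_A$ and $\mcZ_B$ survive the contraction, so mixed antichains of size $\ge 3$ persist and must be checked against inequality~(\ref{eq:micond}); Proposition~\ref{prop:noobs} only controls the pairwise condition (Z3) inside $\mcZ_\Delta$ itself and says nothing about Mason--Ingleton for triples in a contraction. Second, your route asks for more than the conjecture needs and more than the evidence supports: the computations certify only that the proper minors lie in $\mathcal{SBO}$, not that they are gammoids or transversal, so it may well happen for some obstruction-free $\Delta$ that a contraction is base-orderable without being a gammoid, in which case your strategy fails even though the conjecture is true. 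In the one case the paper does settle (Theorem~\ref{thm:k4like}), the transversality verification leans on the very rigid cyclic-flat structure of $M_\alpha$ (exactly four proper nonempty cyclic flats, each element in at most two of them); no analogue of that bookkeeping is supplied, or currently known, for a general obstruction-free critical graph.
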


\subsection{A candidate for $M(\Delta)$ when $\Delta$ has
  obstructions}\label{subsec:wobs}
The only thing that Ingleton said in \cite{ingleton_nonbo} about
$M(\Delta)$ when $\Delta$ has an obstruction is that ``the set of
bases of $M(\Delta)$ has to be a suitably chosen proper subset of the
set of common transversals'' of $\{S_a\,:\,a\in A\}$ and its
counterpart for $\mcZ_B$.  Thus, we cannot be sure that what we
present below is what he had in mind.  As we note below, our candidate
for $M(\Delta)$ has a property that Ingleton asserted for the matroids
he had in mind.  Also, the computational evidence cited in Section
\ref{subsec:data} lines up with Ingleton's conjecture.  The material
in this section is not used in the rest of the paper.
  
Proposition~\ref{prop:noobslattice} justifies the following notation.
When $\Delta$ has an obstruction, let $P$ be the set $K_0 \cup L_0$
where $(K_0, L_0)$ is the minimum obstruction of $\Delta$, and let $Q$
be the set $K_1 \cup L_1 \cup (A-X) \cup (B-Y)$ where $(K_1, L_1)$ is
the maximum obstruction of $\Delta$.

\begin{proposition}\label{prop:obslattice}
  Let $\Delta$ be a critical graph having an obstruction.  Set $r(P) =
  |P|-1$ and $r(Q)=r-1$.  Each of the following collections of sets,
  with the ranks defined above, satisfies the conditions of
  Theorem~\ref{thm:cyclic_flats} and so defines a matroid:
  $$\mcZ_\Delta^P=\mcZ_\Delta \cup \{P\}, \qquad
  \mcZ_\Delta^Q=\mcZ_\Delta \cup \{Q\}, \qquad\text{and}\qquad
  \mcZ_\Delta^{P,Q}=\mcZ_\Delta \cup \{P, Q\}.$$
\end{proposition}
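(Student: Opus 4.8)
The task is to verify conditions (Z0)--(Z3) of Theorem~\ref{thm:cyclic_flats} for each of $\mcZ_\Delta^P$, $\mcZ_\Delta^Q$, and $\mcZ_\Delta^{P,Q}$, given that $\mcZ_\Delta$ itself satisfies these conditions except for a single failure of (Z3) witnessed by the pair $(I_0,J_0)=(D_\Delta(L_0), D_\Delta(K_0))$ (from the proof of Proposition~\ref{prop:noobs}). I would organize the proof around the three collections, but derive the bulk of the work once and reuse it. The first thing I would do is record basic containment facts: $P = K_0 \cup L_0 \subseteq D_\Delta(L_0) \cap D_\Delta(K_0)$ (indeed $P \subseteq I \cap J$ for every obstruction-derived pair, so in particular for the minimum one), while $Q = K_1 \cup L_1 \cup (A-X)\cup(B-Y)$ contains every set in $\mcZ_A - \{\varnothing, A\cup B\}$ that lies inside $\mcZ_A$ via some $L' \subseteq L_1$, and dually for $\mcZ_B$; more to the point, $Q$ is the union $D_\Delta(K_1) \cup D_\Delta(L_1)$ and has rank $r-1$ by the same computation as in Lemma~\ref{lemma:mineq} together with Proposition~\ref{prop:obsge2}. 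I would also note $P \subsetneq Q$ (using $K_0 \subseteq K_1$, $L_0\subseteq L_1$ from Proposition~\ref{prop:noobslattice} and $P\cap(A-X)=P\cap(B-Y)=\varnothing$ while $Q\supseteq (A-X)\cup(B-Y)$), and that $P$ is incomparable with most elements of $\mcZ_\Delta$ whereas its meet/join with them is controlled.

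**Checking (Z0).** I would show $\mcZ_\Delta^P$ is a lattice by exhibiting, for each $W \in \mcZ_\Delta$, the meet and join of $W$ with $P$ inside $\mcZ_\Delta^P$. The key case is $W = I \in \mcZ_A-\{\varnothing,A\cup B\}$: here $I \meet W' = \varnothing$ and $I\join W' = A\cup B$ for $W'\in\mcZ_B$, and one expects $P \meet I = \varnothing$ (since $K_0 \subseteq X$ and a flat $I\in\mcZ_A$ with $I \cap X \not\supseteq K_0$ meets $P$ in a proper subset that need not be in the lattice --- this is the delicate point) or $P \subseteq I$. The clean statement to aim for is: for $I = D_\Delta(L')\in\mcZ_A$, either $L_0 \subseteq L'$, in which case $P \subseteq I$ (so $P \meet I = P$, $P\join I = I$), or $L_0 \not\subseteq L'$, in which case $P \meet I = \varnothing$ (using $P\cap(A-X)=\varnothing$ and that $P \cap A = K_0$ is ``all or nothing'' against $I\cap A$ because $(K_0,L_0)$ is an obstruction) and $P \join I = A\cup B$. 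A symmetric analysis handles $\mcZ_B$, and comparisons with $\varnothing$, $A\cup B$ are trivial. For $Q$: since $Q$ contains $(A-X)\cup(B-Y)$ and has rank $r-1$, it behaves like a hyperplane; one checks $Q \join W = A\cup B$ unless $W\subseteq Q$, and $Q\meet W$ is the largest element of $\mcZ_\Delta$ inside $Q\cap W$, which exists because $\mcZ_A\cap 2^Q$ and $\mcZ_B\cap 2^Q$ are sublattices. For $\mcZ_\Delta^{P,Q}$ one additionally needs $P\meet Q = P$ and $P\join Q = Q$, immediate from $P \subsetneq Q$.

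**Checking (Z1)--(Z3).** Condition (Z1) is unaffected since the least element is still $\varnothing$. For (Z2), the new covering-type inequalities $0 < r(P) - r(X') < |P - X'|$ for $X'\subsetneq P$ in the lattice and $0 < r(W) - r(P) < |W-P|$ for $P \subsetneq W$ follow from $r(P)=|P|-1$ together with $r$ on $\mcZ_\Delta$ being ``rank $=$ size of $A$-part (or $B$-part),'' plus the containments above; similarly for $Q$ using $r(Q)=r-1$ and Proposition~\ref{prop:obsge2} (which guarantees the gaps are strict). Condition (Z3) is the crux and the main obstacle. One must re-examine every incomparable pair involving $P$ or $Q$. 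The pairs $(P, W)$ with $W \in \mcZ_A \cup \mcZ_B$ and $P\meet W = \varnothing$ reduce (as in the proof of Proposition~\ref{prop:noobs}, via $|A-X|+|B-Y|=r-1$) to $1 + |P\cap W| \le |P\cap X \text{ or }Y| + |W\cap \text{the other}|$, and here the point is that because $(K_0,L_0)$ is the \emph{minimum} obstruction, no proper sub-obstruction exists, so the equality case that produced the failure in Proposition~\ref{prop:noobs} cannot recur with $P$ in place of one of $I_0, J_0$; I would argue that any violation would force $(K_0\cap(W\cap X), L_0\cap(W\cap Y))$ or a similar shrinking to be an obstruction strictly inside $(K_0,L_0)$, a contradiction. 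The pair $(P, Q)$ is comparable, so contributes nothing. The pairs $(Q, W)$ are handled by the hyperplane-like behavior: $r(Q) + r(W) = (r-1) + r(W)$, while $r(Q\join W) + r(Q\meet W) + |(Q\cap W)-(Q\meet W)| \le r + r(Q\meet W) + (|Q\cap W| - r(Q\meet W)) = r + |Q\cap W|$, and $|Q\cap W| \le r(W) - 1$ because $Q\meet W \subsetneq W$ has rank $\le r(W)-1$ and contains... --- this needs the bound $|Q\cap W| \le r(W)+|A-X|+|B-Y| - \text{something}$; the honest version is to observe $Q\cap W$ is cyclic of rank $< r(W)$ inside $W$, so $|Q\cap W| < |W|$ combined with counting in the $A$/$B$-parts gives the inequality. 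Finally, the crucial sanity check: $(I_0, J_0)$, which violated (Z3) in $\mcZ_\Delta$, no longer does so in $\mcZ_\Delta^P$ because now $I_0 \meet J_0 = P$ (not $\varnothing$), and the inequality $r(I_0) + r(J_0) \ge r(I_0\join J_0) + r(P) + |(I_0\cap J_0) - P|$ becomes $(r-1+|K_0|+|L_0|) \ge r + (|P|-1) + |(I_0\cap J_0)-P|$, i.e. $|(I_0\cap J_0)-P| \le 0$; so one must additionally prove $I_0 \cap J_0 = P$, i.e. $D_\Delta(L_0)\cap D_\Delta(K_0) = K_0\cup L_0$, which should follow from Lemma~\ref{lemma:mineq} (minimality forces $C_\Delta(L_0,A)-L_0 = K_0\cup(A-X)$, pinning down $I_0\cap X = K_0$ exactly, and dually). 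I expect this identification $I_0\cap J_0 = P$, together with ruling out new obstruction-like violations via minimality of $(K_0,L_0)$, to be the hard part; the $Q$-side is comparatively routine once one observes $Q$ plays the role of a cyclic hyperplane and that maximality of $(K_1,L_1)$ prevents any element of $\mcZ_\Delta$ from sitting strictly between $Q$ and $A\cup B$.
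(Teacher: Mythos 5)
Your outline for $\mcZ_\Delta^P$ (the dichotomy ``$L_0\subseteq I$ forces $P\subseteq I$, else $P\meet I=\varnothing$,'' and the identification $D_\Delta(L_0)\cap D_\Delta(K_0)=P$ via Lemma~\ref{lemma:mineq}) matches the paper's in spirit, but it contains a concrete error that breaks the (Z3) check for pairs involving $P$. For $I\in\mcZ_A$ incomparable with $P$, the join $P\join I$ is \emph{not} $A\cup B$: the set $D_\Delta(L_0\cup(I\cap B))$ lies in $\mcZ_A$, contains $P$ (by Lemma~\ref{lemma:mineq}) and $I$, and is in general a proper subset of $A\cup B$ of rank $|(I\cap A)\cup K_0|$. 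This smaller join is exactly what makes (Z3) work: the inequality then reduces term by term to $|I\cap L_0|\le|L_0|-1$, which holds by incomparability. With your join the right-hand side of (Z3) carries $r$ in place of $|(I\cap A)\cup K_0|$ and the inequality is genuinely false --- e.g.\ in $\Delta_7$ of Figure~\ref{fig:Delta7}, taking $I=D_\Delta(\{b_3\})=\{a_3,a_5,a_6,a_7,b_3\}$ gives $r(P)+r(I)=7$ while your right-hand side is $7+0+|I\cap P|=9$ --- so no appeal to minimality of $(K_0,L_0)$ can rescue it. Separately, your (Z3) verification for pairs $(Q,W)$ is, by your own admission, unfinished, and the quantity you are bounding ($|Q\cap W|\le r(W)-1$) is not the one that appears. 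The paper sidesteps the entire $Q$-analysis by duality: with $\Delta'$ the edge-reversal of $\Delta$ and $P'=(A\cup B)-Q$ (Lemma~\ref{lemma:obs_reverse}), the collection $\mcZ_\Delta^Q$ with its ranks is precisely $\{E-Z:Z\in\mcZ_{\Delta'}^{P'}\}$ for the dual of the matroid already built from $\mcZ_{\Delta'}^{P'}$, so the $P$-case does all the work.

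The second genuine gap is in $\mcZ_\Delta^{P,Q}$. Observing that $P\subsetneq Q$ is not the issue; adjoining both sets can destroy the existence of \emph{other} meets. For $I\in\mcZ_A$ with $P\subseteq I\cap Q$, the lower bounds of $\{I,Q\}$ in $\mcZ_\Delta^{P,Q}$ include both $P$ and $W=I\meet_Q Q\in\mcZ_A$, and unless these are comparable there is no meet and (Z0) fails. The paper proves $P\subseteq W$ whenever $W\ne\varnothing$, using $P\join_P W=D_\Delta(L_0\cup(W\cap B))\subseteq D_\Delta(L_1)\subseteq Q$, where maximality of $(K_1,L_1)$ enters; your proposal does not address this at all. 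On the positive side, your sanity check is sound: the pair $(D_\Delta(L_0),D_\Delta(K_0))$ that violated (Z3) in $\mcZ_\Delta$ is repaired precisely because its intersection equals $P$, which Lemma~\ref{lemma:mineq} forces. The paper packages this more efficiently via a general observation worth internalizing: once (Z0) and (Z2) hold for the augmented collection, (Z3) for pairs of \emph{old} cyclic flats is automatic, so only pairs involving the newly adjoined sets require checking.
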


\begin{proof}
  Each of the sets $P\cap X$, $P\cap Y$, $A-Q$, and $B-Q$ has at least
  two elements by Proposition~\ref{prop:obsge2}.  Recall that
  $C_\Delta$ is given by equations (\ref{eq:CDeltaB'}) and
  (\ref{eq:CDeltaA'}), and $D_\Delta$ by (\ref{form:Acf}) and
  (\ref{form:Bcf}).

  We first treat $\mcZ_\Delta^P$.  We begin with the lattice
  structure.  Consider $I,J \in \mcZ_\Delta^P - \{A\cup B,
  \varnothing\}$.  By symmetry, we may take $I \in \mcZ_A$.

  Assume first that $J \in \mcZ_A$. Clearly $I \join J$ is the same as
  in $\mcZ_A$, as is $I \meet J$ if $P \not\subseteq I\cap J$.  Sets
  in $\mcZ_A$ that contain $P$ also contain $D_\Delta(L_0)$, so if $P
  \subseteq I\cap J$, then $D_\Delta(L_0)\subseteq I$ and
  $D_\Delta(L_0)\subseteq J$, so $I \meet J$ is again the same as in
  $\mcZ_A$.

  If $J \in \mcZ_B$, then $I \join J = A\cup B$ and
  \begin{equation*}
    I \meet J = 
    \begin{cases}
      P & \text{if} \ P \subseteq I \cap J,\\
      \varnothing & \text{otherwise.}
    \end{cases}
  \end{equation*}

  Comparable sets trivially have a meet and a join, so we may assume
  that the remaining sets to treat, $I$ and $P$, are incomparable, in
  which case it is easy to see that $I \meet P = \varnothing$ and
  \begin{equation}\label{eq:joinIP}
    I \join P = D_\Delta(L_0\cup (I\cap B)).
  \end{equation}
  Thus, condition (Z0) holds.  Note that $r(I\join P) = |(I\cap A)\cup
  K_0|$; we will use this below.

  Next, we check condition (Z3) for $P$ and an incomparable set $I\in
  \mcZ_A$.  The following statements are equivalent:
  \begin{enumerate}
  \item $r(I\join P) + r(I\meet P) + |(I\cap P) - (I\meet P)| \le r(I)
    + r(P)$,
  \item $|(I\cap A)\cup K_0| + |I\cap K_0| + |I\cap L_0| \le |I\cap A|
    + |K_0|+|L_0|-1$,
  \item $|K_0-I| + |I\cap K_0| + |I\cap L_0| \le |K_0|+|L_0|-1$,
  \item $|I\cap L_0| \le |L_0| - 1,$ and
  \item $I\cap L_0 \subsetneq L_0$.
  \end{enumerate}
  Assume statement (5) fails, so $L_0\subseteq I$.  Lemma
  \ref{lemma:mineq} gives $C_\Delta(L_0,A)-L_0= K_0\cup (A-X)$.  The
  inclusion $L_0\subseteq I$ gives $C_\Delta(L_0,A)-L_0\subseteq I\cap
  A$.  Thus, $K_0\subseteq I$, so $P\subseteq I$, contrary to the
  assumption that they are incomparable.

  We next check condition (Z3) for $I \in \mcZ_A$ and $J \in \mcZ_B$.
  If the inequality in condition (Z3) holds for $I$ and $J$ in the
  lattice $\mcZ_\Delta$, then its counterpart clearly holds in the
  lattice $\mcZ_\Delta^P$.  Thus, assume the inequality fails for $I$
  and $J$ in $\mcZ_\Delta$.  The proof of Proposition~\ref{prop:noobs}
  shows that $P\subseteq I\cap J$, so $I \meet J = P$.  Therefore the
  inequality in condition (Z3) amounts to
  $$|I\cap A| + |J\cap B| \geq r+ |P|-1 + | (I\cap J) - P|,$$ which is
  equivalent to $|I\cap A| + |J\cap B| \geq r+ |I\cap J|-1$, and so to
  $$|I\cap X| + |J\cap Y| \geq |I\cap J|.$$ This last inequality
  holds because $X$ and $Y$ are disjoint and $I\cap J \subseteq X\cup
  Y$.

  Condition (Z3) for the remaining incomparable pairs follows by
  symmetry and Proposition \ref{prop:fund_trans_delta}.  We now check
  condition (Z2).  If $I \subsetneq J$, with either $I,J \in \mcZ_A$
  or $I,J \in \mcZ_B$, then the condition holds by
  Proposition~\ref{prop:fund_trans_delta}.  If $P \subseteq I$ and $I
  \in \mcZ_A$, then
  $$r(I) - r(P) < |I\cap A| - |P\cap X| =
  |I\cap A| - |P\cap A| \le |I-P|,$$ as required.  Since $|(A\cup B) -
  P| \ge r$, the inequality in condition (Z2) holds for $P$ and $A\cup
  B$.  Checking the condition is trivial if one of the sets is
  $\varnothing$, and it follows for the remaining pairs of sets by
  symmetry.  Thus, the assertion about $\mcZ_\Delta^P$ holds.

  For $\mcZ_\Delta^Q$, let $\Delta'$ be the critical graph obtained by
  reversing the orientation of each edge of $\Delta$.  Let $(K_0',
  L_0')$ be the minimum obstruction of $\Delta'$, and let $P' = K_0'
  \cup L_0'$.  Lemma~\ref{lemma:obs_reverse} gives $P' = (A\cup B) -
  Q$.  Let $M'$ be the matroid associated to $\mcZ_{\Delta'}^{P'}$.
  We claim that the cyclic flats and their ranks for the dual $M'^*$
  are precisely those of $\mcZ_\Delta^Q$.  We use
  Proposition~\ref{prop:cfdual}.  We have
  \begin{equation*}
    r_{M'^*}(Q) = |Q| + r_{M'}(P') - r(M')
    = (2r-|P'|) + (|P'|-1)-r
    = r-1,
  \end{equation*}
  as required.  Now suppose $I' \in \mcZ(M') - \{A\cup B, P',
  \varnothing\}$ where $(A-X) \subsetneq I'$. By checking the effect of
  reversing the orientation, we get that
  \begin{equation}\label{eq:obsii}
    (A\cup B) - I' = D_\Delta(A-I'),
  \end{equation}
  which is assigned rank $|B-I'|$ in $\mcZ_\Delta^Q$.  By symmetry, it
  follows that $\mcZ(M'^*) = \mcZ_\Delta^Q$.  Also,
  \begin{equation*}
    \begin{split}
      r_{M'^*}((A\cup B) - I') &= |(A\cup B) - I'| + r_{M'}(I') - r(M')\\
      &= 2r-|I'| + |I'\cap A| -r\\
      &= r-|I'\cap B|\\
      &= |B-I'|.
    \end{split}
  \end{equation*}

  Before treating $\mcZ_\Delta^{P,Q}$, we make a general observation.
  Let $M$ be a matroid with rank function $\rho$, and fix a set $Z'
  \in 2^{E(M)} - \mcZ(M)$.  Let $\mcZ'$ be the collection $\mcZ(M)\cup
  \{Z'\}$ and suppose a function $r' \from \mcZ' \to \mathbb{N}$
  agrees with $\rho$ on $\mcZ(M)$.  Now assume that $r'$ and $\mcZ'$
  satisfy conditions (Z0) and (Z2) of Theorem~\ref{thm:cyclic_flats}.
  We claim that the inequality in condition (Z3) for $r'$ and $\mcZ'$
  follows for all sets $I, J \in \mcZ(M)$.  To see this, note that the
  presence of $Z'$ makes the join of $I$ and $J$ smaller precisely
  when $I\cup J\subseteq Z' \subsetneq I\join_{\mcZ(M)} J$, and this
  preserves the validity of condition (Z3). Also, the meet of $I$ and
  $J$ is greater precisely when $I\meet_{\mcZ(M)} J\subsetneq Z'
  \subseteq I\cap J$, and condition (Z3) follows in this case since
  condition (Z2) gives $|Z'-(I\meet_{\mcZ(M)}
  J)|>r'(Z')-r'(I\meet_{\mcZ(M)} J)$.  In all other cases, neither the
  meet nor the join changes.

  Now $(A\cup B) - (X\cup Y) \subseteq Q-P$, so $|Q-P| \ge
  r-1=r(Q)>r(Q)-r(P)$, so condition (Z2) holds for $P$ and $Q$.  By
  what we noted above, condition (Z3) for $\mcZ_\Delta^{P,Q}$ will
  follow from our work on $\mcZ_\Delta^P$ and $\mcZ_\Delta^Q$ once we
  prove condition (Z0), which we do next.

  Since $\mcZ_\Delta^{P,Q}$ is finite and has a greatest member,
  $A\cup B$, it suffices to show that meets exist.  Let $\meet_P$ and
  $\join_P$ denote the operations of the lattice $\mcZ_\Delta^P$, and
  similarly for the others.  Let $I\in \mcZ_A - \{A\cup B,
  \varnothing\}$.  Let $W=I\meet_Q Q$, which, being contained in $I$,
  is in $\mcZ_A$.  The only possible candidates for $I\meet_{P,Q} Q$
  are $W$ and $P$, and the latter is a candidate only if $P\subseteq
  I\cap Q$, so assume this inclusion holds.  Now $W\subseteq P$ if and
  only if $W=\varnothing$, in which case $I\meet_{P,Q} Q=P$, so assume
  $W\ne\varnothing$.  Thus, $P\vee_P W\in \mcZ_A$.  From $W\subseteq
  I$ and $P\subseteq I$ we get $P\vee_P W\subseteq I$.  From $W\in
  \mcZ_A$ and $W\subseteq Q$ we get $W\cap B\subseteq L_1$.  By
  equation (\ref{eq:joinIP}), $$P\vee_P W = D_\Delta(L_0\cup (W\cap
  B))\subseteq D_\Delta(L_1)\subseteq Q.$$ Thus, $I\meet_Q
  Q=W\subseteq P\vee_P W\subseteq I\cap Q$, which, since $P\vee_P W\in
  \mcZ_A$, gives $P\vee_P W=W$.  Thus, $P\subseteq W$, so
  $I\meet_{P,Q} Q=W$.  By symmetry, if $J \in \mcZ_B$, then
  $J\meet_{P,Q} Q$ exists.  It is easy to see that all other meets
  exist since $\mcZ_\Delta^P$ and $\mcZ_\Delta^Q$ are lattices, so
  conditions (Z0)--(Z3) hold for $\mcZ_\Delta^{P,Q}$.
\end{proof}

Both $A$ and $B$ are bases of the matroids whose lattices of cyclic
flats are $\mcZ_\Delta^P$, $\mcZ_\Delta^Q$, and $\mcZ_\Delta^{P, Q}$,
as we see from the inequalities $|P|-1+|A-P|>|A|$ and $r-1+|A-Q|>|A|$,
their counterparts for $B$, equation (\ref{eq:cfrank}), and
Proposition \ref{prop:fund_trans_delta}.  Also, Proposition
\ref{prop:obsge2} ensures that, for all three matroids, the
basis-exchange digraph of $A$ and $B$ is $\Delta$.  Neither
$\mcZ_\Delta^P$ nor $\mcZ_\Delta^Q$ is a suitable choice for
$M(\Delta_7)$, where $\Delta_7$ is the digraph in
Figure~\ref{fig:Delta7}, because both of the resulting matroids have
non-base-orderable proper minors.  We define $M(\Delta)$ to be the
matroid with $\mcZ(M(\Delta)) = \mcZ_\Delta^{P, Q}$, and we know of no
such $M(\Delta)$ that is not an excluded minors for base-orderability.
Another reason for choosing this definition of $M(\Delta)$ is to make
following proposition true.

\begin{proposition}
  Let $\Delta$ be a critical graph. If $\Delta'$ is the digraph
  obtained by reversing the orientation of every edge of $\Delta$,
  then $M(\Delta)^{*} = M(\Delta')$.
\end{proposition}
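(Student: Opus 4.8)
The plan is to show directly that the lattice of cyclic flats of $M(\Delta)^{*}$, computed via Proposition~\ref{prop:cfdual}, coincides with $\mcZ_{\Delta'}^{P',Q'}$, where $(\,\cdot\,)'$ denotes the reversal operation throughout. By Proposition~\ref{prop:cfdual}, $\mcZ(M(\Delta)^{*}) = \{(A\cup B) - Z : Z \in \mcZ_\Delta^{P,Q}\}$, and the rank of $(A\cup B)-Z$ in $M(\Delta)^{*}$ is $|(A\cup B)-Z| + r_{M(\Delta)}(Z) - r$. So the task reduces to a bijective, rank-preserving matching between the complemented sets of $\mcZ_\Delta^{P,Q}$ and the sets of $\mcZ_{\Delta'}^{P',Q'}$.

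First I would handle the three ``special'' sets. We have $(A\cup B) - (A\cup B) = \varnothing$ and $(A\cup B) - \varnothing = A\cup B$, which match the bottom and top of $\mcZ_{\Delta'}^{P',Q'}$ with the correct ranks (using $r(M(\Delta))=r$, which holds since $A$ is a basis). Next, Lemma~\ref{lemma:obs_reverse} tells us that the minimum obstruction of $\Delta'$ is $(X-K_1, Y-L_1)$ and the maximum obstruction of $\Delta'$ is $(X-K_0, Y-L_0)$, so $P' = (X-K_1)\cup(Y-L_1) = (A\cup B) - Q$ and $Q' = (X-K_0)\cup(Y-L_0)\cup(A-X)\cup(B-Y) = (A\cup B) - P$. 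A short rank computation, exactly like the ones in the proof of Proposition~\ref{prop:obslattice} for $r_{M'^{*}}(Q)$ and $r_{M'^{*}}((A\cup B)-I')$, confirms that $(A\cup B)-Q$ gets rank $|P'|-1$ and $(A\cup B)-P$ gets rank $r-1$ in $M(\Delta)^{*}$, matching the prescribed ranks of $P'$ and $Q'$ in $\mcZ_{\Delta'}^{P',Q'}$.

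The bulk of the work is the correspondence between $\mcZ_\Delta$ and $\mcZ_{\Delta'}$ under complementation. By symmetry it suffices to show that for $I \in \mcZ_A(\Delta) - \{A\cup B, \varnothing\}$, the complement $(A\cup B) - I$ lies in $\mcZ_B(\Delta')$ with rank $|B - I| = r_{M(\Delta)^{*}}((A\cup B)-I)$, and that this map is a bijection onto $\mcZ_B(\Delta') - \{A\cup B, \varnothing\}$. The key identity is $(A\cup B) - I = D_{\Delta'}(A - I)$ — this is precisely equation~(\ref{eq:obsii}) from the proof of Proposition~\ref{prop:obslattice}, which I would either cite or re-derive: reversing every arrow of $\Delta$ turns $C_\Delta(b,A)$-membership into $C_{\Delta'}(a,B)$-membership in a complementary fashion, and unwinding the definitions (\ref{form:Acf}) and (\ref{form:Bcf}) of $D_\Delta$ gives the stated equality. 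Since $A - I$ ranges over nonempty proper subsets-of-$A$ closures as $I$ ranges over $\mcZ_A(\Delta)-\{A\cup B,\varnothing\}$, and $D_{\Delta'}$ applied to $A-I$ produces exactly the members of $\mcZ_B(\Delta')$, injectivity and surjectivity follow. The rank check is the same two-line computation as for $r_{M'^{*}}((A\cup B)-I')$ in Proposition~\ref{prop:obslattice}: $|(A\cup B)-I| + |I\cap A| - r = 2r - |I| + |I\cap A| - r = r - |I\cap B| = |B-I|$.

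The main obstacle I anticipate is purely bookkeeping: keeping straight which obstruction of $\Delta$ maps to which obstruction of $\Delta'$ (the minimum/maximum swap), and verifying equation~(\ref{eq:obsii}) with enough care that the edge-case behavior on $A-X$ and $B-Y$ (the isolated vertices) is correct — these sets are forced into or out of every cyclic flat, and a sign error there would break the bijection. Since essentially every ingredient (the reversal lemma, the rank formulas, equation~(\ref{eq:obsii}), and the verification that $\mcZ_{\Delta'}^{P',Q'}$ is itself a valid lattice of cyclic flats) has already been established in Proposition~\ref{prop:obslattice} and Lemma~\ref{lemma:obs_reverse}, the proof should be short: assemble the three special sets, invoke the $\mcZ_A \leftrightarrow \mcZ_B$ complementation bijection via~(\ref{eq:obsii}), match ranks, and conclude by Theorem~\ref{thm:cyclic_flats} that both matroids have the same lattice of cyclic flats with the same rank function, hence are equal.
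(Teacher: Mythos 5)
Your proof is correct and follows the route the paper itself takes: the paper states this proposition without proof, but its verification of $\mcZ_\Delta^{Q}$ inside the proof of Proposition~\ref{prop:obslattice} is exactly your argument (Proposition~\ref{prop:cfdual}, the dual rank formula, Lemma~\ref{lemma:obs_reverse}, and equation~(\ref{eq:obsii})) specialized to one of the two added sets, and extending that complementation to all of $\mcZ_{\Delta}^{P,Q}$ as you do is the intended completion. The only point worth adding is that the proposition also covers critical graphs without obstructions, where the same $\mcZ_A\leftrightarrow\mcZ_B$ complementation bijection applies verbatim with the sets $P$ and $Q$ omitted.
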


While Ingleton did not state his construction of $M(\Delta)$ when
$\Delta$ has an obstruction, he did state this duality result.  We
think it likely, but cannot be certain, that the matroid $M(\Delta)$
defined above is the one he intended.  With that caution, we state the
next conjecture.

\begin{conjecture}\label{conj:obs}
  If $\Delta$ has an obstruction, then $M(\Delta)$, the matroid with
  lattice of cyclic flats equal to $\mcZ_\Delta^ {P,Q}$, is an
  excluded minor for $\mathcal{BO}$.
\end{conjecture}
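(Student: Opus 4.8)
The plan is to mirror, now with the two extra cyclic flats $P$ and $Q$ of $\mcZ_\Delta^{P,Q}$ present, the strategy of Theorem~\ref{thm:k4like} (which proves a special case of Conjecture~\ref{conj:noobs}). Two things must be established: that $M(\Delta)$ itself is not base-orderable, and that every proper minor of $M(\Delta)$ is base-orderable.

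The first is immediate. By Proposition~\ref{prop:obslattice} the collection $\mcZ_\Delta^{P,Q}$ does define a matroid, and, as remarked after that proposition, $A$ and $B$ are bases of $M(\Delta)$ with $\bed^{M(\Delta)}_{A,B}=\Delta$. Since $\Delta$ is a critical graph, it contains an orientation of some $K_{s,t}$ with $s,t\ge 2$ and $s+t=r(M(\Delta))+1$, so Lemma~\ref{lem:nbo} shows that $A$ and $B$ have no exchange-ordering; hence $M(\Delta)\notin\mathcal{BO}$. Note also that $E(M(\Delta))=A\cup B$ is a disjoint union of two bases, so $M(\Delta)$ has no loops or coloops and $|E(M(\Delta))|=2\,r(M(\Delta))$.

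Next I would invoke Lemma~\ref{lem:kbodorc} with $k=1$, which reduces the task to showing that every single-element deletion $M(\Delta)\delete x$ is base-orderable. By the symmetry that interchanges the two bipartition classes of $\Delta$, one may assume $x\in B$, so that $A$ is a basis of $M(\Delta)\delete x$. Using Lemma~\ref{lem:cfminor} I would compute $\mcZ(M(\Delta)\delete x)$ from $\mcZ_\Delta^{P,Q}$, splitting into cases according to the position of $x$ relative to $B-Y$, to $Y$, and to the sets $L_0\subseteq L_1$ coming from the minimum and maximum obstructions (these cases are genuinely distinct by Proposition~\ref{prop:obsge2}, which gives $|L_0|,|Y-L_1|\ge 2$). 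In the ``outer'' cases the deletion destroys or merges the obstruction-induced flats, and the resulting lattice of cyclic flats satisfies the Mason-Ingleton inequality~(\ref{eq:micond}) for all antichains of size at least $3$; Corollary~\ref{cor:micond} then makes $M(\Delta)\delete x$ transversal, hence strongly base-orderable, hence a member of $\mathcal{BO}$. In the ``inner'' cases, where the images of both $P$ and $Q$ persist in the minor, I would argue directly via Lemma~\ref{lem:nbo}: for an arbitrary pair of bases $B_1,B_2$ of $M(\Delta)\delete x$, one shows that $\bed^{M(\Delta)\delete x}_{B_1,B_2}$ contains no orientation of a $K_{s',t'}$ with $s',t'\ge 2$ and $s'+t'=r(M(\Delta))+1$; any such subgraph would have to be source- and sink-free (by the fundamental-circuit analysis underlying Propositions~\ref{prop:kbo_fund_circ} and~\ref{prop:no_source_sink}), while the bounds of Proposition~\ref{prop:obsge2} on $|K_0|,|L_0|,|X-K_1|,|Y-L_1|$ rule this out once a single vertex has been removed.

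The crux is this last, ``inner'', case analysis, and I expect it to be the main obstacle. The authors already note that neither $\mcZ_\Delta^{P}$ nor $\mcZ_\Delta^{Q}$ is a suitable choice for $M(\Delta)$, precisely because each yields non-base-orderable proper minors, so the argument must use $P$ and $Q$ together: deleting $x$ leaves near-critical structure on both bipartition classes, and this structure is ``repaired'' on one side by the trace of $P$ and on the other by the trace of $Q$. Pinning this down---determining exactly which cyclic flats of $\mcZ_\Delta^{P,Q}$ survive in each case, checking transversality where it holds, and otherwise excluding the forbidden complete bipartite subgraphs from every basis-exchange digraph of the minor---is where the real work lies, and it is presumably why the statement remains only a conjecture.
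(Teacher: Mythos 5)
The statement you have set out to prove is Conjecture~\ref{conj:obs}; the paper does not prove it, and offers only computational verification for $r\le 9$ (Section~\ref{subsec:data}). Your proposal does not close it either. The easy half is fine: Proposition~\ref{prop:obslattice} gives the matroid, the remarks following it show that $A$ and $B$ are bases with $\bed^{M(\Delta)}_{A,B}=\Delta$, and Lemma~\ref{lem:nbo} then yields $M(\Delta)\notin\mathcal{BO}$; the reduction via Lemma~\ref{lem:kbodorc} to showing that every single-element deletion is base-orderable is also sound. But everything after that is a plan rather than an argument, as you acknowledge, and that remaining part is the entire content of the conjecture.

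Two steps of the plan would need repair even as a plan. First, in your ``inner'' cases you propose to exclude only \emph{source- and sink-free} orientations of complete bipartite graphs from the basis-exchange digraphs of $M(\Delta)\delete x$, citing Proposition~\ref{prop:no_source_sink}. That proposition says that a witness with a source or a sink prevents the ambient matroid from being an \emph{excluded minor}; it does not say that such a witness cannot certify that $M(\Delta)\delete x$ fails to be base-orderable. To conclude that the deletion is base-orderable you must exclude \emph{every} orientation permitted by Lemma~\ref{lem:nbo}, over \emph{every} pair of bases of the deletion, or else argue through an excluded-minor characterization of $\mathcal{BO}$ that you do not have. Second, the claim that in the ``outer'' cases the deletions are transversal is asserted without any verification of the Mason--Ingleton inequalities (Corollary~\ref{cor:micond}) for the surviving cyclic flats, which now include traces of both $P$ and $Q$; the analogous computation in Theorem~\ref{thm:k4like} is carried out only for obstruction-free $\Delta$ of a very special form (and there it is the \emph{contractions} that are shown transversal, with deletions handled by duality), and nothing in the paper suggests the corresponding statement for the general $M(\Delta)$ with obstructions. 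As it stands, the proposal is an honest research outline for an open problem, not a proof.
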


\subsection{Evidence for the conjectures}\label{subsec:data}

Using a computer, we have verified Conjectures~\ref{conj:noobs} and
\ref{conj:obs} for all critical graphs with $r \le 9$.  We first note
that it is straightforward to write a program to test if a matroid is
$k$-base-orderable: just check all possible bijections between all
pairs of bases.  To test the conjectures, we first generated, up to
isomorphism, all orientations of each $K_{s,t}$, where $s+t=r+1$ and
$s,t\geq 2$, using the \texttt{directg} command distributed with
Brendan McKay's \texttt{nauty} program \cite{nug}.  Next we rejected
orientations that had sources or sinks, checked if an obstruction was
present, and then constructed $M(\Delta)$ with the help of the matroid
package for SageMath \cite{sage_matroids}.  Finally, we checked
whether the single-element deletions and contractions of $M(\Delta)$
were base-orderable using a program we wrote in the C programming
language.  We discovered that, for $r \le 9$, if $\Delta$ has no
obstruction, then $M(\Delta)$ is an excluded minor for $\mathcal{BO}$
and $\mathcal{SBO}$, while if $\Delta$ has an obstruction, then
$M(\Delta)$ is an excluded minor for $\mathcal{BO}$ but not
$\mathcal{SBO}$.  Table \ref{table:computation} gives the number of
matroids $M(\Delta)$ checked this way.

\begin{table}[h]
  \begin{center}
  \begin{tabular}{|c|c|c|c|c|}
    \hline
     &   & orientations  & orientations
      &  \rule{0pt}{10pt}  \\ 
    $r$ & $K_{s,t}$  &  with no obstructions & 
    with obstructions  &  total \\ \hline
    $3$  & $K_{2,2}$  & $1$  & $0$ & $1$  \rule{0pt}{10pt}  
    \\ \hline
    $4$  & $K_{2,3}$  & $1$  & $0$ & $1$  \rule{0pt}{10pt}  
    \\ \hline
    $5$  & $K_{2,4}$  & $2$  & $0$ & $2$  \rule{0pt}{10pt}  
    \\ 
    $5$  & $K_{3,3}$  & $3$  & $0$ & $3$  \rule{0pt}{10pt}  
    \\ \hline
    $6$  & $K_{2,5}$  & $2$  & $0$ & $2$  \rule{0pt}{10pt}  
    \\ 
    $6$  & $K_{3,4}$  & $15$  & $0$ & $15$  \rule{0pt}{10pt}  
    \\ \hline
    $7$  & $K_{2,6}$  & $3$  & $0$ & $3$  \rule{0pt}{10pt}  
    \\ 
    $7$  & $K_{3,5}$  & $34$  & $0$ & $34$  \rule{0pt}{10pt}  
    \\ 
    $7$  & $K_{4,4}$  & $43$  & $1$ & $44$  \rule{0pt}{10pt}  
    \\ \hline
    $8$  & $K_{2,7}$  & $3$  & $0$ & $3$  \rule{0pt}{10pt}  
    \\ 
    $8$  & $K_{3,6}$  & $68$  & $0$ & $68$  \rule{0pt}{10pt}  
    \\ 
    $8$  & $K_{4,5}$  & $331$  & $3$ & $334$  \rule{0pt}{10pt}  
    \\ \hline
    $9$  & $K_{2,8}$  & $4$  & $0$ & $4$  \rule{0pt}{10pt}  
    \\ 
    $9$  & $K_{3,7}$  & $120$  & $0$ & $120$  \rule{0pt}{10pt}  
    \\ 
    $9$  & $K_{4,6}$  & $1111$  & $8$ & $1119$  \rule{0pt}{10pt}  
    \\ 
    $9$  & $K_{5,5}$  & $1203$  & $10$ & $1213$  \rule{0pt}{10pt}  
    \\ \hline
  \end{tabular}
  \end{center}
  \caption{The number of matroids $M(\Delta)$ checked by computer.}
  \label{table:computation}
\end{table}

\section{Complete classes of matroids}\label{sec:complete}

Recall Definition \ref{def:complete}: a class of matroids is complete
if it is closed under the operations of minors, duals, direct sums,
truncations, and induction by directed graphs.  In Section
\ref{subsec:reform}, we justify the equivalent formulation of
completeness given in Theorem \ref{thm:recastcomplete}, which better
suits our work in Section \ref{sec:kbo}.  In Section
\ref{subsec:otherops}, we discuss some properties of complete classes,
including additional operations under which they are closed.

\subsection{A reformulation of complete classes}\label{subsec:reform}
We prove the following theorem.

\begin{theorem}\label{thm:recastcomplete}
  A class of matroids is complete if and only if it is closed under
  the operations of minors, duals, direct sums, and principal
  extension.
\end{theorem}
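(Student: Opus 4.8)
The plan is to prove the two directions separately, with the bulk of the work lying in showing that the four operations of Theorem~\ref{thm:recastcomplete} suffice to generate truncation and induction by a directed graph. The forward direction (completeness implies closure under the listed four operations) is almost immediate: minors, duals, and direct sums appear verbatim in Definition~\ref{def:complete}, and a principal extension of a matroid $M$ by a set $A\subseteq E(M)$ (freely adding an element to the flat $\cl_M(A)$, then possibly restricting) can be realized by a short sequence of the standard complete-class operations — one adjoins a free point via induction by an appropriate directed graph, or obtains it as a minor of $M\oplus U_{1,1}$ after an induction step. So the first step is to spell out this realization of principal extension inside any complete class.

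The harder direction is the converse: assuming closure under minors, duals, direct sums, and principal extension, recover truncation and induction by a digraph. For truncation, the key observation is the classical fact that the truncation $T(M)$ of $M$ equals the principal extension of $M$ by the whole ground set $E(M)$ — that is, freely add an element $e$ on the flat $\cl_M(E(M))=E(M)$, obtaining a matroid of rank $r(M)+1$, and then contract $e$. Since contraction is a minor operation and principal extension is assumed available, $T(M)$ lies in the class. One should state and use this identity carefully, checking it at the level of rank functions or cyclic flats so that the single free point added "in general position" does indeed produce the truncation after contraction.

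For induction by a digraph $G$, the plan is to decompose the operation into elementary steps, each a principal extension followed by a deletion/contraction. Recall that inducing $M$ along $G$ (with $M$ on a subset of the vertices) amounts to: for each vertex $v$ reachable by an arc, introduce a new element that is freely placed on the flat spanned by the in-neighbors' images, then delete the old elements that have been "pushed forward." A single arc $u\to v$ corresponds exactly to a principal extension by $\{u\}$ (adding a parallel copy of $u$, i.e.\ a free point on $\cl_M(\{u\})$) followed, if needed, by relabeling; a vertex with several in-neighbors $u_1,\dots,u_k$ corresponds to a principal extension by $\{u_1,\dots,u_k\}$. Iterating over all arcs of $G$, using direct sums to introduce isolated new ground-set elements where the digraph demands them, and finishing with a deletion of the original elements not retained, produces exactly the induced matroid. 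The main technical burden here is bookkeeping: one must verify that performing these principal extensions in a suitable order (say, a topological-order-like processing when $G$ is acyclic, and a more careful argument, or a reduction to the acyclic case via known results on matroid induction, when $G$ has directed cycles) yields the induced matroid on the nose, not merely a freer or more dependent matroid.

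The step I expect to be the main obstacle is precisely this last one — showing that repeated principal extensions reconstruct matroid induction by an arbitrary digraph, including the case of digraphs with cycles, where a naive vertex-by-vertex processing is not well-defined. I anticipate handling this either by invoking the standard structure theory of gammoids and matroid induction (reducing induction by a general digraph to induction by a bipartite digraph, then to a sequence of single-arc operations) or by a direct induction on the number of arcs, at each stage peeling off one arc as a principal extension and arguing that the order of operations does not matter because each intermediate object's rank function is forced. Everything else is routine verification at the level of rank functions or, following the paper's preferred framework, cyclic flats via Theorem~\ref{thm:cyclic_flats}.
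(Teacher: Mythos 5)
Your overall architecture is sound---the forward direction is easy, truncation is $(M+_{E(M)}e)/e$, and the whole theorem comes down to recovering induction by directed graphs from principal extensions---but the step you yourself flag as the main obstacle is where the plan has a genuine gap, and your primary proposed fix would not work. Peeling off arcs of a digraph $\Gamma$ one at a time as principal extensions on the primal side does not reconstruct $\Gamma(M)$: independence in the induced matroid is defined by systems of \emph{vertex-disjoint} directed paths, a global constraint that is not captured by composing single-arc (parallel-element) extensions, and there is no well-defined processing order when $\Gamma$ has directed cycles. Your fallback, ``reduce induction by a general digraph to induction by a bipartite graph,'' is the right idea but is false as a direct primal reduction (a strict gammoid need not be transversal), so it cannot be carried out without the one ingredient you never mention: duality. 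The paper's route (Theorem~\ref{thm:ingpiff}, due to Ingleton and Piff) is that if $M$ is induced from $N$ by a digraph, then $M^*$ is induced from $N^*$ by a bipartite graph; since the class is assumed closed under duality and minors, this reduces the entire theorem to closure under bipartite induction (Theorem~\ref{thm:reductionrecastcomplete}), which is where the real work happens.

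For the bipartite case the paper proves an explicit rank formula for an iterated sequence of principal extensions into subsets of $E(M)$ (Lemma~\ref{lem:masoniterateprep}), showing in particular that the result is independent of the order, and then matches that formula against the rank formula~(\ref{eq:induced}) for $\Delta(M)$; deleting $E(M)$ at the end gives the induced matroid exactly (Lemma~\ref{lem:masoniteratepe}). You gesture at ``the order of operations does not matter because each intermediate object's rank function is forced,'' but that is precisely the lemma that must be proved, and it is cleanly true only in the bipartite setting, where every new element is extended into a subset of the \emph{original} ground set---not in your arc-by-arc setting, where later extensions would have to refer to earlier new elements. Two smaller points: your parenthetical that the free extension $M+_{E(M)}e$ has rank $r(M)+1$ is wrong---by equation~(\ref{eq:principal_ext}) it has rank $r(M)$, and if the rank did go up then contracting $e$ would return $M$ rather than its truncation; and for the forward direction the paper realizes $M+_Y e$ concretely as a bipartite induction from a disjoint isomorphic copy of $M$, which is cleaner than either of the realizations you sketch.
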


As we justify this theorem, largely by collecting known results, we
discuss principal extension, as well as induction by both directed and
bipartite graphs.  Additional information on these topics can be found
in \cite[Sections 7.1 and 11.2]{oxleybook}.

We first review the two notions of inducing matroids.
First let $\Gamma$ be a directed graph.  Let $M$ be a matroid with $E(M)
\subseteq V(\Gamma)$. In the \emph{induced matroid} $\Gamma(M)$ on
$V(\Gamma)$, a subset of $V(\Gamma)$ is independent if and only if it
can be linked to an independent set of $M$.

Now let $M$ be a matroid, let $T$ be a set disjoint from
$E(M)$, and let $\Delta$ be a bipartite graph with bipartition $\{T,
E(M)\}$. In the \emph{induced matroid} $\Delta(M)$ on $T$, a subset of
$T$ is independent if and only if it can be matched in $\Delta$ to an
independent set of $M$.  (We caution the reader to not confuse
$\Delta(M)$ with the matroid $M(\Delta)$ defined in
Section~\ref{sec:ingletons_conjecture}.)  Thus, transversal matroids
are those that can be induced from free matroids by bipartite graphs.
For $X \subseteq T$, its rank in the induced matroid $\Delta(M)$ is
\begin{equation}\label{eq:induced}
  r_{\Delta(M)}(X) = \min \{ r_M(N(Y)) + |X - Y|\,:\,Y\subseteq X \}.
\end{equation}

Ingleton and Piff \cite{ingleton_piff} showed that if $M$ is induced
from $N$ by a directed graph, then $M^*$ is induced from $N^*$ by a
bipartite graph (see their proof of their Theorem 3.7).  This gives
the next result.

\begin{theorem}\label{thm:ingpiff}
  If a class of matroids is closed under induction by bipartite graphs
  and under duality, then it is also closed under induction by
  directed graphs.
\end{theorem}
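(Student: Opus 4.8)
The plan is to reduce everything to the Ingleton--Piff result quoted just above Theorem~\ref{thm:ingpiff}. Let $\mathcal{C}$ be a class of matroids that is closed under duality and under induction by bipartite graphs, and suppose $M$ is induced from some $N \in \mathcal{C}$ by a directed graph $\Gamma$; that is, $M = \Gamma(N)$ with $E(N) \subseteq V(\Gamma)$ and $M$ having ground set $V(\Gamma)$. I would show that $M \in \mathcal{C}$.

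First I would invoke the Ingleton--Piff theorem (their Theorem 3.7) to obtain a bipartite graph $\Delta$ with bipartition $\{E(M^*), E(N^*)\}$ such that $M^* = \Delta(N^*)$. The only thing to verify here is bookkeeping: that $\Delta$ is of the form required by the definition of induction by a bipartite graph given above, namely a bipartite graph with bipartition $\{T, E(N^*)\}$ where $T$ is disjoint from $E(N^*)$. Since $E(N^*) = E(N) \subseteq V(\Gamma) = E(M) = E(M^*)$ and the induced matroid $M^*$ is on $E(M^*)$, one takes $T = E(M^*)$; that $T$ can be chosen disjoint from $E(N^*)$ (or at worst, that the ground sets can be made disjoint by relabelling without affecting membership in the isomorphism-closed class $\mathcal{C}$) is exactly what the construction in Ingleton and Piff's proof provides.

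Then the argument closes in two steps: since $N \in \mathcal{C}$ and $\mathcal{C}$ is closed under duality, $N^* \in \mathcal{C}$; since $\mathcal{C}$ is closed under induction by bipartite graphs, $M^* = \Delta(N^*) \in \mathcal{C}$; and applying closure under duality once more gives $M = (M^*)^* \in \mathcal{C}$. Hence $\mathcal{C}$ is closed under induction by directed graphs. There is essentially no obstacle in this argument: the substantive content is entirely in the cited Ingleton--Piff result, and what remains is a short chase through closure under duality. The only point deserving care is confirming that the bipartite graph supplied by Ingleton and Piff genuinely has $E(N^*)$ as one side of its bipartition, so that the hypothesis ``closed under induction by bipartite graphs'' applies verbatim.
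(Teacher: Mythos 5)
Your proposal is correct and is essentially the paper's own argument: the paper also quotes the Ingleton--Piff result that $M=\Gamma(N)$ implies $M^*$ is induced from $N^*$ by a bipartite graph, and then concludes by the same two applications of duality-closure sandwiching one application of bipartite-induction-closure. The extra bookkeeping you note about the bipartition of $\Delta$ is fine and is handled implicitly in the paper by working with classes closed under isomorphism.
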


Thus, a dual-closed, minor-closed class of matroids is closed under
induction by directed graphs if and only if it is closed under
induction by bipartite graphs.

Intuitively, we get a principal extension of a matroid by adding a
point freely to a flat.  To be precise, let $M$ be a matroid with rank
function $r$, let $Y \subseteq E(M)$, and let $e$ be an element not in
$E(M)$.  The \emph{principal extension of $M$ into $Y$}, denoted $M
+_Y e$, is the matroid on the set $E(M)\cup \sg{e}$ whose rank
function is given as follows: for $X \subseteq E(M)$, we have $r_{M+_Y
  e}(X) = r(X)$, and
\begin{equation*}
  r_{M+_Y e}(X \cup e) =
  \begin{cases}
    r(X) & \text{if} \ r(X\cup Y) = r(X),\\
    r(X) + 1 & \text{otherwise,}
  \end{cases}
\end{equation*}
or, more compactly,
\begin{equation}\label{eq:principal_ext}
  r_{M +_{Y} e}(X \cup e) = \min \{r(X)+1,\ r(X\cup Y)\}.
\end{equation}
We also say that $M +_{Y} e$ is the matroid obtained by \emph{adding
  $e$ freely to the set $Y$}.  Note that $M +_{Y} e = M+_{\cl(Y)} e$.
The \emph{free extension} of $M$ is the principal extension $M
+_{E(M)} e$.  Also, the \emph{truncation} of $M$ is $(M +_{E(M)}
e)/e$.  Thus, in order to prove Theorem \ref{thm:recastcomplete}, it
suffices to prove the next result.

\begin{theorem}\label{thm:reductionrecastcomplete}
  A class of matroids is closed under deletion and principal extension
  if and only if it is closed under induction by bipartite graphs.
\end{theorem}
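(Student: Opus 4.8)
The plan is to prove the two implications of Theorem~\ref{thm:reductionrecastcomplete} separately, using the rank formulas for induced matroids and principal extensions given in equations~(\ref{eq:induced}) and~(\ref{eq:principal_ext}).

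First I would prove that a class closed under induction by bipartite graphs is closed under deletion and principal extension. Deletion is easy: if $\Delta$ is the bipartite graph with bipartition $\{E(M)-\{x\}, E(M)\}$ and edge set $\{e\,e : e \in E(M)-\{x\}\}$, then $\Delta(M) = M\delete x$ by equation~(\ref{eq:induced}), so closure under induction gives closure under deletion. For principal extension, given $M$ and $Y\subseteq E(M)$, take $T = E(M)\cup\{e\}$ and build the bipartite graph $\Delta$ with bipartition $\{T, E(M)\}$ whose edges join each $z\in E(M)$ to its own copy in $E(M)$, and also join the new element $e$ to every vertex of $Y$. Then for $X\subseteq E(M)$ we have $N_\Delta(X) = X$, so $r_{\Delta(M)}(X)=r(X)$, and for $X\cup\{e\}$ the formula~(\ref{eq:induced}) minimizes over $Y'\subseteq X\cup\{e\}$; the relevant cases are $Y' \ni e$, giving $r(N(Y'))+|X-Y'|$ where $N(Y')\supseteq Y$, versus $Y'=X$ giving $r(X)+1$. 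Unwinding this minimum should reproduce exactly $\min\{r(X)+1, r(X\cup Y)\}$, i.e.\ equation~(\ref{eq:principal_ext}). (One must be slightly careful that the minimum over subsets $Y'$ containing $e$ is achieved at $Y'=X\cup\{e\}$, giving $r(N(X)\cup Y)+0 = r(X\cup Y)$; a short monotonicity/submodularity argument handles this.) Hence $\Delta(M) = M+_Y e$, and the class is closed under principal extension.

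For the converse — closure under deletion and principal extension implies closure under induction by bipartite graphs — the key observation is that inducing by a bipartite graph can be built up one new vertex at a time, and adding a single new vertex $t$ with neighborhood $Z\subseteq E(M)$ amounts to: take a principal extension into (the closure of) $Z$, then contract or delete appropriately. Concretely, if $t$ has neighbors $Z$, then in $\Delta(M)$ the element $t$ behaves like a point added freely to $Z$; so I would show that the matroid on $E(M)\cup\{t\}$ in which $t$ is freely placed on $\cl(Z)$ — namely $M+_Z t$ — restricts correctly. Iterating over all vertices of $T$ (each time performing a principal extension on the current matroid), and then deleting the original ground set $E(M)$, produces $\Delta(M)$. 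Closure under deletion is needed both for the final step (deleting $E(M)$) and, combined with the standard fact that deletion and principal extension together yield contraction of the new point — since $(M+_Ye)/e$ is the truncation-type operation and more generally one recovers contractions via $M\contract x = (M^*\delete x)^*$, though here we avoid duals, so I would instead note directly that deleting away unneeded elements after a sequence of principal extensions suffices. The careful bookkeeping is: order $T = \{t_1,\dots,t_m\}$, let $M_0 = M$ and $M_i = M_{i-1} +_{Z_i} t_i$ where $Z_i = N_\Delta(t_i) \subseteq E(M)$, and verify by induction using~(\ref{eq:principal_ext}) and~(\ref{eq:induced}) that $(M_m)\delete E(M) = \Delta(M)$.

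The main obstacle I anticipate is the second direction: verifying that the iterated principal extension $M_m$, after deleting $E(M)$, has exactly the rank function~(\ref{eq:induced}) of $\Delta(M)$. The subtlety is that when $t_i$ is added freely to $Z_i$, its interaction with the previously added points $t_1,\dots,t_{i-1}$ must be the "generic" one dictated by the matching condition, not something more degenerate; this requires checking that principal extension into $Z_i\subseteq E(M)$ (a flat-closure of a subset of the \emph{original} ground set) does not accidentally create dependencies among the $t_j$'s that Hall-type matching would forbid. I would resolve this by an induction on $i$ computing $r_{M_i}(X)$ for arbitrary $X\subseteq E(M)\cup\{t_1,\dots,t_i\}$ via repeated application of~(\ref{eq:principal_ext}), matching it term-by-term against the formula $\min\{r_M(N_\Delta(Y)) + |X\cap\{t_1,\dots,t_i\} - Y| : Y \subseteq X\cap\{t_1,\dots,t_i\}\}$; the $\min$ over subsets $Y$ in~(\ref{eq:induced}) corresponds precisely to the nested $\min\{r(\,\cdot\,)+1, r(\,\cdot\,\cup Z_i)\}$ structure produced by the principal extensions. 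Everything else — the deletion direction, and assembling the pieces — is routine once this computation is in hand.
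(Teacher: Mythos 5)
Your proposal follows essentially the same route as the paper: one direction realizes deletion and principal extension as inductions by simple bipartite graphs (the paper just uses a disjoint isomorphic copy of $E(M)$ to keep the two sides of the bipartition disjoint, which your ``its own copy'' phrasing already anticipates), and the other direction realizes $\Delta(M)$ as the sequence of principal extensions $M_i = M_{i-1} +_{N_\Delta(t_i)} t_i$ followed by deleting $E(M)$, with the crux being the closed-form rank formula for iterated principal extensions --- exactly the paper's Lemma~\ref{lem:masoniterateprep} and Lemma~\ref{lem:masoniteratepe}. One small correction to the target formula in your inductive step: for sets $X$ meeting $E(M)$ the quantity being minimized must be $r_M\bigl((X\cap E(M))\cup N_\Delta(Y)\bigr)+|X\cap\{t_1,\dots,t_i\}-Y|$, i.e.\ the ground-set part of $X$ has to appear inside the rank, or the induction on $i$ will not close.
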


Let $\phi:E(M)\to E(M')$ be an isomorphism of $M$ onto $M'$, where
$E(M)$ and $E(M')$ are disjoint.  Fix a subset $Y$ of $E(M)$ and
element $e\not\in E(M)\cup E(M')$, and define a bipartite graph
$\Delta$ with bipartition $\{E(M)\cup e,E(M')\}$ and edge set
$$\{x\,\phi(x)\,:\,x\in E(M)\}\,\cup\,\{e\,\phi(y)\,:\,y\in Y\}.$$
From equation (\ref{eq:induced}) and those above giving the rank
function of $M+_Y e$, it is routine to show that the matroid that $M'$
induces on $E(M)\cup e$ via $\Delta$ is the principal extension $M+_Y
e$.  It is easy to realize deletion via induction by a bipartite
graph, so one direction of Theorem \ref{thm:reductionrecastcomplete}
follows.

The justification of the other direction, given in Lemma
\ref{lem:masoniteratepe}, uses the next lemma, which gives the rank
function of a sequence of principal extensions of $M$ into subsets of
$E(M)$.  This lemma implies that the result of a sequence of principal
extensions of $M$ into subsets of $E(M)$ does not depend on the order.
Thus we may say that these principal extensions are performed
\emph{simultaneously}.  To make this precise, let $M$ be a matroid
with rank function $r$, and let $e_1, e_2, \ldots, e_n$ be distinct
elements not in $E(M)$.  For $1 \le i \le n$, choose a set $F_i
\subseteq E(M)$.  Define $M_0 = M$, and for $0 \le i < n$, define
$M_{i+1} = M_{i} +_{F_{i+1}} e_{i+1}$.  For simplicity, we define $r_i
= r_{M_i}$ for $1 \le i \le n$.  Thus, $M_n$ is the matroid obtained
by consecutively adding $e_i$ freely to the set $F_i$ for $1 \le i \le
n$.  In Lemma \ref{lem:masoniterateprep}, we use $[n]$ for the set
$\{1,2,\ldots,n\}$.  For $I \subseteq [n]$, we define
$$e_I = \{e_i : i \in I\} \qquad \text{and} \qquad F_I = \bigcup_{i\in
  I} F_i.$$

\begin{lemma}\label{lem:masoniterateprep}
  Using the notation above, if $i\in [n]$, $X \subseteq E(M)$, and $J
  \subseteq [i]$, then
  \begin{equation}\label{eq:comp1}
    r_i(X \cup e_J) = \min_{I \subseteq J} \{r(X \cup F_I)
    + |J - I|\}.
  \end{equation}
\end{lemma}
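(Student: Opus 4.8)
The plan is to prove \eqref{eq:comp1} by induction on $i$. The base case $i=0$ is vacuous (or one may start from $i=1$ and check directly), and for the inductive step one assumes the formula for $M_i$ and derives it for $M_{i+1} = M_i +_{F_{i+1}} e_{i+1}$ using the defining rank formula \eqref{eq:principal_ext} for a single principal extension. So fix $i$ with $i+1\le n$, let $X\subseteq E(M)$, and let $J\subseteq [i+1]$. There are two cases: $i+1\notin J$ and $i+1\in J$.

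First I would handle the case $i+1\notin J$, which is immediate: then $X\cup e_J$ involves none of the elements added in the last step, so $r_{i+1}(X\cup e_J) = r_i(X\cup e_J)$, and the inductive hypothesis gives exactly the claimed minimum over $I\subseteq J\subseteq [i]$. For the case $i+1\in J$, write $J = J'\cup\{i+1\}$ with $J'\subseteq[i]$ and apply \eqref{eq:principal_ext} with the ground matroid $M_i$, the flat $F_{i+1}$, the new element $e_{i+1}$, and the set $X\cup e_{J'}$ in the role of ``$X$''. This yields
\begin{equation*}
  r_{i+1}(X\cup e_J) = \min\bigl\{\,r_i(X\cup e_{J'}) + 1,\ \ r_i\bigl((X\cup e_{J'})\cup F_{i+1}\bigr)\,\bigr\}.
\end{equation*}
Now I would apply the inductive hypothesis to each of the two terms. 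The first term expands to $\min_{I\subseteq J'}\{r(X\cup F_I)+|J'-I|\} + 1 = \min_{I\subseteq J'}\{r(X\cup F_I) + |J-I|\}$, using $|J-I| = |J'-I|+1$ when $I\subseteq J'\subseteq[i]$ and $i+1\in J$; this is precisely the part of the desired minimum ranging over subsets $I$ of $J$ that omit $i+1$. For the second term, the inductive hypothesis applied to the set $X\cup F_{i+1}$ (a subset of $E(M)$) and index set $J'$ gives $\min_{I\subseteq J'}\{r(X\cup F_{i+1}\cup F_I) + |J'-I|\}$; writing $I^+ = I\cup\{i+1\}$, we have $X\cup F_{i+1}\cup F_I = X\cup F_{I^+}$ and $|J'-I| = |J - I^+|$, so this equals $\min_{I^+\subseteq J,\ i+1\in I^+}\{r(X\cup F_{I^+}) + |J-I^+|\}$, which is exactly the part of the desired minimum ranging over subsets of $J$ that contain $i+1$. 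Taking the minimum of the two terms reconstructs $\min_{I\subseteq J}\{r(X\cup F_I)+|J-I|\}$, completing the induction.

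The only genuinely delicate point is the bookkeeping with the index sets in the inductive step — in particular making sure that applying the inductive hypothesis to the \emph{enlarged} ground set $X\cup F_{i+1}$ is legitimate (it is, since $F_{i+1}\subseteq E(M)$, so $X\cup F_{i+1}\subseteq E(M)$ and the hypothesis applies verbatim) and that the two index ranges ``$I\subseteq J$ with $i+1\notin I$'' and ``$I\subseteq J$ with $i+1\in I$'' partition $\{I : I\subseteq J\}$ so that combining the two minima gives the full minimum. I expect no real obstacle beyond this careful case analysis; the order-independence of simultaneous principal extensions, asserted after the lemma statement, then follows because the right-hand side of \eqref{eq:comp1} (with $J=[n]$) is manifestly symmetric in the pairs $(e_i, F_i)$.
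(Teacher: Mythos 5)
Your proposal is correct and follows essentially the same route as the paper's proof: induction on $i$, splitting on whether $i+1\in J$, applying equation~(\ref{eq:principal_ext}) to $M_i +_{F_{i+1}} e_{i+1}$, and then using the inductive hypothesis on each of the two resulting terms (the second with $X\cup F_{i+1}$ in place of $X$) so that the two partial minima partition the subsets of $J$ by whether they contain $i+1$. No differences of substance.
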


\begin{proof}
  Equation (\ref{eq:principal_ext}) gives the case $i=1$.  Assume that
  equation~(\ref{eq:comp1}) holds for some $i\in [n-1]$. To deduce case
  $i+1$, let $X \subseteq E(M)$ and $J \subseteq [i+1]$.  If $i+1
  \not\in J$, then we have $r_{i+1}(X \cup e_J) = r_i(X \cup e_J)$,
  from which the needed equality for $r_{i+1}(X \cup e_J)$ follows.
  Now assume $i+1 \in J$.  Set $J' = J - \{i+1\}$, so $e_J = e_{J'}
  \cup \{e_{i+1}\}$.  By equation~(\ref{eq:principal_ext}),
  \begin{equation}\label{eq:inductionA}
    \begin{split}
      r_{i+1}(X\cup e_J) &= r_{i+1}(X\cup e_{J'}\cup e_{i+1})\\
      &=\min\{r_i(X\cup e_{J'})+1,\, r_i(X\cup e_{J'}\cup F_{i+1})\}.
    \end{split}
  \end{equation}
  By the induction hypothesis, the first term, $r_i(X\cup e_{J'})+1$,
  is
  \begin{equation*}
    \min_{I'\subseteq J'}\{r(X\cup F_{I'})+|J'-I'|+1\} 
    =    \min_{I\subseteq J\,:\, i+1\not\in I}\{r(X\cup F_I)+|J-I|\} ,
  \end{equation*}
  and the second, $r_i(X\cup e_{J'}\cup F_{i+1})$, is
  \begin{equation*}
    \min_{I'\subseteq J'}\{r(X\cup F_{I'}\cup F_{i+1})+|J'-I'|\}
    = \min_{I\subseteq J\,:\, i+1\in I}\{r(X\cup F_I)+|J-I|\}.  
  \end{equation*}
  With these equalities, equation (\ref{eq:inductionA}) gives
  equation~(\ref{eq:comp1}) for $r_{i+1}(X\cup e_J)$.
\end{proof}

\begin{lemma}\label{lem:masoniteratepe}
  Let $M$ be a matroid, let $T$ be a set disjoint from $E(M)$, and let
  $\Delta$ be a bipartite graph with bipartition $\{T, E(M)\}$.  The
  induced matroid $\Delta(M)$ is obtained from $M$ by first adding
  each $t\in T$ freely to the set $N_\Delta(t)$ and then deleting
  $E(M)$.
\end{lemma}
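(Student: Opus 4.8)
The plan is to compute the rank function of the matroid obtained from $M$ by simultaneously adding each $t \in T$ freely to $N_\Delta(t)$ and then deleting $E(M)$, and to check that it agrees with the rank function of $\Delta(M)$ given in equation~(\ref{eq:induced}). By Lemma~\ref{lem:masoniterateprep}, the order in which we perform the principal extensions does not matter, so let $M'$ be the matroid obtained from $M$ by simultaneously adding each $t \in T$ freely to $F_t := N_\Delta(t)$; thus $E(M') = E(M) \cup T$ and $M' = M_n$ in the notation preceding Lemma~\ref{lem:masoniterateprep} (here $n = |T|$ and the elements $e_i$ are the elements of $T$). Deleting $E(M)$ from $M'$ gives a matroid on $T$, and I must show its rank function equals $r_{\Delta(M)}$.

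First I would apply Lemma~\ref{lem:masoniterateprep} with $X = \varnothing$ and $J$ equal to an arbitrary subset of $T$ (viewed as a subset of $[n]$); since $M' \delete E(M)$ has the same rank function on subsets of $T$ as $M'$ does, for $X \subseteq T$ we get
\begin{equation*}
  r_{M' \delete E(M)}(X) = r_{M'}(X) = \min_{Y \subseteq X}\{r_M(F_Y) + |X - Y|\},
\end{equation*}
where $F_Y = \bigcup_{t \in Y} F_t = \bigcup_{t \in Y} N_\Delta(t) = N_\Delta(Y)$. Hence
\begin{equation*}
  r_{M' \delete E(M)}(X) = \min_{Y \subseteq X}\{r_M(N_\Delta(Y)) + |X - Y|\},
\end{equation*}
which is exactly the right-hand side of equation~(\ref{eq:induced}), so $r_{M' \delete E(M)} = r_{\Delta(M)}$.

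Since two matroids on the same ground set with the same rank function are equal, this shows $M' \delete E(M) = \Delta(M)$, completing the proof. The one point that requires a small remark rather than being purely routine is the identity $F_Y = N_\Delta(Y)$: this is immediate from the definition of the neighborhood $N_\Delta(Y) = \bigcup_{t \in Y} N_\Delta(t)$, but I would state it explicitly since it is the hinge that makes the combinatorial formula from Lemma~\ref{lem:masoniterateprep} line up with the graph-theoretic formula~(\ref{eq:induced}). I do not anticipate a genuine obstacle here; the substantive work was already done in Lemma~\ref{lem:masoniterateprep}, and this lemma is essentially a dictionary translation between the ``iterated principal extension'' description and the ``induction by a bipartite graph'' description.
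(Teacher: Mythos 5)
Your proof is correct and follows exactly the paper's argument: apply Lemma~\ref{lem:masoniterateprep} with $X=\varnothing$, note that $F_Y = N_\Delta(Y)$, and compare the resulting formula with equation~(\ref{eq:induced}). The paper's proof is just a terser version of the same three lines.
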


\begin{proof}
  Write $T = \{e_1,e_2,\ldots,e_n\}$, set $F_i = N_\Delta(e_i)$, and
  define $M_n$ as above.  Comparing equation~(\ref{eq:comp1}) with $X
  = \varnothing$ to equation (\ref{eq:induced}) gives $M_n | T =
  \Delta(M)$.
\end{proof}

This completes the proofs of Theorems
\ref{thm:reductionrecastcomplete} and \ref{thm:recastcomplete}.  When
$M$ is free, Lemma \ref{lem:masoniteratepe} gives the geometric
description of transversal matroids, as in \cite[Proposition
11.2.26]{oxleybook}.  A simple variation on these ideas justifies the
remark by Mason \cite{mason_geom} that simultaneous principal
extensions may be realized by induction from a bipartite graph.

\subsection{Further properties of complete classes}\label{subsec:otherops}

The class of gammoids is complete.  As Ingleton
\cite{ingleton_related} observed, since transversal matroids are those
induced from free matroids via bipartite graphs, the class of gammoids
is the smallest complete class.

\begin{proposition}\label{prop:allgammoids}
  Every non-empty complete class of matroids contains all gammoids.
\end{proposition}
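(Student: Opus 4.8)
The plan is to show that any non-empty complete class $\mathcal{C}$ contains every gammoid, using the fact that gammoids are exactly the minors of transversal matroids, and that transversal matroids are exactly the matroids induced from free matroids by bipartite graphs. First I would note that $\mathcal{C}$, being non-empty, contains some matroid $M$; since $\mathcal{C}$ is closed under deletion, it contains the rank-$0$ matroid on the empty set, and since it is closed under principal extension (by Theorem~\ref{thm:recastcomplete}) and duality, I can build a free matroid of any rank inside $\mathcal{C}$. Concretely, starting from the empty matroid $U_{0,0}$, repeatedly adding an element freely to the ground set (a principal extension into $E(M)$) produces, one point at a time, the free matroid $U_{n,n}$ for every $n \ge 0$; hence every free matroid lies in $\mathcal{C}$.

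Next I would invoke closure under induction by directed graphs, which is part of Definition~\ref{def:complete}. Since $\mathcal{C}$ contains all free matroids and is closed under induction by directed graphs, it contains every matroid that can be induced from a free matroid by a directed graph. Equivalently --- using Lemma~\ref{lem:masoniteratepe} together with closure under deletion, or simply the remark that transversal matroids are those induced from free matroids by bipartite graphs, and that induction by bipartite graphs is a special case captured via Theorem~\ref{thm:ingpiff} and duality --- $\mathcal{C}$ contains every transversal matroid. Then, since $\mathcal{C}$ is closed under minors and a gammoid is by definition a minor of a transversal matroid, $\mathcal{C}$ contains all gammoids, completing the proof.

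The only real subtlety, and thus the step I would be most careful with, is producing the free matroids inside $\mathcal{C}$ from the bare assumption of non-emptiness: I must make sure that the single-element matroids and the empty matroid are reachable. Deleting all elements of any $M \in \mathcal{C}$ gives $U_{0,0} \in \mathcal{C}$, and then a principal extension $U_{0,0} +_{\varnothing} e$ gives the rank-$0$ matroid on one element (a loop); dualizing gives $U_{1,1}$, the free matroid on one element. From there, principal extension into the full ground set adds free points, yielding $U_{n,n}$ for all $n$. Everything after that is a direct appeal to the stated closure properties and to the characterization of transversal matroids and gammoids, so no further obstacle arises.

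\begin{proof}
  Let $\mathcal{C}$ be a non-empty complete class, and fix $M \in
  \mathcal{C}$.  Deleting every element of $M$ shows that the empty
  matroid $U_{0,0}$ is in $\mathcal{C}$.  The principal extension
  $U_{0,0} +_{\varnothing} e$ is the rank-$0$ matroid on $\{e\}$,
  whose dual is the free matroid $U_{1,1}$, so $U_{1,1} \in
  \mathcal{C}$.  For $n \ge 1$, adding an element freely to the ground
  set of $U_{n,n}$ yields $U_{n+1,n+1}$, so by induction every free
  matroid $U_{n,n}$ lies in $\mathcal{C}$.

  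By Theorem~\ref{thm:ingpiff}, since $\mathcal{C}$ is closed under
  induction by directed graphs and under duality, it is closed under
  induction by bipartite graphs.  Every transversal matroid is induced
  from a free matroid by a bipartite graph, so every transversal
  matroid lies in $\mathcal{C}$.  Finally, $\mathcal{C}$ is closed
  under minors, and every gammoid is a minor of a transversal matroid,
  so every gammoid lies in $\mathcal{C}$.
\end{proof}
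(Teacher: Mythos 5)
Your overall strategy is exactly the one the paper (following Ingleton) has in mind---build the free matroids inside the class, induce all transversal matroids from them, and take minors to get all gammoids---and the paper itself only sketches this, so filling in the details is the right exercise. However, the step you flagged as the ``only real subtlety'' is in fact where your proof goes wrong. A principal extension never increases rank: from equation~(\ref{eq:principal_ext}), $r_{M+_Y e}(E(M)\cup e)=\min\{r(M)+1,\,r(M)\}=r(M)$. So ``adding an element freely to the ground set of $U_{n,n}$'' produces the \emph{free extension} $U_{n,n+1}$ (a rank-$n$ uniform matroid on $n+1$ points), not $U_{n+1,n+1}$, and your induction stalls at rank $1$. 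The repair is immediate with tools you already have: since a complete class is closed under direct sums, $U_{n,n}=U_{1,1}\oplus\cdots\oplus U_{1,1}\in\mathcal{C}$ once $U_{1,1}\in\mathcal{C}$ (your derivation of $U_{1,1}$ from a loop by duality is fine); alternatively, at each stage adjoin a loop via $+_{\varnothing}$ and dualize.

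A second, smaller slip: you invoke Theorem~\ref{thm:ingpiff} in the wrong direction. That theorem says closure under \emph{bipartite} induction plus duality implies closure under \emph{directed} induction; you need the converse. The converse is the remark following that theorem (bipartite induction is directed induction followed by restriction to $T$, so it comes for free from directed induction plus minor-closure), so the fact is available---just cite the right justification. Cleaner still, given the machinery of Section~\ref{sec:complete}, is to bypass induction entirely: by Lemma~\ref{lem:masoniteratepe} every transversal matroid is obtained from a free matroid by simultaneous principal extensions followed by a deletion, so Theorem~\ref{thm:recastcomplete} alone (minors, duals, direct sums, principal extension) already yields all transversal matroids and hence all gammoids. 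With the $U_{n,n}$ step corrected and the citation fixed, your argument is complete.
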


Other examples of complete classes of matroids include:
$\mathcal{BO}$, $\mathcal{SBO}$, $k$-$\mathcal{BO}$ (treated in
Section~\ref{sec:kbo}); matroids representable over fields of a given
characteristic (see \cite{piff_welsh}); and the class of matroids with
no $M(K_4)$-minor (see \cite{sims_complete}).

Complete classes are closed under all operations that arise by
combining those under which they are already known to be closed.  For
example, since the matroid union, $M \vee N$, is obtained by induction
from the direct sum $M \oplus N$ by a certain bipartite graph (see
\cite[Theorem 11.3.1]{oxleybook}), complete classes are closed under
matroid union.  The same holds for the free product and all principal
sums (which are special matroid unions; see
\cite{bonin_kung_semidirect}).  Closure under parallel connections
follows from the description Mason \cite{mason_geom} gave of this
operation, which we recall.  Let $M$ and $N$ be matroids with $E(M)
\cap E(N) = \{p\}$.  To obtain their parallel connection, $P(M,N)$, in
$N$ replace $p$ by $p_N$, giving $N'$, where $p_N\not \in E(M) \cup
E(N)$; then $P(M, N)$ is $$\big((M \oplus N') +_{\{p, p_N\}} e\big)
\contract e \delete p_N.$$ This applies even if $p$ is a loop or
coloop of either $M$ or $N$.  It now follows that complete classes are
also closed under series connection (the dual of parallel connection)
and $2$-sums.

While they play no role in this paper, we conclude this section with
two observations.

Recall that a $2$-connected matroid is not $3$-connected if and only
if it is a $2$-sum of two of its proper minors.  Thus, the results
above imply that any excluded minor for a complete class of matroids
must be $3$-connected.

Second, we note that in Theorem~\ref{thm:recastcomplete}, we may
replace closure under principal extensions by closure under principal
extensions into sets of at most two elements.  This result, which is
used in \cite{piff_welsh} and \cite{sims_complete}, follows by
repeatedly applying Lemma~\ref{lem:2willdo}, whose straightforward
proof we omit.

\begin{lemma}\label{lem:2willdo}
  Let $M$ be a matroid with rank function $r$.  Let $F \subseteq E(M)$
  and $G \subseteq F$, and let $e_1$ and $e_2$ be points not in
  $E(M)$.  Then
  $$M +_F e_1 = ((M +_G e_2) +_{(F-G) \cup e_2} e_1)\delete e_2.$$
\end{lemma}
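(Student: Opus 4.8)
The plan is to verify the rank-function identity in Lemma~\ref{lem:2willdo} directly, using the formula~(\ref{eq:principal_ext}) for the rank function of a principal extension, and to check it on each subset of the ground set $E(M)\cup\{e_1\}$ of the claimed matroid $M+_F e_1$. Write $N = ((M +_G e_2) +_{(F-G)\cup e_2} e_1)\delete e_2$; both $N$ and $M+_F e_1$ have ground set $E(M)\cup\{e_1\}$, so it suffices to show $r_N(Z) = r_{M+_F e_1}(Z)$ for every $Z\subseteq E(M)\cup\{e_1\}$.

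First I would dispose of the easy case $e_1\notin Z$: then $Z\subseteq E(M)$, and deleting $e_2$ and noting that a principal extension does not change the rank of sets avoiding the new point gives $r_N(Z) = r_{(M+_G e_2)}(Z) = r(Z) = r_{M+_F e_1}(Z)$. So the substance is the case $Z = X\cup e_1$ with $X\subseteq E(M)$. On the one hand, $r_{M+_F e_1}(X\cup e_1) = \min\{r(X)+1,\ r(X\cup F)\}$ by~(\ref{eq:principal_ext}). On the other hand, deletion of $e_2$ means $r_N(X\cup e_1) = r_{(M+_G e_2)+_{(F-G)\cup e_2} e_1}(X\cup e_1)$, and applying~(\ref{eq:principal_ext}) to this outer principal extension (whose ground matroid is $M' := M+_G e_2$, with the new point added freely to $(F-G)\cup e_2$) gives
\begin{equation*}
  r_N(X\cup e_1) = \min\bigl\{\, r_{M'}(X) + 1,\ r_{M'}\bigl(X\cup (F-G)\cup e_2\bigr)\,\bigr\}.
\end{equation*}
Now $r_{M'}(X) = r(X)$, and $r_{M'}(X\cup(F-G)\cup e_2)$ is computed from~(\ref{eq:principal_ext}) applied to $M' = M+_G e_2$: since $G\subseteq F$, we have $G\subseteq X\cup(F-G)$, hence $r(X\cup(F-G)\cup G) = r(X\cup F)$, and so $r_{M'}(X\cup(F-G)\cup e_2) = \min\{r(X\cup(F-G))+1,\ r(X\cup F)\}$. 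Substituting back yields
\begin{equation*}
  r_N(X\cup e_1) = \min\bigl\{\, r(X)+1,\ r(X\cup(F-G))+1,\ r(X\cup F)\,\bigr\},
\end{equation*}
and since $r(X\cup(F-G)) \ge r(X)$ the middle term is redundant, leaving exactly $\min\{r(X)+1,\ r(X\cup F)\}$, which matches $r_{M+_F e_1}(X\cup e_1)$.

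There is essentially no obstacle here beyond bookkeeping; the one point that needs a moment's care is making sure the intermediate principal extensions are well-defined (i.e.\ that $(F-G)\cup e_2 \subseteq E(M')$, which holds since $e_2\in E(M')$ and $F-G\subseteq E(M)\subseteq E(M')$) and that the deletion of $e_2$ is legitimate (it is, since $e_2\in E(M')\subseteq E(N')$ where $N' = (M+_G e_2)+_{(F-G)\cup e_2} e_1$). One should also remark, as the paper does, that iterating this lemma reduces an arbitrary principal extension $M +_F e$ to a sequence of principal extensions into sets of size at most two: peel off one element of $F$ at a time via the identity, each step replacing a principal extension into a $j$-element set by principal extensions into sets of sizes $1$ (wait—into size $2$ and size $j-1$), then recurse. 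Since this reduction is routine given the lemma, I would state it in a sentence and omit the details, exactly as the excerpt proposes.
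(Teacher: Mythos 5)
Your computation is correct, and since the paper explicitly omits the proof of Lemma~\ref{lem:2willdo} as ``straightforward,'' your direct verification via equation~(\ref{eq:principal_ext}) --- reducing both sides to $\min\{r(X)+1,\ r(X\cup(F-G))+1,\ r(X\cup F)\}$ and discarding the redundant middle term --- is exactly the intended argument. The only blemish is the garbled aside in your closing remark about the iteration (one takes $|F-G|=1$ so that each step extends into a set of size at most two and recurses on $G$), but that remark is outside the lemma itself and the paper likewise leaves it to the reader.
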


\section{$k$-$\mathcal{BO}$ is a complete class}\label{sec:kbo}

The main result of this section is the following theorem.

\begin{theorem}\label{thm:kbocomplete}
  For a fixed $k \ge 1$, the class of $k$-base-orderable matroids is
  complete.
\end{theorem}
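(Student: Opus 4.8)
By Theorem~\ref{thm:recastcomplete}, it suffices to show that $k$-$\mathcal{BO}$ is closed under minors, duals, direct sums, and principal extension. The first three of these are already handled by Proposition~\ref{prop:kbobasics}: parts (4) and (5) give closure under direct sums, duals, and single-element deletions and contractions, and hence under arbitrary minors. So the entire burden of the proof is to show that if $M$ is $k$-base-orderable, then so is the principal extension $M +_Y e$ for any $Y \subseteq E(M)$. I would isolate this as the one substantive lemma and prove Theorem~\ref{thm:kbocomplete} as an immediate corollary of it together with Proposition~\ref{prop:kbobasics} and Theorem~\ref{thm:recastcomplete}.

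To prove that $M' = M +_Y e$ is $k$-base-orderable, fix two bases $B_1', B_2'$ of $M'$ and seek a $k$-exchange-ordering. I would first dispose of the case $e \notin B_1' \cup B_2'$: then $B_1', B_2'$ are bases of $M' \delete e$, which is $M$ (since deleting a principally-added point returns the original matroid), so a $k$-exchange-ordering in $M$ works in $M'$ because independence of subsets of $E(M)$ is unchanged. Next, by part (2) of Proposition~\ref{prop:kbobasics} we may assume $e \in B_1'$; and if $e$ lies in both $B_1'$ and $B_2'$, then by the usual device (cf.\ the proof of part (5) and of Lemma~\ref{lem:kbodorc}), it suffices to find a $k$-exchange-ordering fixing $e$, which amounts to finding one for $B_1' - e$ and $B_2' - e$ in $M' \contract e = M$. (One must check $M' \contract e = M$ when $e$ is not a loop, using equation~(\ref{eq:principal_ext}); if $r(Y) = 0$ then $e$ is a loop and the claim is trivial.) So the crux is the case $e \in B_1' - B_2'$.

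In that case write $B_1' = (B_1 - \{f\}) \cup \{e\}$-type data: since $e \in B_1'$ and $r_{M+_Y e}(B_1') = r(M)$, equation~(\ref{eq:principal_ext}) forces $B_1 := (B_1' - e) \cup \{f\}$ to be a basis of $M$ for a suitable $f \in \cl_M(Y) - (B_1' - e)$; more precisely $B_1' - e$ is independent in $M$ and spans a hyperplane missing $Y$, so there is $f \in Y$ with $(B_1'-e)\cup f$ a basis of $M$ — this is exactly the sense in which $e$ is a ``clone'' of a point of $Y$. Meanwhile $B_2'$ is already a basis of $M$ (as $e \notin B_2'$). Apply $k$-base-orderability of $M$ to the pair $B_1 = (B_1' - e) \cup \{f\}$ and $B_2 := B_2'$, getting a $k$-exchange-ordering $\sigma : B_1 \to B_2$. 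Now transport $\sigma$ to a bijection $\sigma' : B_1' \to B_2'$ by setting $\sigma'(x) = \sigma(x)$ for $x \in B_1' - e$ and $\sigma'(e) = \sigma(f)$. The task is then to verify, for every $X \subseteq B_1'$ with $|X| \le k$, that $(B_1' - X) \cup \sigma'(X)$ and $(B_2' - \sigma'(X)) \cup X$ are bases of $M'$. When $e \notin X$ this follows from the corresponding property of $\sigma$ on $B_1$ together with the rank formula~(\ref{eq:principal_ext}) and the fact that $e \in \cl_{M'}(Y) \supseteq \cl_{M'}(\cdot)$-reasoning shows replacing $e$ by $f$ (or vice versa) inside these sets does not change rank, since $f \in Y$. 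When $e \in X$, write $X = (X - e) \cup \{e\}$ and compare with $X^\circ := (X - e) \cup \{f\} \subseteq B_1$: one checks $(B_1' - X) \cup \sigma'(X) = (B_1 - X^\circ) \cup \sigma(X^\circ)$ as sets (the $e$ on the left is removed and $f$ on the right is absent, or conversely), so this is a basis of $M$, hence of $M'$; and for the second condition, $(B_2' - \sigma'(X)) \cup X = \big((B_2 - \sigma(X^\circ)) \cup X^\circ\big)$ with $f$ replaced by $e$, which is a basis of $M$, and replacing $f \in Y$ by the clone $e$ keeps it a basis of $M'$ by~(\ref{eq:principal_ext}).

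The main obstacle is the bookkeeping in this last step: making the substitution ``$e \leftrightarrow f$'' rigorous requires knowing precisely which basis-exchange statements in $M'$ reduce to statements in $M$, and this hinges on two facts — that $f$ can be chosen in $Y$ (so that $e$ and $f$ play interchangeable roles relative to the flat $\cl_{M'}(Y)$), and that for a set $Z \subseteq E(M)$ with $f \notin Z$, the set $Z \cup e$ is a basis of $M'$ iff $Z \cup f$ is a basis of $M$ iff $Z$ spans a hyperplane of $M$ meeting $Y$, all of which are read off from equation~(\ref{eq:principal_ext}). I would state a short preliminary claim capturing exactly this equivalence and then the verification becomes a routine case check on whether $e \in X$ and whether the relevant element of $Y$ already lies in the set under consideration. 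Once that claim is in hand the proof is essentially mechanical, and the cardinality restriction $|X| \le k$ plays no special role beyond being inherited verbatim from $\sigma$, which is why the same argument simultaneously covers $\mathcal{BO}$ ($k=1$) and, in the limit, $\mathcal{SBO}$.
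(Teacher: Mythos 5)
The overall architecture is right and matches the paper's: reduce via Theorem~\ref{thm:recastcomplete} to closure under principal extension (Proposition~\ref{prop:kbobasics} covering minors, duals, and direct sums), then argue by cases on where $e$ sits relative to $B_1'$ and $B_2'$. Your treatment of the cases $e \notin B_1' \cup B_2'$ and $e \in B_1' - B_2'$ is essentially the paper's, and the bookkeeping you describe for the latter does go through.

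The genuine gap is the case $e \in B_1' \cap B_2'$, which you dispose of via the identity $M' \contract e = M$. That identity is false: for $M' = M +_Y e$ one has $M' \delete e = M$, whereas $M' \contract e$ is the \emph{principal truncation} of $M$ into $Y$, a matroid of rank $r(M)-1$ (e.g., for $M = U_{2,3}$ and $Y = \{a\}$, the element $e$ becomes parallel to $a$ and $M'\contract e$ has rank $1$). You even have the dichotomy backwards: the identity holds exactly when $e$ \emph{is} a loop. Nor can you repair this by merely asserting that $M' \contract e$ is $k$-base-orderable, since principal truncation is itself a composite of principal extension and contraction, so that assertion is part of what the theorem is proving --- the argument would be circular. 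This case is where the paper's Proposition~\ref{prop:kboPrExt} does its real work: one takes bases $B_{1x} = (B_1 - e)\cup x$ and $B_{2y} = (B_2 - e)\cup y$ of $M$ with $x,y \in F$ and a $k$-exchange-ordering $\sigma \from B_{1x} \to B_{2y}$; when $\sigma(x) \ne y$ one must modify $\sigma$ nontrivially (sending $\sigma^{-1}(y)$ to $\sigma(x)$ and fixing $e$) and then verify the exchange conditions by a circuit-elimination argument. Your proposal contains no substitute for that step, so as written it does not establish closure under principal extension.
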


This theorem implies the known results that $\mathcal{BO}$ and
$\mathcal{SBO}$ are complete, and combining it with Proposition
\ref{prop:allgammoids} gives another proof that gammoids are strongly
base-orderable.  (For a short direct proof, see \cite[Theorem
42.11]{schrijverB}.)  Also, $k$-$\mathcal{BO}$ is closed under all the
operations discussed in Section \ref{subsec:otherops}.  In particular,
we recover the result of Brualdi \cite{brualdi_induced} that
$\mathcal{BO}$ is closed under induction by directed graphs.

By Proposition \ref{prop:kbobasics}, the class $k$-$\mathcal{BO}$ is
closed under direct sums, duals, and minors.  To complete the proof of
Theorem \ref{thm:kbocomplete}, we address closure under principal
extensions in Proposition \ref{prop:kboPrExt}.  The following
well-known result (see, e.g., \cite[Problem 7.2.4(a)]{oxleybook})
gives the bases of a principal extension.

\begin{lemma}\label{lem:basesofprext}
  Let $M$ be a matroid, with $\mathcal{B}(M)$ its set of bases, let
  $F$ be a flat of $M$, and let $e$ be an element not in $E(M)$.  The
  set of bases of $M +_{F} e$ is
  $$\mathcal{B}(M) \cup \{(B - \sg{f}) \cup
  \sg{e} : B \in \mathcal{B}(M) \text{ and } f \in B \cap F\}.$$
\end{lemma}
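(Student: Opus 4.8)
The plan is to read off the bases of $M +_F e$ directly from the rank function in equation~(\ref{eq:principal_ext}). First I would note that, taking $X = E(M)$ there, $r_{M +_F e}(E(M) \cup e) = \min\{r(M) + 1,\, r(E(M) \cup F)\} = r(M)$, so $r(M +_F e) = r(M)$ and the bases of $M +_F e$ are exactly its independent sets of size $r(M)$. I would then analyse a candidate basis $B' \subseteq E(M) \cup e$ according to whether it contains $e$.

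If $e \notin B'$, then $B' \subseteq E(M)$ and $r_{M +_F e}(B') = r_M(B')$ by the rank function, so $B'$ is a basis of $M +_F e$ if and only if it is a basis of $M$; this yields the summand $\mathcal{B}(M)$.

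If $e \in B'$, I would write $B' = X \cup e$ with $X \subseteq E(M)$. Then $B'$ is a basis exactly when $|X| = r(M) - 1$ and $r_{M +_F e}(X \cup e) = r(M)$; by equation~(\ref{eq:principal_ext}) the latter says $\min\{r_M(X) + 1,\, r_M(X \cup F)\} = r(M)$, and since both terms are at most $r(M)$ (using $|X| = r(M)-1$ for the first), this is equivalent to $X$ being independent in $M$ with $X \cup F$ spanning $M$. It then remains to check that an independent set $X$ of size $r(M) - 1$ with $\cl_M(X \cup F) = E(M)$ is precisely a set of the form $B - f$ for some $B \in \mathcal{B}(M)$ and $f \in B \cap F$. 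One direction is immediate: if $X = B - f$ with $f \in B \cap F$, then $X$ is independent of the right size and $X \cup f = B$ spans $M$. For the converse, since $X \cup F$ spans $M$ but $X$ does not, $F$ is not contained in $\cl_M(X)$; picking any $f \in F - \cl_M(X)$, the set $B := X \cup f$ is independent of size $r(M)$, hence a basis, with $f \in B \cap F$ and $X = B - f$.

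I do not expect a real obstacle here: the whole argument is a matter of unwinding equation~(\ref{eq:principal_ext}). The one place that calls for a moment's care is the last equivalence --- specifically, observing that $F \not\subseteq \cl_M(X)$ is exactly what makes it possible to choose the element $f$ to re-insert --- together with the degenerate situation in which $F$ is the flat of loops of $M$ (so $e$ is a loop of $M +_F e$), where the second summand is empty because no basis of $M$ meets $F$, in agreement with the stated formula.
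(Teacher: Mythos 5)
Your argument is correct and complete. Note that the paper itself gives no proof of this lemma---it is cited as a known exercise (Oxley, Problem 7.2.4(a))---so there is no in-paper argument to compare against; your direct unwinding of the rank formula (\ref{eq:principal_ext}) is exactly the standard route, and you handle the two cases ($e\notin B'$, $e\in B'$), the equivalence with ``$X$ independent of size $r(M)-1$ and $X\cup F$ spanning,'' and the degenerate case where $F$ is the set of loops, all correctly.
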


\begin{proposition}\label{prop:kboPrExt}
  If $M$ is $k$-base-orderable, then so is any principal extension $M
  +_{F} e$.
\end{proposition}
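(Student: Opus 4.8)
The plan is to take bases $B_1$ and $B_2$ of $M +_F e$ and produce a $k$-exchange-ordering, by reducing to a $k$-exchange-ordering that $M$ is assumed to have. By Lemma~\ref{lem:basesofprext}, each basis of $M +_F e$ is either a basis of $M$ or of the form $(B - \sg{f}) \cup \sg{e}$ for a basis $B$ of $M$ and some $f \in B \cap F$. There are thus three cases according to whether $e$ lies in neither, one, or both of $B_1, B_2$. The case $e \notin B_1 \cup B_2$ is trivial: $B_1$ and $B_2$ are bases of $M$, and a $k$-exchange-ordering for them in $M$ remains one in $M +_F e$ (since adding a point freely only adds bases, every exchange that was a basis in $M$ still is). The case $e \in B_1 \cap B_2$ should also be quick: writing $B_1 = (A_1 - \sg{f_1}) \cup \sg{e}$ and $B_2 = (A_2 - \sg{f_2}) \cup \sg{e}$, one takes a $k$-exchange-ordering $\tau \from A_1 \to A_2$ for $M$ and modifies it near $f_1, f_2, e$; the point fixed by any $k$-exchange-ordering ($\sigma(x)=x$ on the intersection, Proposition~\ref{prop:kbobasics}(3a)) makes the bookkeeping manageable.

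The substantive case is $e \in B_1 - B_2$ (the case $e \in B_2 - B_1$ follows by Proposition~\ref{prop:kbobasics}(2)). Here $B_1 = (A - \sg{f}) \cup \sg{e}$ for some basis $A$ of $M$ and some $f \in A \cap F$, while $B_2$ is a basis of $M$ with $e \notin B_2$. I would first choose $A$ cleverly: since $e$ can be thought of as a ``free clone'' inside $F$, one wants $A$ to agree with $B_2$ as much as possible. Concretely, since $B_2 \cup e$ spans and $r_{M+_Fe}(B_2 \cup e) = r(M)$ (as $e \in \cl(F) \subseteq \cl(E(M))$... — more carefully, $B_2 \cup e$ is dependent in $M+_Fe$), there is $g \in B_2$ with $(B_2 - \sg{g}) \cup \sg{e}$ a basis of $M+_Fe$, equivalently $B_2 - \sg{g}$ is independent in $M$ and $g \in \cl_M(B_2 - \sg{g}) \cap \cl_M(F)$, i.e. $g \in F$ (replacing $F$ by its closure). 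Taking $A := B_2$ and $f := g$, we get $B_1 = (B_2 - \sg{g}) \cup \sg{e}$, so $B_1$ and $B_2$ differ only in $\{e,g\}$. Wait — this requires $B_1$ to actually have that form, which we cannot assume; so instead I would argue the other way: given $B_1 = (A - \sg f)\cup\sg e$, take a $k$-exchange-ordering $\rho$ for $A$ and $B_2$ in $M$, with $\rho(f) = h \in B_2$. Then define $\sigma \from B_1 \to B_2$ by $\sigma(e) = h$ and $\sigma(x) = \rho(x)$ for $x \in A - \sg f$.

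It remains to verify the two families of exchange conditions for $\sigma$, for every $X \subseteq B_1$ with $|X| \le k$. Split on whether $e \in X$. If $e \notin X$, then $X \subseteq A - \sg f$, and $(B_1 - X) \cup \sigma(X) = ((A-\sg f)\cup \sg e) - X) \cup \rho(X) = (((A - X) \cup \rho(X)) - \sg f) \cup \sg e$; since $(A - X) \cup \rho(X)$ is a basis of $M$ containing $f$ — here I need $f \notin \rho(X)$, which holds because $\rho(X) \subseteq B_2$ and $f \in A$, and if $f \in A \cap B_2$ then $\rho(f) = f$ forces $h = f$, a degenerate subcase to note — Lemma~\ref{lem:basesofprext} applied with the flat $F$ (note $f \in F$) shows the result is a basis of $M+_Fe$. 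For condition (**), $(B_2 - \sigma(X)) \cup X = (B_2 - \rho(X)) \cup X$, a basis of $M$, hence of $M+_Fe$. If $e \in X$, write $X = X' \cup \sg e$ with $X' \subseteq A - \sg f$ and $|X' \cup \sg f| \le k$ as well. Then $\sigma(X) = \rho(X') \cup \sg h = \rho(X' \cup \sg f)$, so $(B_1 - X)\cup \sigma(X) = ((A - \sg f)\cup\sg e - X' - \sg e)\cup\rho(X'\cup\sg f) = (A - (X'\cup\sg f))\cup \rho(X'\cup \sg f)$, a basis of $M$ (using that $\rho$ is a $k$-exchange-ordering and $|X'\cup\sg f|\le k$), hence a basis of $M+_Fe$; and $(B_2 - \sigma(X))\cup X = (B_2 - \rho(X'\cup\sg f))\cup X'\cup\sg e = (((B_2 - \rho(X'\cup\sg f))\cup(X'\cup\sg f)) - \sg f)\cup \sg e$, where $(B_2 - \rho(X'\cup\sg f))\cup(X'\cup\sg f)$ is a basis of $M$ containing $f$, so Lemma~\ref{lem:basesofprext} again gives a basis of $M+_Fe$.

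The main obstacle I anticipate is not any single hard step but the careful handling of degeneracies: when $e$ or $g$ or $h$ lies in $B_1 \cap B_2$, when $f$ happens to lie in $B_2$, and making sure the chosen $\rho$ interacts correctly with $F$ (one must replace $F$ by $\cl(F)$ at the outset, harmless since $M +_F e = M +_{\cl(F)} e$). I would organize the write-up so that the key identity ``$(C - \sg f) \cup \sg e$ is a basis of $M +_F e$ whenever $C$ is a basis of $M$ with $f \in C \cap F$'' (immediate from Lemma~\ref{lem:basesofprext}) is invoked uniformly, and so that the two cases $e \notin X$ and $e \in X$ are dispatched by the same algebraic manipulation of the symmetric difference.
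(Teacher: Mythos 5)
Your cases $e \notin B_1 \cup B_2$ and $e \in B_1 - B_2$ (hence $e\in B_2-B_1$ by duality of the ordering) are handled correctly, and essentially as in the paper. But you have misidentified where the difficulty lies: the case $e \in B_1 \cap B_2$, which you dismiss as quick bookkeeping around the fixed points of the ordering, is the one that occupies most of the paper's proof and is the only place a genuinely new idea is needed. Write $B_1 = (B_{1x} - \sg{x}) \cup \sg{e}$ and $B_2 = (B_{2y} - \sg{y}) \cup \sg{e}$ with $x, y \in F$ and $B_{1x}, B_{2y}$ bases of $M$, and take a $k$-exchange-ordering $\sigma \from B_{1x} \to B_{2y}$. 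If $\sigma(x) = y$, the obvious modification works. But in general $\sigma(x) = b_1 \ne y$ and $\sigma(a_2) = y$ for some $a_2 \in B_1$, and the rerouted bijection $\tau$ (fixing $e$ and sending $a_2 \mapsto b_1$) must be verified on sets $X$ with $a_2 \in X$, $e \notin X$, and $|X| = k$. The natural certificate that $(B_1 - X) \cup \tau(X)$ is a basis of $M +_F e$ would be to exchange $X \cup \sg{x}$ between $B_{1x}$ and $B_{2y}$ and then swap $y \in F$ for $e$; this works for $|X| < k$, but when $|X| = k$ the set $X \cup \sg{x}$ has $k+1$ elements and the $k$-exchange-ordering $\sigma$ tells you nothing about it. (One also cannot in general choose $x = y$ so as to force $\sigma(x)=y$: the flat $F$ may meet $\cl_M(B_1 - e)$ and $\cl_M(B_2 - e)$ in sets that cover $F$ without a common complement.)

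This boundary case is the crux of the proposition, not a degeneracy. The paper resolves it by contradiction with circuit elimination: if $(B_1 - X) \cup \tau(X)$ were not a basis of $M+_Fe$, then both $\sg{x} \cup ((B_1 - X - \sg{e}) \cup \tau(X))$ and $\sg{y} \cup ((B_1 - X - \sg{e}) \cup \tau(X))$ would be dependent $r$-sets in $M$, each containing a circuit through $b_1$ (this uses two exchanges of size at most $k$ that $\sigma$ does supply), and eliminating $b_1$ yields a circuit inside a set that $\sigma$ certifies to be a basis. Until you supply this argument, or some substitute for it, your proof is incomplete precisely in the case that carries the content of the proposition.
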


\begin{proof}
  Let $B_1$ and $B_2$ be bases of $M +_{F} e$.  If $e \not\in B_1\cup
  B_2$, then there is a $k$-exchange-ordering $\sigma \from B_1 \to
  B_2$ by assumption.

  If $e \in B_1- B_2$, then there is a basis $(B_1 - \sg{e}) \cup
  \sg{x}$ of $M$ with $x \in F$, and a $k$-exchange-ordering $\sigma
  \from (B_1 - \sg{e}) \cup \sg{x} \to B_2$.  Define $\tau \from B_1
  \to B_2$ by
  \begin{equation*}
    \tau(z) =
    \begin{cases}
      \sigma(z) & \text{if} \ z \ne e,\\
      \sigma(x) & \text{if} \ z = e.\\
    \end{cases}
  \end{equation*}
  Since $e$ can replace $x$ in any basis of $M$ to yield a basis of $M
  +_{F} e$, it follows that $\tau$ is a $k$-exchange-ordering for $M
  +_{F} e$.  The case with $e \in B_2- B_1$ follows by symmetry.

  Now suppose $e \in B_1\cap B_2$.  There are bases $B_{1x}=(B_1 -
  \sg{e}) \cup \sg{x}$ and $B_{2y}=(B_2 - \sg{e}) \cup \sg{y}$ of $M$
  with $x, y \in F$, and a $k$-exchange-ordering $\sigma \from B_{1x}
  \to B_{2y}$.  If $\sigma(x) = y$, then the bijection $\tau \from B_1
  \to B_2$ given by
  \begin{equation*}
    \tau(z) =
    \begin{cases}
      \sigma(z) & \text{if} \ z \ne e,\\
      e & \text{if} \ z = e,\\
    \end{cases}
  \end{equation*}
  is a $k$-exchange-ordering for $M +_{F} e$.

  The rest of the proof treats the case with $\sigma(x) \ne y$ and
  uses the following notation: $B_1 = \{e, a_2, a_3, \dots, a_r\}$ and
  $B_2 = \{e, b_1, b_3, \dots, b_r\}$, and $\sigma$ is given by
  $\sigma(x) = b_1$, $\sigma(a_2) = y$, and $\sigma(a_j) = b_j$ for $j
  \ge 3$; we abbreviate this by
  $$\sigma = \left(
    \begin{array}{@{\;}c@{\ }c@{\ }c@{\ }c@{\ }c@{\;}}
      x&a_2&a_3&\dots&a_r\\
      b_1&y&b_3&\dots&b_r
    \end{array}\right).$$
  Define $\tau \from B_1 \to B_2$ to fix $e$, map $a_2$ to $b_1$, and
  agree with $\sigma$ on $a_3,\ldots,a_r$, that is,
  $$\tau = \left(
    \begin{array}{@{\;}c@{\ }c@{\ }c@{\ }c@{\ }c@{\;}}
      e&a_2&a_3&\dots&a_r\\
      e&b_1&b_3&\dots&b_r
    \end{array}\right).$$
  We claim that $\tau$ is a $k$-exchange-ordering for $M +_{F} e$.
  Let $X \subseteq B_1$ with $|X| \le k$.  The case with $X\cap
  \{e,a_2\}= \varnothing$ is immediate. If both $e\in X$ and $a_2 \in
  X$, then exchanging $(X-e)\cup x$ and $\sigma((X-e)\cup x)$ in
  $B_{1x}$ and $B_{2y}$, and then using Lemma \ref{lem:basesofprext},
  shows that both $(B_1 - X) \cup \tau(X)$ and $(B_2 - \tau(X)) \cup
  X$ are bases of $M +_{F} e$.  We get the same conclusion if $e\in X$
  and $a_2 \not\in X$ by exchanging $X-x$ and $\sigma(X-x)$ in
  $B_{1x}$ and $B_{2y}$, and then using Lemma \ref{lem:basesofprext}
  (note the inequality $|X|\leq k$ in Definition \ref{def:kbo}).

  Finally, assume that $e \not\in X$ and $a_2 \in X$.  By relabeling,
  we may assume that $X$ is $\{a_2, a_3, \dots, a_i\}$, so $i \le
  k+1$.  First assume $x=b_1$.  Since $\sigma$ is a
  $k$-exchange-ordering, $\{x, y, b_3, \dots, b_i, a_{i+1}, \dots,
  a_r\}$ must be a basis of $M$, and we may replace $y$ with $e$ to
  get that $\{e, x, b_3, \dots, b_i, a_{i+1}, \dots, a_r\}$ is a basis
  of $M +_{F} e$.  To show that $\{e, a_2, \dots, a_i, b_{i+1}, \dots,
  b_r\}$ is also a basis of $M +_{F} e$, note that
  $$(B_{2y} - \sigma(X)) \cup X =
  \{b_1, a_2, \dots, a_i, b_{i+1}, \dots, b_r\} = \{x, a_2, \dots,
  a_i, b_{i+1}, \dots, b_r\}$$ is a basis of $M$, and then replace $x$
  with $e$.  The case where $y = a_2$ is similar, though not
  symmetric, to that of $x = b_1$.  To provide the details, assume $y
  = a_2$.  Now let $X' = (X-\sg{a_2}) \cup \sg{x}$.  Since $\sigma$ is
  a $k$-exchange-ordering, we have that
  $$(B_{1x} - X')\cup \sigma(X') = \{b_1, y, b_3, \dots, b_i, a_{i+1},
  \dots, a_r\}$$ is a basis of $M$, and replacing $y$ with $e$ gives
  that $\{e, b_1, b_3, \dots, b_i, a_{i+1}, \dots, a_r\}$ is a basis
  of $M +_F e$.  We also have that the following set is a basis of $M
  +_F e$:
  $$(B_{2y} - \sigma(X'))\cup X' = \{x, y, a_3, \dots, a_i, b_{i+1},
  \dots, b_r\}.$$ Thus replacing $x$ by $e$ and recalling that $y=a_2$
  gives that $\{e, a_2, \dots, a_i, b_{i+1}, \dots, b_r\}$ is a basis
  of $M$.

  So we assume that $x \ne b_1$ and $y \ne a_2$ for the rest of the
  proof.  We next show that $$(B_1 - X) \cup \tau(X) = \{e, b_1, b_3,
  \dots, b_i, a_{i+1}, \dots, a_r\}$$ is a basis of $M +_{F} e$.
  Assume the contrary.  Then, since $x \ne b_1$ 
 and
  $\sigma$ is a $k$-exchange-ordering, it follows that both $$\{x,
  b_1, b_3, \dots, b_i, a_{i+1}, \dots, a_r\} \text{ and } \{y, b_1,
  b_3, \dots, b_i, a_{i+1}, \dots, a_r\}$$ have size $r$, and, hence,
  they are dependent in $M$.  Therefore they contain circuits, say
  $C_x$ and $C_y$, respectively. Since $\sigma$ is a
  $k$-exchange-ordering, it follows that both $$B' = \{x, y, b_3,
  \dots, b_i, a_{i+1}, \dots, a_r\} \text{ and } \{b_1, a_2, b_3,
  \dots, b_i, a_{i+1}, \dots, a_r\}$$ are bases of $M$.  Therefore, it
  must be that $\{x, b_1\} \subseteq C_x$ and $\{y, b_1\} \subseteq
  C_y$.  By circuit elimination, there is a circuit $C$ of $M$ such
  that $C \subseteq (C_x \cup C_y) - b_1$.  Since this last set is a
  subset of $B'$, we have reached a contradiction.  Thus, $(B_1 - X)
  \cup \tau(X)$ is a basis of $M +_{F} e$. A similar argument shows
  that $(B_2 - \tau(X)) \cup X$ is also a basis of $M +_{F} e$.
\end{proof}

We next point out that there are other complete classes along these
lines.

\begin{definition}
  Let $M$ be a matroid, and let $0 \le k \le r(M)$ and $0 \le l \le
  r(M)$ with $k+l>0$.  We say $M$ is \emph{$(k,l)$-base-orderable} if,
  given any two bases $B_1$ and $B_2$, there is a bijection $\sigma
  \from B_1 \to B_2$ so that for every $X \subseteq B_1$ with $|X| \le
  k$ or $|X| \ge r(M) - l$, the set $(B_1 - X) \cup \sigma(X)$ is a
  basis.
\end{definition}

It follows from the definition that for $k\ge 1$, a matroid is
$k$-base-orderable if and only if it is $(k,k)$-base-orderable.  It
also follows that a matroid $M$ is $(k,l)$-base-orderable if and only
if it is $(l,k)$-base-orderable.  Note that every matroid is
$(1,0)$-base-orderable by the bijective-exchange property.  However,
matroids do not in general satisfy a multiple bijective-exchange property.
For example, since $r(M(K_4)) = 3$ and $M(K_4)$ is not base-orderable, it
follows that it is not $(2,0)$-base-orderable.  It is easy to modify
the proofs in this section to show the following strengthening.

\begin{theorem}
  For fixed $k$ and $l$, the class of $(k,l)$-base-orderable matroids
  is complete.
\end{theorem}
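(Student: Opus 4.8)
The plan is to retrace the development of Section~\ref{sec:kbo}, replacing $k$-exchange-orderings by their two-parameter analogue. Call a bijection $\sigma\from B_1\to B_2$ between bases of a matroid $M$ a \emph{$(k,l)$-exchange-ordering} if $(B_1-X)\cup\sigma(X)$ is a basis of $M$ whenever $X\subseteq B_1$ satisfies $|X|\le k$ or $|X|\ge r(M)-l$; thus $M$ is $(k,l)$-base-orderable exactly when every pair of its bases admits such a bijection. Complementing inside a basis shows that $(B_1-X)\cup\sigma(X)$ is a basis for all $X$ with $|X|\ge r(M)-l$ if and only if $(B_2-\sigma(W))\cup W$ is a basis for all $W\subseteq B_1$ with $|W|\le l$; so a $(k,l)$-exchange-ordering is the same as a bijection $\sigma\from B_1\to B_2$ for which $(B_1-X)\cup\sigma(X)$ is a basis whenever $|X|\le k$ \emph{and} $(B_2-\sigma(W))\cup W$ is a basis whenever $|W|\le l$. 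From this form it is immediate that $\sigma$ is a $(k,l)$-exchange-ordering from $B_1$ to $B_2$ precisely when $\sigma^{-1}$ is an $(l,k)$-exchange-ordering from $B_2$ to $B_1$, re-proving that a matroid is $(k,l)$-base-orderable if and only if it is $(l,k)$-base-orderable. This symmetry is the one feature separating the argument from Section~\ref{sec:kbo}: there a single $k$-exchange-ordering carried both exchange directions at one cardinality bound, whereas here the ``$k$-side'' and the ``$l$-side'' have different budgets and must be tracked separately.

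With this vocabulary I would first redo Proposition~\ref{prop:kbobasics}. Parts (1)--(3) transfer verbatim; a $(k,l)$-exchange-ordering fixes every element of $B_1\cap B_2$ as soon as $l\ge 1$ (apply the ``$|W|\le l$'' requirement to a singleton), and the remaining case $l=0$, handled by splicing the given bijection, is routine. Closure under direct sums is immediate. For closure under duals, given bases $B_1^{*},B_2^{*}$ of $M^{*}$, use that $M$ is also $(l,k)$-base-orderable to choose an $(l,k)$-exchange-ordering $\rho$ between the bases $E(M)-B_2^{*}$ and $E(M)-B_1^{*}$ of $M$, and let $\sigma^{*}$ fix $B_1^{*}\cap B_2^{*}$ and agree with $\rho$ on $B_1^{*}-B_2^{*}$; a short computation with complements, just as in Proposition~\ref{prop:kbobasics}(5), shows that the ``$|X|\le k$'' requirement for $\sigma^{*}$ follows from the ``$|W|\le k$'' requirement for $\rho$ and the ``$|X|\ge r-l$'' requirement for $\sigma^{*}$ from the ``$|X|\le l$'' requirement for $\rho$. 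For minors it again suffices to treat a single deletion $M\delete x$, exactly as in Proposition~\ref{prop:kbobasics}.

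The substantive step is the analogue of Proposition~\ref{prop:kboPrExt}: if $M$ is $(k,l)$-base-orderable then so is every principal extension $M+_{F}e$. I would run the identical case analysis on how $e$ meets $B_1\cup B_2$ and build the identical bijections $\tau$, invoking Lemma~\ref{lem:basesofprext} in the same places. The point to verify is that the two halves of the conclusion decouple: showing that $(B_1-X)\cup\tau(X)$ is a basis (needed for $|X|\le k$) only ever uses sets of the form $(B_{1x}-X')\cup\sigma(X')$ with $|X'|\le k$, whereas showing that $(B_2-\tau(W))\cup W$ is a basis (needed for $|W|\le l$) is the mirror argument, obtained by running the former with $\sigma^{-1}$ in place of $\sigma$ and $(l,k)$ in place of $(k,l)$, and so only uses sets $(B_{2y}-\sigma(W'))\cup W'$ with $|W'|\le l$; in particular no step asks the ``short side'' to certify a set larger than its budget. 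Granting this, Theorem~\ref{thm:recastcomplete} gives that the class of $(k,l)$-base-orderable matroids is complete. The main obstacle is thus organizational rather than conceptual: one must resist quoting a single exchange-ordering when only one of its two exchange directions is genuinely available, and keep the $k$-side and the $l$-side of each argument separate, linking them solely through the $(k,l)\leftrightarrow(l,k)$ symmetry.
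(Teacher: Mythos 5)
Your proposal is correct and follows exactly the route the paper intends: the paper itself gives no proof beyond the remark that ``it is easy to modify the proofs in this section,'' and your plan is a faithful execution of that modification, with the right key observation that in Proposition~\ref{prop:kboPrExt} the $|X|\le k$ and $|W|\le l$ halves of each case decouple and are linked only through the $(k,l)\leftrightarrow(l,k)$ symmetry. The one detail worth spelling out is the $l=0$ edge case in the duality argument, where a $(k,0)$-exchange-ordering need not fix $B_1\cap B_2$; this is repaired by passing to a $(0,k)$-exchange-ordering for the reversed pair of bases (which does fix the intersection, by the singleton case of the $|W|\le k$ condition) and taking its inverse, which is again an instance of the symmetry you already invoke.
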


In a different direction, we close this section by showing that
$\mathcal{BO}$ is closed under circuit-hyperplane relaxation.  We do
not yet know whether the same holds for $\mathcal{SBO}$,
$k$-$\mathcal{BO}$, or all complete classes of matroids.

\begin{proposition}
  Let $M'$ be the relaxation of a matroid $M$ by a circuit-hyperplane
  $X$ of $M$.  If $M$ is base-orderable, then so is $M'$.
\end{proposition}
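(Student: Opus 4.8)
The plan is to exploit that relaxing $M$ at the circuit-hyperplane $X$ changes the basis family only by adding $X$ itself: $\mcB(M')=\mcB(M)\cup\{X\}$. Given bases $B_1,B_2$ of $M'$, I want an exchange-ordering between them. If neither is $X$, then both are bases of $M$, and any exchange-ordering for them in $M$ serves in $M'$ since $\mcB(M)\subseteq\mcB(M')$; if both equal $X$, the identity works. Swapping $B_1$ and $B_2$ via part~(2) of Proposition~\ref{prop:kbobasics} if necessary, the one case left is $B_1\neq X$ and $B_2=X$, so I must construct a bijection $\sigma\from B_1\to X$.

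First I would record the standard facts about a circuit-hyperplane. Writing $r=r(M)$: the set $X$ has $r$ elements and rank $r-1$, every proper subset of $X$ is independent, and $\cl_M(X-\{z\})=X$ for each $z\in X$ (it is a rank-$(r-1)$ flat contained in the rank-$(r-1)$ flat $X$); hence $(X-\{z\})\cup\{w\}$ is a basis of $M$ for every $z\in X$ and $w\in E(M)-X$. Put $I=B_1\cap X$; as $B_1\neq X$ we get $I\subsetneq B_1$, so $|I|\le r-1$ and $I$ is independent. Next I would observe that \emph{any} bijection $\sigma\from B_1\to X$ fixing $I$ pointwise already satisfies the conditions ``$(B_2-\sigma(x))\cup x\in\mcB(M')$'' for all $x$ (this set is $X$ when $x\in I$, and it is $(X-\{\sigma(x)\})\cup\{x\}$ with $x\notin X$, hence a basis of $M$, otherwise) and also ``$(B_1-x)\cup\sigma(x)\in\mcB(M')$'' for $x\in I$ (it is $B_1$). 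So the problem reduces to finding a bijection $\tau\from B_1-I\to X-I$ with $(B_1-\{x\})\cup\{\tau(x)\}\in\mcB(M')$ for every $x\in B_1-I$; then $\sigma$ is $\tau$ together with the identity on $I$.

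The core idea is to bootstrap from base-orderability of $M$ even though $X\notin\mcB(M)$. I would find a single pair $f\in B_1-X$, $g\in X-I$ with $(B_1-\{f\})\cup\{g\}\in\mcB(M)$; then $(X-\{g\})\cup\{f\}$ is also a basis of $M$ by the fact above, and applying base-orderability of $M$ to $B_1$ and $(X-\{g\})\cup\{f\}$ gives an exchange-ordering $\rho$, which fixes the common elements $I\cup\{f\}$ by Proposition~\ref{prop:kbobasics}. Redirecting $f$ to $g$ and keeping $\rho$ on $B_1-(I\cup\{f\})$ yields $\tau$: for $x\neq f$ the set $(B_1-\{x\})\cup\{\tau(x)\}=(B_1-\{x\})\cup\{\rho(x)\}$ is a basis of $M$, and for $x=f$ it is the selected basis $(B_1-\{f\})\cup\{g\}$; both are bases of $M'$.

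The step I expect to be the main obstacle is producing the pair $(f,g)$, i.e.\ showing that $X$ ``looks like a basis of $M$'' from $B_1$'s viewpoint. I would argue by contradiction: if $C_M(g,B_1)\subseteq X$ for every $g\in X-I$, then $C_M(g,B_1)-\{g\}\subseteq B_1\cap X=I$, so $g\in\cl_M(I)$, whence $X\subseteq\cl_M(I)$ and $r-1=r_M(X)\le|I|$, contradicting $|I|\le r-1$ unless $|I|=r-1$. The leftover case $|I|=r-1$ is degenerate---$B_1$ and $X$ then differ by a single swap, so $(B_1-\{f\})\cup\{g\}=X\in\mcB(M')$ for the unique elements $f\in B_1-X$ and $g\in X-I$, and $\tau(f)=g$ works at once. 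Otherwise there is $g\in X-I$ with $C_M(g,B_1)\not\subseteq X$, and since $C_M(g,B_1)-\{g\}\subseteq B_1$ we may pick $f\in(C_M(g,B_1)-\{g\})-X\subseteq B_1-X$; as $f\in C_M(g,B_1)$, the set $(B_1-\{f\})\cup\{g\}$ is a basis of $M$, which is exactly what the construction needs.
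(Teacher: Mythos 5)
Your argument is correct, but it reaches the forward exchanges by a genuinely different route than the paper. You reduce to $B_2=X$, observe (as the paper does) that the reverse exchanges $(X-\sigma(x))\cup x$ are automatic because $X-\{z\}$ spans the hyperplane $X$ in $M$ (equivalently, is an independent hyperplane of $M'$), and then construct the forward exchanges by hand: you locate a pair $f\in B_1-X$, $g\in X-B_1$ with $f\in C_M(g,B_1)$, pass to the genuine $M$-basis $(X-\{g\})\cup\{f\}$, invoke base-orderability of $M$ to get an exchange-ordering $\rho$ fixing $(B_1\cap X)\cup\{f\}$, and redirect $f$ to $g$. The paper instead gets the forward exchanges for free from Brualdi's bijective exchange property applied \emph{in $M'$} to the bases $B_1$ and $X$ (arranging the bijection to fix $B_1\cap X$), which collapses the whole case $B_2=X$ to two lines and, incidentally, shows that base-orderability of $M$ is only needed when neither basis is $X$. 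Your version costs more work --- the existence argument for $(f,g)$, the degenerate case $|B_1\cap X|=r-1$, and the bookkeeping for $\tau$ --- but all of these steps are carried out correctly, and the bootstrapping idea (trading $X$ for a nearby basis of $M$ and then repairing one value of the bijection) is a sound substitute for the bijective exchange property.
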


\begin{proof}
  Let $B_1$ and $B_2$ be bases of $M'$.  It suffices to consider the
  case where $B_2 = X$.  By the bijective exchange property, there is
  a bijection $\sigma \from B_1 \to X$ such that for all $y \in B_1$,
  the set $(B_1 - \sg{y}) \cup \sg{\sigma(y)}$ is also a
  basis. Clearly $\sigma$ fixes any element of $B_1 \cap X$.  Let $y
  \in B_1$.  To show that $\sigma$ is an exchange-ordering, we must
  show that $(X - \sg{\sigma(y)}) \cup \sg{y}$ is a basis of $M'$.
  This holds because $X-\sg{\sigma(y)}$ is an independent hyperplane
  of $M'$.
\end{proof}

\section{An infinite family of excluded minors for gammoids and for 
  $\mathcal{BO}$}\label{sec:em_gammoids}

Ingleton~\cite{ingleton_nonbo} stated (without giving his proof) that
if $\Delta$ is a critical graph and (in the notation of Definition
\ref{def:ingleton_critical}) either $|X|$ or $|Y|$ is two, then
$M(\Delta)$ is an excluded minor for $\mathcal{BO}$.  Since $\Delta$
has neither a source nor a sink, Ingleton's hypothesis implies that
each of $X$ and $Y$ can be partitioned into two sets so that all edges
between a given block of $X$ and one of $Y$ are oriented the same way.
In this section we extend Ingleton's result to the case where there
are such partitions of $X$ and $Y$, even if $\min\{|X|, |Y|\}>2$, as
in Figure \ref{fig:gammoid_eminor}.  We show more: all single-element
contractions of such matroids $M(\Delta)$ are transversal, and all
single-element deletions are cotransversal.  Besides verifying
infinitely many more cases of Conjecture \ref{conj:noobs}, this shows
that these matroids are also excluded minors for the class of
gammoids, and for $\mathcal{SBO}$ and $k$-$\mathcal{BO}$.  Such
critical graphs $\Delta$ look like generalizations of the graph in
Figure \ref{fig:Delta3}, so we may view the matroids $M(\Delta)$ as
generalizations of $M(K_4)$.

In this section, unlike Sections \ref{sec:bedigraph} and
\ref{sec:ingletons_conjecture}, the sets $A$ and $B$ are not bases;
rather they are two sets in a $6$-tuple of sets.  Specifically, for an
integer $r\geq 3$, let $\alpha = (A, B, C, D, E, F)$ be a $6$-tuple of
disjoint nonempty sets with $$r = |A\cup B\cup C| = |D\cup E\cup
F|\qquad \text{and} \qquad r+1=|A\cup B\cup D\cup E|.$$ Let
$\Delta_\alpha$ be the directed bipartite graph with bipartition
$\{A\cup B\cup C, D\cup E\cup F\}$ having the following edges:
\begin{enumerate}
\item $(a,d)$ for all $a\in A$ and $d\in D$,
\item $(e,a)$ for all $a\in A$ and $e\in E$,
\item $(d,b)$ for all $b\in B$ and $d\in D$, and 
\item $(b,e)$ for all $b\in B$ and $e\in E$. 
\end{enumerate}
Thus, $\Delta_\alpha$ is a critical graph with no obstructions.  From
Proposition~\ref{prop:noobs}, in the associated rank-$r$ matroid
$M(\Delta_\alpha)$, which we shorten to $M_\alpha$, the proper
nonempty cyclic flats are $C\cup B\cup E$, \, $C\cup A\cup D$, \,
$F\cup E\cup A$, \, and $F\cup D\cup B$, and their ranks are given by
\begin{equation}\label{eq:malphaZ}
  \begin{split}
    r(C\cup B\cup E) & = |C| + |B|, \\
    r(C\cup A\cup D) & = |C| + |A|, \\
    r(F\cup E\cup A) & = |F| + |E|,  \text{ and }  \\
    r(F\cup D\cup B) & = |F| + |D|.
  \end{split}
\end{equation}
Figure~\ref{fig:gammoid_eminor} gives an example.

\begin{figure}[h]
  \centering 
\begin{tikzpicture}[scale=1]
  \filldraw (0,0) circle (3pt);
  \filldraw (1,0) circle (3pt);
  \filldraw (2,0) circle (3pt);
  \filldraw (3,0) circle (3pt);
  \filldraw (4,0) circle (3pt);
  \filldraw (5,0) circle (3pt);
  \filldraw (6,0) circle (3pt);
  \filldraw (7,0) circle (3pt);

  \filldraw (0,1.75) circle (3pt);
  \filldraw (1,1.75) circle (3pt);
  \filldraw (2,1.75) circle (3pt);
  \filldraw (3,1.75) circle (3pt);
  \filldraw (4,1.75) circle (3pt);
  \filldraw (5,1.75) circle (3pt);
  \filldraw (6,1.75) circle (3pt);
  \filldraw (7,1.75) circle (3pt);

  \draw (6,0) ellipse (1.35cm and 0.35cm); 
  \draw (2.5,0) ellipse (1.85cm and 0.35cm);
  \draw (0,0) ellipse (0.35cm and 0.35cm);
  \draw (5.5,1.75) ellipse (1.85cm and 0.35cm); 
  \draw (2.5,1.75) ellipse (0.85cm and 0.35cm);
  \draw (0.5,1.75) ellipse (0.85cm and 0.35cm);

 \draw[middlearrow_var](0.12,0.4) -- (0.4,1.35);
 \draw[middlearrow_var](0.7,1.35) -- (2.3,0.4);
 \draw[middlearrow_var](2.5,0.4) -- (2.5,1.35);
 \draw[middlearrow_var](2.3,1.35) -- (0.3,0.35);

  \node at (0,-0.7) {$A$};
  \node at (2.5,-0.7) {$B$};
  \node at (6,-0.7) {$C$};
  \node at (0.5,2.45) {$D$};
  \node at (2.5,2.45) {$E$};
  \node at (5.5,2.45) {$F$};
   
\end{tikzpicture}
  \caption[A basis-exchange digraph of an excluded minor for
  gammoids.]
  {An example, with $r=8$, of the digraph $\Delta_\alpha$ described
    above.  An arrow from block $U$ to block $V$ means that there is a
    directed edge $(u, v)$ for every $u \in U$ and $v \in V$.}
  \label{fig:gammoid_eminor}
\end{figure}
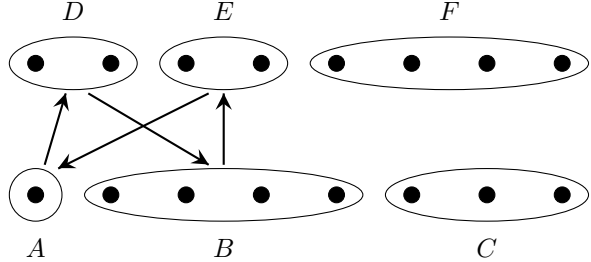

\begin{theorem}\label{thm:k4like}
  The matroid $M_\alpha$ defined above has the following properties:
  \begin{enumerate}
  \item it is not base-orderable,
  \item each of its single-element contractions is transversal,
  \item each of its single-element deletions is cotransversal, and
  \item it is an excluded minor for the following classes of matroids:
    gammoids, $\mathcal{BO}$, $\mathcal{SBO}$, and $k$-$\mathcal{BO}$
    for any $k\geq 1$.
  \end{enumerate}
\end{theorem}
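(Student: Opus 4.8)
The plan is to establish (2) and then read off (1), (3), and (4) from it. For (1): by the construction of $M(\Delta)$ for an obstruction‑free critical graph, $A\cup B\cup C$ and $D\cup E\cup F$ are disjoint bases of $M_\alpha$ and $\bed^{M_\alpha}_{A\cup B\cup C,\,D\cup E\cup F}=\Delta_\alpha$; since $\Delta_\alpha$ restricted to $(A\cup B)\cup(D\cup E)$ is an orientation of $K_{s,t}$ with $s=|A\cup B|=|F|+1\ge 2$, $t=|D\cup E|=|C|+1\ge 2$, and $s+t=r+1$, Lemma~\ref{lem:nbo} shows these bases have no exchange‑ordering, so $M_\alpha\notin\mathcal{BO}$. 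For (3): reversing every edge of $\Delta_\alpha$ yields $\Delta_{\alpha'}$ with $\alpha'=(D,E,F,A,B,C)$, again of the required shape, and $M(\Delta')=M(\Delta)^{*}$ (equivalently Proposition~\ref{prop:Delta_reverse}) gives $M_{\alpha'}=M_\alpha^{*}$; hence $(M_\alpha\delete z)^{*}=M_\alpha^{*}\contract z=M_{\alpha'}\contract z$, so (3) for all admissible $\alpha$ is exactly the dual of (2) for all admissible $\alpha$.

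Granting (1)--(3), here is (4). The matroid $M_\alpha$ has rank $r$ on $2r$ elements. Transversal matroids are strongly base‑orderable and are gammoids, and each of $k$-$\mathcal{BO}$, $\mathcal{SBO}$, $\mathcal{BO}$, and the class of gammoids is minor‑closed and dual‑closed, so cotransversal matroids lie in all four. Thus by (2)--(3) every single‑element contraction or deletion of $M_\alpha$ lies in each of these classes, whereas $M_\alpha$ lies in none of them since by (1) it is not base‑orderable and gammoids $\subseteq\mathcal{SBO}\subseteq k$-$\mathcal{BO}\subseteq\mathcal{BO}$. Applying Lemma~\ref{lem:kbodorc} with the single‑element contractions shows $M_\alpha$ is an excluded minor for $k$-$\mathcal{BO}$ for every $k\ge 1$ and for $\mathcal{SBO}$, hence also for $\mathcal{BO}=1$-$\mathcal{BO}$; and since the class of gammoids is minor‑closed, $M_\alpha$ outside it but every single‑element minor inside it makes $M_\alpha$ an excluded minor for gammoids as well.

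Now the heart, (2). By Proposition~\ref{prop:noobs}, $\mcZ(M_\alpha)=\{\varnothing,Z_1,Z_2,Z_3,Z_4,E(M_\alpha)\}$ with $Z_1=C\cup B\cup E$, $Z_2=C\cup A\cup D$, $Z_3=F\cup E\cup A$, $Z_4=F\cup D\cup B$ and ranks as in (\ref{eq:malphaZ}); in this lattice the four $Z_i$ form an antichain with every pairwise meet equal to $\varnothing$ and every pairwise join equal to $E(M_\alpha)$. The numerical hypotheses on $\alpha$ force $|A|+|B|+|C|=r=|D|+|E|+|F|$ and $|A|+|B|+|D|+|E|=r+1$, hence the identity $|C|+|F|=r-1$ (so $|A|+|B|=|F|+1$ and $|D|+|E|=|C|+1$), which is what makes every inequality below come out right. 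Fix $z$; using the symmetry of the construction under interchanging the pair $(A,D)$ with $(B,E)$, which leaves $\Delta_\alpha$, hence $M_\alpha$, unchanged, I may assume $z\in A$, $z\in C$, $z\in D$, or $z\in F$. In each case I would first determine $\mcZ(M_\alpha\contract z)$ using Lemma~\ref{lem:cfminor}: for the two indices $i$ with $z\in Z_i$ the cyclic flat $Z_i-z$ persists with rank lowered by $1$; for the two indices with $z\notin Z_i$ one checks that $Z_i\cup z$ is a flat of $M_\alpha$ on which $z$ is a coloop of $M_\alpha\restrictto(Z_i\cup z)$, so $Z_i$ persists with unchanged rank; together with $\varnothing$ and $E(M_\alpha)-z$ this is again a six‑element lattice of the same shape. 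Then I would invoke Corollary~\ref{cor:micond} with $\mcG=\mcZ(M_\alpha\contract z)$: the only antichains $\mcF$ of $\mcG$ with $|\mcF|\ge 3$ are the five sub‑antichains of the four middle cyclic flats, and for each of these $r(\cap\mcF)=0$, while after substituting the ranks and simplifying with $|C|+|F|=r-1$ the alternating sum on the right of (\ref{eq:micond}) reduces to one of $0$, $|A|,\dots,|F|$, or $|A|-1,\dots,|F|-1$, all nonnegative. Hence $M_\alpha\contract z$ is transversal.

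The hard part will be the bookkeeping packed into the last two steps: verifying that $Z_i\cup z$ really is a flat of $M_\alpha$ when $z\notin Z_i$ (so that $Z_i$ genuinely survives as a cyclic flat of the contraction), and that every pairwise and triple union of the four middle cyclic flats has rank $r-1$ in $M_\alpha\contract z$. Both amount to repeated applications of the rank formula (\ref{eq:cfrank}), and the point is that every such computation collapses thanks to $|C|+|F|=r-1$ together with $|A|+|B|=|F|+1$ and $|D|+|E|=|C|+1$; setting up these reductions cleanly so that the four‑case analysis stays short is where the care is needed.
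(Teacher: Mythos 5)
Your overall architecture matches the paper's: part (1) via Lemma~\ref{lem:nbo} and the critical-graph structure, part (2) via the Mason--Ingleton condition through Corollary~\ref{cor:micond} with the key identity $|C|+|F|=r-1$ driving the alternating-sum estimates, part (3) by duality, and part (4) as a formal consequence. Your deduction of (4) is fine (and slightly more explicit than the paper's one-line remark), as is the duality step for (3).

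There is, however, one concrete step in your plan for (2) that fails: the claim that when $z\notin Z_i$ the set $Z_i\cup z$ is a flat of $M_\alpha$, so that $Z_i$ survives as a cyclic flat of $M_\alpha\contract z$ with unchanged rank. This is false whenever the block of $\alpha$ containing $z$ is a singleton. For instance, take $|A|=1$ and $z\in A$, and $Z_1=C\cup B\cup E$. Then $r_{M_\alpha}(Z_1\cup z)=|B|+|C|+1=r$, so $Z_1\cup z$ is spanning and $\cl_{M_\alpha}(Z_1\cup z)=E(M_\alpha)\neq Z_1\cup z$; consequently $Z_1$ spans $M_\alpha\contract z$ and is not a cyclic flat of the contraction at all. (The smallest instance is $M(K_4)$ itself: contracting an element kills two of the four $3$-point lines as cyclic flats, not zero.) So $\mcZ(M_\alpha\contract z)$ is not always ``a six-element lattice of the same shape,'' and the verification you flag as the hard part cannot be carried out as stated.

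The repair is cheap and is exactly the paper's move: Corollary~\ref{cor:micond} only requires the inequality to hold for antichains drawn from some $\mcG\supseteq\mcZ(M_\alpha\contract z)$, and Lemma~\ref{lem:cfminor} guarantees that every cyclic flat of $M_\alpha\contract z$ is of the form $Z_i-z$ (or $\varnothing$ or $E(M_\alpha)-z$). So you should simply run your alternating-sum checks over all antichains of $\{Z_1-z,\dots,Z_4-z\}$ without ever deciding which of these are genuinely cyclic flats; your rank bookkeeping (rank drops by $1$ exactly for the two $Z_i$ containing $z$, all pairwise and larger unions drop from $r$ to $r-1$) is correct and suffices. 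With that adjustment your argument becomes the paper's proof.
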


\begin{proof}
  Note that it suffices to prove the first three assertions since they
  imply the last.  Also, assertion (1) follows from our work in
  Section \ref{sec:ingletons_conjecture}.

  We now prove assertion (2).  For an element $x \in E(M_\alpha)$ and
  antichain $\mcF$ of $\mcZ(M_\alpha)$ with $|\mcF| \ge 3$, let
  $\mcF_x = \{F-x : F \in \mcF\}$.  Some sets in $\mcF_x$ might not be
  cyclic flats of $M_\alpha\contract x$, but by
  Lemma~\ref{lem:cfminor}, any antichain of at least three cyclic
  flats of $M_\alpha\contract x$ is equal to some $\mcF_x$.  To show
  that $M_\alpha \contract x$ is transversal, by
  Corollary~\ref{cor:micond} it suffices to check inequality
  (\ref{eq:micond}) for all sets $\mcF_x$; to do this efficiently, we
  simplify inequality (\ref{eq:micond}) for each antichain $\mcF$ of
  $\mcZ(M_\alpha)$, and compare the result to its counterpart for
  $\mcF_x$ in $M_\alpha \contract x$.

  First note that the union of any two proper, non-empty cyclic flats
  of $M_\alpha$ has rank $r$, and that the intersection of any three
  is empty.  Thus, $\cap\mcF=\varnothing$, so
  $\cap\mcF_x=\varnothing$.

  We first let $\mcF$ consist of all four proper, nonempty cyclic
  flats of $M_\alpha$.  From the ranks given in
  equations~(\ref{eq:malphaZ}), the alternating sum in inequality
  (\ref{eq:micond}) simplifies to
  \begin{equation}\label{eq:malpha_misimp1}
    2|C|+|A|+|B|+2|F|+|D|+|E|-3r,
  \end{equation}
  which is $|C|+|F|-r = -1$.  In $M_\alpha \contract x$, the term
  $-3r$ is replaced by $-3(r-1)=-3r+3$, and, since $x$ is in at most
  two cyclic flats of $M_\alpha$, the rank of at most two sets in
  $\mcF_x$ goes down by $1$ compared to their counterparts in $M$; so
  the counterpart, in $M_\alpha \contract x$, of sum
  (\ref{eq:malpha_misimp1}) is nonnegative, as needed.  By the
  symmetry between the proper, nonempty cyclic flats of $M_\alpha$, to
  check triples, it suffices to consider the antichain
  $$\mcF=\{C\cup B\cup E, \,\, C\cup A\cup D, \,\,F\cup E\cup
  A\}.$$ The alternating sum in inequality (\ref{eq:micond}) for
  $\mcF$ in $M_\alpha$ simplifies to
  \begin{equation}\label{eq:malpha_misimp2}
    2|C|+|A|+|B|+|F|+|E|-2r,
  \end{equation}
  that is, $|C|+|F|+|E|-r$, or $|C|-|D|$.  Since $|C|+1 = |D|+|E|$,
  sum (\ref{eq:malpha_misimp2}) equals $|E|-1$, which is nonnegative.
  In $M_\alpha \contract x$, the term $-2r$ is replaced by
  $-2(r-1)=-2r+2$, and the rank of at most two sets in $\mcF_x$ goes
  down by $1$ compared to their counterparts in $M$; so the
  counterpart of sum (\ref{eq:malpha_misimp2}) in $M_\alpha \contract
  x$ is nonnegative, as needed.  Thus, statement (2) holds.

  Assertion (3) follows by applying assertion (2) to the dual, which
  is $M(\Delta'_\alpha)$ where $\Delta'_\alpha$ reverses the
  orientation of each edge of $\Delta$.
\end{proof}

In contrast, single-element deletions of an $M_\alpha$ need not be
transversal.  For example, if
$$\alpha = (\{a_1, a_2\}, \{b_1\}, \{c_1, c_2\},
\{d_1, d_2\}, \{e_1\}, \{f_1, f_2\}),$$ then $M_\alpha \delete a_1$ is
not transversal.

Note that for a given integer $r\ge 3$, this construction yields at
least as many distinct excluded minors $M_\alpha$ as integer
partitions of $r+1$ with four parts, for which $\binom{r}{3}/4!$ is a
crude lower bound.

\section{An infinite family of excluded minors for
  $\mathcal{SBO}$}\label{sec:em_sbo}

Ingleton, in \cite{ingleton_nonbo}, was the first to exhibit a matroid
that is in $\mathcal{BO}$ but not in $\mathcal{SBO}$. Here we
generalize his construction: for a fixed integer $k \ge 2$, we
construct a family of matroids that are in $(k-1)$-$\mathcal{BO}$ but
are excluded minors for $k$-$\mathcal{BO}$ and $\mathcal{SBO}$.  When
$k = 2$, we recover Ingleton's example.  Taken together, i.e., as $k$
ranges over all integers exceeding one, these matroids form an
infinite antichain of base-orderable matroids that are excluded minors
for $\mathcal{SBO}$.

Let $k \ge 2$ be an integer and let $\beta = (A, B, C, D, E, F)$ be a
$6$-tuple of disjoint nonempty sets with $k = |C| = |F| = |A\cup B| =
|D\cup E|$.  We will define a rank-$2k$ matroid $M_\beta$ on the union
of these sets, which we (prematurely) denote $E(M_\beta)$, in terms of
cyclic flats and their ranks.  Define a function $r$ on seven subsets
of $E(M_\beta)$ as follows:
\begin{equation}\label{eq:mbetaZ}
  \begin{split}
    r(E(M_\beta)) & = 2k, \\
    r(A\cup B\cup D\cup E) & = 2k - 1,\\
    r(C\cup B\cup E) & = k + |B|, \\
    r(C\cup A\cup D) & = k + |A|, \\
    r(F\cup E\cup A) & = k + |E|, \\
    r(F\cup D\cup B) & = k + |D|, \; \text{and}  \\
    r(\varnothing) & = 0.
  \end{split}
\end{equation}
Let $\mcZ$ consist of the sets on which $r$ has been defined.

\begin{proposition}
  Let $r$, $\beta$, and $\mathcal{Z}$ be as above.  The function $r$
  can be extended to all subsets of $E(M_\beta)$ to be the rank
  function of a matroid $M_\beta$ with $\mathcal{Z}(M_\beta) =
  \mathcal{Z}$.
\end{proposition}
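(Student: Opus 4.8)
The plan is to apply Theorem~\ref{thm:cyclic_flats}: I will verify that $\mathcal{Z}$, with the rank values assigned in~(\ref{eq:mbetaZ}), satisfies conditions (Z0)--(Z3), which yields a matroid $M_\beta$ on $E(M_\beta)$ with $\mathcal{Z}(M_\beta)=\mathcal{Z}$ and $r_{M_\beta}$ agreeing with $r$ on $\mathcal{Z}$; the extension of $r$ to all of $2^{E(M_\beta)}$ demanded by the statement is then the one recorded in equation~(\ref{eq:cfrank}). For brevity I would write $Z_1=C\cup B\cup E$, $Z_2=C\cup A\cup D$, $Z_3=F\cup E\cup A$, $Z_4=F\cup D\cup B$, and $W=A\cup B\cup D\cup E$, and I would use freely the hypotheses $|C|=|F|=k$, $|A|+|B|=|D|+|E|=k$, and the nonemptiness of all six blocks; the last two facts give $1\le|A|,|B|,|D|,|E|\le k-1$.

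First I would pin down the poset. Reading off the block decompositions, the five sets $Z_1,Z_2,Z_3,Z_4,W$ are pairwise incomparable, each properly contains $\varnothing$ and is properly contained in $E(M_\beta)$, and the intersection of any two of them --- for instance $Z_1\cap Z_2=C$, $Z_1\cap Z_3=E$, $Z_1\cap Z_4=B$, $Z_3\cap Z_4=F$, $W\cap Z_1=B\cup E$, $W\cap Z_2=A\cup D$ --- is not a member of $\mathcal{Z}$. Hence $\mathcal{Z}$ is a height-two lattice with atoms $Z_1,\dots,Z_4,W$: the meet of any two distinct proper nonempty members is $\varnothing$ and their join is $E(M_\beta)$. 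This establishes (Z0); (Z1) is immediate. For (Z2) one need only run through the comparable pairs ($\varnothing$ below each of $Z_1,\dots,Z_4,W$, each of those below $E(M_\beta)$, and $\varnothing\subsetneq E(M_\beta)$), checking in each that the rank gap is positive and strictly less than the cardinality gap; e.g. $r(Z_1)=k+|B|$ lies strictly between $0$ and $|Z_1|=k+|B|+|E|$, and $r(E(M_\beta))-r(Z_1)=|A|$ is positive and less than $|E(M_\beta)-Z_1|=|A|+|D|+|F|$.

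The substance of the proof is (Z3), which I would check for the ten incomparable pairs: the six pairs $\{Z_i,Z_j\}$ with $i\ne j$ and the four pairs $\{W,Z_i\}$. In each of these the meet is $\varnothing$ and the join is $E(M_\beta)$, so the required inequality collapses to $r(X)+r(Y)\ge 2k+|X\cap Y|$. Substituting the ranks from~(\ref{eq:mbetaZ}): for $\{Z_1,Z_2\}$ and $\{Z_3,Z_4\}$ this is the equality $3k=3k$; for each of the other four pairs among the $Z_i$, such as $\{Z_1,Z_3\}$, it reduces to an inequality of the form $(k+|B|)+(k+|E|)\ge 2k+|E|$, i.e. $|B|\ge 0$; and for $\{W,Z_i\}$ cancellation leaves exactly one of $|E|\le k-1$, $|D|\le k-1$, $|A|\le k-1$, $|B|\le k-1$ (according to $i=1,2,3,4$), each of which is one of the bounds noted above. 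Under the relabeling $A\leftrightarrow B$, $D\leftrightarrow E$ --- which interchanges $Z_1$ with $Z_2$ and $Z_3$ with $Z_4$ and fixes $W$, $\varnothing$, and $E(M_\beta)$, and carries each rank formula to the corresponding one --- several of these checks become redundant. Once (Z0)--(Z3) are in hand, Theorem~\ref{thm:cyclic_flats} produces the desired matroid $M_\beta$.

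I expect the only real obstacle to be clerical: correctly computing the ten pairwise intersections $X\cap Y$ and, crucially, confirming that none of them belongs to $\mathcal{Z}$ --- this is what forces every meet to be $\varnothing$ and thereby trivializes the left-hand side of each (Z3) inequality --- together with keeping track of which of $|A|,|B|,|D|,|E|$ appears in each of the four $\{W,Z_i\}$ inequalities, since those are the only places where the hypothesis that every block is nonempty actually gets used.
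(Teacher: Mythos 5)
Your proof is correct: the five proper nonempty members of $\mathcal{Z}$ are pairwise incomparable, every pairwise intersection ($C$, $F$, $A$, $B$, $D$, $E$, or a union of two of $A,B,D,E$) lies outside $\mathcal{Z}$, so every incomparable meet is $\varnothing$ and every incomparable join is $E(M_\beta)$, and your arithmetic for (Z2) and for the ten instances of (Z3) checks out (in particular the four $\{W,Z_i\}$ cases reduce to $|E|,|D|,|A|,|B|\le k-1$, which is exactly where the nonemptiness of the blocks enters). However, your route differs from the paper's. Rather than verifying (Z0)--(Z3) from scratch, the paper picks $c\in C$ and $f\in F$, observes that $\alpha=(A,B,C-c,D,E,F-f)$ satisfies the hypotheses of Section~\ref{sec:em_gammoids} with $r=2k-1$, and notes that the sets in~(\ref{eq:mbetaZ}) other than $W=A\cup B\cup D\cup E$ are obtained from the cyclic flats of $M_\alpha$ by adjoining $c$ and $f$ and bumping each rank by one; since the axioms for those sets were already certified (via the obstruction-free criterion of Proposition~\ref{prop:noobs}), only the pairs involving $W$ need explicit checking, and $W$ is comparable only to $\varnothing$ and $E(M_\beta)$. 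The paper's argument is shorter because it reuses earlier machinery; yours is longer but entirely self-contained and makes visible exactly which numerical hypotheses on $|A|,|B|,|D|,|E|$ are load-bearing, which the paper leaves as ``routine.'' Either proof establishes the proposition, with the extension of $r$ to all of $2^{E(M_\beta)}$ given by equation~(\ref{eq:cfrank}) as you note.
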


\begin{proof}
  We check the conditions in Theorem~\ref{thm:cyclic_flats}.
  Condition (Z1) holds by construction.  Fix $c\in C$ and $f\in F$,
  and set $\alpha=(A,B,C-c,D,E,F-f)$.  Note that $\alpha$ satisfies
  the assumptions in Section \ref{sec:em_gammoids} with $r=2k-1$, so
  we can let $M_\alpha$ be the matroid defined in that section.  The
  sets and ranks in equation (\ref{eq:mbetaZ}), apart from $A\cup
  B\cup D\cup E$, are obtained from the cyclic flats of $M_\alpha$ by
  adjoining $c$ to the sets that contain $C-c$, and $f$ to the sets
  that contain $F-f$, and increasing the rank of each augmented set by
  $1$.  From this observation and the fact that $A\cup B\cup D\cup E$
  is comparable only to $\varnothing$ and $E(M_\beta)$, it follows
  that condition (Z0) holds, and that conditions (Z2) and (Z3) hold
  for all pairs that do not include $A\cup B\cup D\cup E$.  It is
  routine to check the remaining requirements, namely, that conditions
  (Z2) and (Z3) hold for the pairs that include $A\cup B\cup D\cup E$.
\end{proof}

It follows from equations (\ref{eq:cfrank}) and (\ref{eq:mbetaZ}) that
both $A \cup B \cup C$ and $D \cup E \cup F$ are bases of $M_\beta$.
Their basis-exchange digraph has the form illustrated in
Figure~\ref{fig:sbo_eminor}.

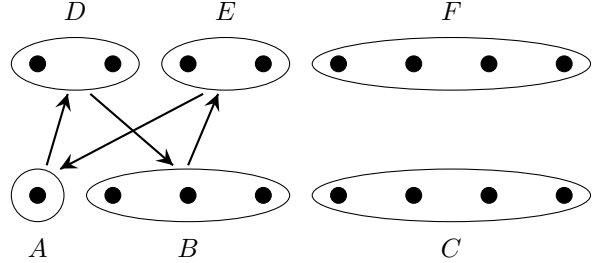
\begin{figure}[h]
\begin{tikzpicture}[scale=1]
  \filldraw (0,0) circle (3pt);
  \filldraw (1,0) circle (3pt);
  \filldraw (2,0) circle (3pt);
  \filldraw (3,0) circle (3pt);
  \filldraw (4,0) circle (3pt);
  \filldraw (5,0) circle (3pt);
  \filldraw (6,0) circle (3pt);
  \filldraw (7,0) circle (3pt);

  \filldraw (0,1.75) circle (3pt);
  \filldraw (1,1.75) circle (3pt);
  \filldraw (2,1.75) circle (3pt);
  \filldraw (3,1.75) circle (3pt);
  \filldraw (4,1.75) circle (3pt);
  \filldraw (5,1.75) circle (3pt);
  \filldraw (6,1.75) circle (3pt);
  \filldraw (7,1.75) circle (3pt);

  \draw (5.5,0) ellipse (1.85cm and 0.35cm); 
  \draw (2,0) ellipse (1.35cm and 0.35cm);
  \draw (0,0) ellipse (0.35cm and 0.35cm);

  \draw (5.5,1.75) ellipse (1.85cm and 0.35cm); 
  \draw (2.5,1.75) ellipse (0.85cm and 0.35cm);
  \draw (0.5,1.75) ellipse (0.85cm and 0.35cm);

 \draw[middlearrow_var](0.12,0.4) -- (0.4,1.35);
 \draw[middlearrow_var](0.7,1.35) -- (1.8,0.4);
 \draw[middlearrow_var](2,0.4) -- (2.4,1.35);
 \draw[middlearrow_var](2.2,1.35) -- (0.3,0.35);

  \node at (0,-0.7) {$A$};
  \node at (2,-0.7) {$B$};
  \node at (5.5,-0.7) {$C$};

  \node at (0.5,2.45) {$D$};
  \node at (2.5,2.45) {$E$};
  \node at (5.5,2.45) {$F$};
   
\end{tikzpicture}
  \caption[A basis-exchange digraph of an excluded minor for
  $\mathcal{SBO}$.]
  {An example, with $k=4$, of a basis-exchange digraph of a matroid
    $M_\beta$.  Arrows between sets are interpreted as in Figure
    \ref{fig:gammoid_eminor}.}
  \label{fig:sbo_eminor}
\end{figure}

To prove the next lemma, we use the same idea as in the proof of
Theorem~\ref{thm:k4like}.

\begin{lemma}\label{lem:sboeminor}
  Every single-element contraction of $M_\beta$ is transversal.
\end{lemma}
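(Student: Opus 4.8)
\medskip

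The plan is to argue as in the proof of Theorem~\ref{thm:k4like}. Fix $x \in E(M_\beta)$; since $M_\beta$ has no coloops, $r(M_\beta \contract x) = 2k-1$. By Lemma~\ref{lem:cfminor}, every antichain of at least three cyclic flats of $M_\beta \contract x$ has the form $\mcF_x := \{F - x : F \in \mcF\}$ for some antichain $\mcF$ of $\mcZ(M_\beta)$, so by Corollary~\ref{cor:micond} it suffices to verify inequality~(\ref{eq:micond}) in $M_\beta \contract x$ for each such $\mcF_x$. First I would record the relevant structure of $\mcZ(M_\beta)$: its proper nonempty members are $G := A \cup B \cup D \cup E$, of rank $2k-1$, and the four sets $Z_1 = C \cup B \cup E$, $Z_2 = C \cup A \cup D$, $Z_3 = F \cup E \cup A$, $Z_4 = F \cup D \cup B$ from~(\ref{eq:mbetaZ}); moreover $G$ is incomparable with every $Z_i$, the union of any two of these five sets already has rank $2k$ (hence so does every larger union), every element of $E(M_\beta)$ lies in exactly two of $Z_1, \dots, Z_4$, each intersection $Z_i \cap Z_j$ is a single block among $A, \dots, F$, and each of $A, B, D, E$ is independent in $M_\beta$. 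Consequently the antichains of $\mcZ(M_\beta)$ of size at least three are exactly the subsets of $\{G, Z_1, Z_2, Z_3, Z_4\}$ of size at least three.

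For each such $\mcF$, I would simplify~(\ref{eq:micond}) in $M_\beta$ using the ranks in~(\ref{eq:mbetaZ}) and the identities $|A| + |B| = |D| + |E| = k$. Since every union appearing on the right-hand side has rank $2k$, that side collapses: it equals $0$ for $\mcF = \{Z_1, Z_2, Z_3, Z_4\}$; $2k - r(Z_l)$ for a triple of $Z_i$'s omitting $Z_l$; $r(Z_i) + r(Z_j) - 2k - 1$ for $\{G, Z_i, Z_j\}$; $2k - 1 - r(Z_l)$ for $\{G, Z_i, Z_j, Z_k\}$; and $-1$ for the full antichain $\{G, Z_1, Z_2, Z_3, Z_4\}$. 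Here $r(Z_l) \le 2k-1$, and $r(\cap \mcF) = 0$ except for the four antichains $\{G, Z_i, Z_j\}$ with $Z_i \cap Z_j \in \{A, B, D, E\}$, where $r(\cap \mcF)$ is the (nonzero) size of that block; using that all six blocks are nonempty, one checks that~(\ref{eq:micond}) holds in $M_\beta$ for every antichain \emph{except} $\{G, Z_1, Z_2, Z_3, Z_4\}$, where the right-hand side falls short of $r(\cap \mcF) = 0$ by exactly $1$.

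It then remains to track the effect of contracting $x$. Set $\delta(Y) = r_{M_\beta}(Y) - r_{M_\beta \contract x}(Y - x)$; then $\delta(Y) \in \{0, 1\}$, with $\delta(Y) = 1$ whenever $Y$ has full rank and, for a flat $F$, $\delta(F) = 1$ precisely when $x \in F$. Hence, on passing from $M_\beta$ to $M_\beta \contract x$, the quantity ``right-hand side of~(\ref{eq:micond}) minus $r(\cap \mcF)$'' changes by $-\sum_{\varnothing \ne \mcF' \subseteq \mcF} (-1)^{|\mcF'|+1} \delta(\cup \mcF') + \delta(\cap \mcF)$. Since every union of at least two members of $\mcF$ has full rank (so contributes $\delta = 1$), while $G, Z_1, \dots, Z_4$ are flats and $x$ lies in exactly two of $Z_1, \dots, Z_4$, a short signed count shows this change is always nonnegative, and is at least $1$ for the antichain $\{G, Z_1, Z_2, Z_3, Z_4\}$. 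Together with the previous paragraph, this gives~(\ref{eq:micond}) for every $\mcF_x$ in $M_\beta \contract x$, so $M_\beta \contract x$ is transversal.

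The delicate point is the full antichain $\{G, Z_1, Z_2, Z_3, Z_4\}$: there~(\ref{eq:micond}) genuinely fails in $M_\beta$ (so $M_\beta$ itself is not transversal), and transversality of the contraction really does hinge on contracting any $x$ producing a net gain of at least one. The antichains $\{G, Z_i, Z_j\}$ whose intersection is a nonempty block are the fiddliest, since there the term $\delta(\cap \mcF)$ also enters the count. Using the symmetry of the combinatorial pattern of $Z_1, \dots, Z_4$ under relabeling of the blocks reduces the explicit verifications to one antichain of each type.
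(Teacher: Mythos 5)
Your proposal is correct and follows essentially the same route as the paper: reduce via Lemma~\ref{lem:cfminor} and Corollary~\ref{cor:micond} to checking inequality~(\ref{eq:micond}) for antichains $\mcF_x$ pulled back from the five proper nonempty cyclic flats, simplify the alternating sums in $M_\beta$ using that all pairwise unions have full rank, and then bound the effect of contraction using the fact that $x$ lies in at most three of the five flats. The only difference is presentational: your signed count $n-1-m+\delta(\cap\mcF)$ packages uniformly what the paper verifies case by case (replacing $-jr$ by $-j(r-1)$ and noting that at most $j$ ranks drop by one), and your computed values ($-1$, $0$, $2k-r(Z_l)$, etc.) agree with the paper's.
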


\begin{proof}
  For an element $x \in E(M_\beta)$ and antichain $\mcF$ of cyclic
  flats of $M_\beta$ with $|\mcF| \ge 3$, let $\mcF_x = \{F-x : F \in
  \mcF\}$.  To prove that $M_\beta \contract x$ is transversal, it
  suffices to verify inequality (\ref{eq:micond}) for all such
  $\mcF_x$ in $M_\beta \contract x$; we do this by comparing that
  inequality to its counterpart for $\mcF$ in $M_\beta$.  Symmetry
  reduces the argument to the seven cases for $\mcF$ treated below.
  In each case, we use the equality $r=2k=|A\cup B\cup D\cup E|$ as
  well as equations~(\ref{eq:mbetaZ}).  We also use the observations
  that the union of any two proper, non-empty cyclic flats of
  $M_\beta$ has rank $r=2k$, and if $|\mcF|>3$, then $\cap \mcF =
  \varnothing$, and so $\cap \mcF_x = \varnothing$.

  Let $\mcF$ consist of all five proper nonempty cyclic flats of
  $M_\beta$.  The alternating sum in inequality~(\ref{eq:micond})
  simplifies to
  \begin{equation}\label{eq:mbeta_misimp1}
    6k+|A|+|B|+|E|+|D|-1-4r,
  \end{equation}
  which equals $-1$.  In $M_\beta \contract x$, the counterpart of
  sum~(\ref{eq:mbeta_misimp1}) is nonnegative, as needed, since
  $-4(r-1)=-4r+4$ replaces $-4r$, and, with $x$ in at most three
  cyclic flats of $M_\beta$, the rank of at most three sets in
  $\mcF_x$ goes down by $1$ compared to their counterparts in $M$.

  Next consider $\mcF=\{C\cup B\cup E, \,\, C\cup A\cup D, \,\, F\cup
  E\cup A, \,\, F\cup D\cup B\}$. The alternating sum in inequality
  (\ref{eq:micond}) simplifies to
  \begin{equation*}
    4k+|A|+|B|+|E|+|D|-3r,
  \end{equation*}
  which equals $0$.  The counterpart in $M_\beta \contract x$ is also
  nonnegative since $-3(r-1)$ replaces $3r$ and, with $x$ in two
  cyclic flats of $M_\beta$, the rank of two sets in $\mcF_x$ goes
  down by $1$ compared to their counterparts in $M$.

  If $|\mcF| = 4$ and $A\cup B\cup D\cup E \in \mcF$, then by symmetry
  it suffices to consider
  $$\mcF=\{A\cup B\cup D\cup E, \,\, C\cup B\cup E, \,\, C\cup A\cup
  D, \,\, F\cup E\cup A\}.$$ The alternating sum in inequality
  (\ref{eq:micond}) for $\mcF$ in $M_\beta$ simplifies to
  $5k-1+|B|+|A|+|E|-3r$.  This equals $|E|-1$, which is nonnegative.
  This case is completed as above by noting that $x$ is in at most
  three cyclic flats of $M_\beta$

  If $|\mcF| = 3$ but $A\cup B\cup D\cup E \not\in \mcF$, then by
  symmetry it suffices to consider
  $$\mcF=\{C\cup B\cup E, \,\, C\cup A\cup D, \,\,F\cup E\cup A\}.$$
  In this case, $\cap\mcF = \varnothing$, and the alternating sum in
  inequality (\ref{eq:micond}) for $\mcF$ in $M_\beta$ simplifies to
  \begin{equation}\label{eq:mbeta_misimp4}
    3k+|A|+|B|+|E|-2r,
  \end{equation}
  which equals $|E|$. In $M_\beta \contract x$, the term $-2(r-1)$
  replaces $-2r$, and the rank of at most two sets in $\mcF_x$ goes
  down by $1$ compared to their counterparts in $M$; so the
  counterpart of sum~(\ref{eq:mbeta_misimp4}) in $M_\beta \contract x$
  is positive.

  When $|\mcF| = 3$ and $A\cup B\cup D\cup E \in \mcF$, then by
  symmetry, it suffices to assume that $C\cup A\cup D \in \mcF$ and to
  examine the three remaining cases.

  First let $\mcF=\{A\cup B\cup D\cup E, \,\, C\cup A\cup D, \,\,
  C\cup B\cup E\}$. In this case, $\cap\mcF = \varnothing$, and the
  alternating sum in inequality (\ref{eq:micond}) for $\mcF$ in
  $M_\beta$ simplifies to $2r-1+|A|+|B|-2r$, which equals $k-1$.  This
  case follows as above by noting that $x$ is in at most two cyclic
  flats of $\mcF_x$.

  Next, let $\mcF=\{A\cup B\cup D\cup E, \,\, C\cup A\cup D, \,\,
  F\cup E\cup A\}$. In this case, $\cap\mcF = A$, which is
  independent, and the alternating sum in inequality (\ref{eq:micond})
  for $\mcF$ in $M_\beta$ simplifies to $$4k+|A|+|E|-1-2r,$$ that is,
  $|A|+|E|-1$.  In $M_\beta \contract x$, the term $-2(r-1)$ replaces
  $-2r$.  If $x \not\in A$, then $x$ is in at most two cyclic flats of
  $\mcF$, so the rank of at most two sets in $\mcF_x$ goes down by
  $1$; also, $r_{M_\beta \contract x}(\cap\mcF_x) = r_{M_\beta
    \contract x}(A) \le |A|$.  If $x \in A$, then the rank of all
  three sets in $\mcF_x$ goes down by $1$; also, $r_{M_\beta \contract
    x}(\cap\mcF_x) = r_{M_\beta \contract x}(A-\sg{x}) = |A|-1$.
  Either way, the counterpart of inequality (\ref{eq:micond}) for
  $\mcF_x$ in $M_\beta \contract x$ holds because $|E|-1 \ge 0$.

  Finally, the case that $\mcF=\{A\cup B\cup D\cup E, \,\, C\cup A\cup
  D, \,\, F\cup D\cup B\}$ is similar to the previous case, with $D$
  playing the role of $A$.
\end{proof}

\begin{theorem}\label{thm:sboeminor}
  For $k \ge 2$, the matroid $M_\beta$ defined above has the following
  properties:
  \begin{enumerate}
  \item $M_\beta$ is neither $k$-base-orderable nor strongly
    base-orderable,
  \item every single-element contraction of $M_\beta$ is transversal,
  \item $M_\beta$ is an excluded minor for $\mathcal{SBO}$ and
    $k$-$\mathcal{BO}$,
  \item $M_\beta$ is $(k-1)$-base-orderable, and
  \item if $|A| = |B| = |D| = |E| = k/2$, then $M_\beta$ has exactly
    two pairs of bases that have no $k$-exchange-ordering; otherwise,
    there is only one such pair of bases.
  \end{enumerate}
\end{theorem}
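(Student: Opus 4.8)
The plan is to prove assertion (5) by classifying the pairs of bases of $M_\beta$ that have no $k$-exchange-ordering.

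First I would reduce to complementary pairs. If $B_1$ and $B_2$ are bases with a common element $x$, then by assertion (2) the matroid $M_\beta\contract x$ is transversal, hence (by Brualdi--Scrimger) strongly base-orderable, hence $k$-base-orderable; so $B_1-x$ and $B_2-x$ have a $k$-exchange-ordering in $M_\beta\contract x$, and extending it by $\sigma(x)=x$ gives one for $B_1$ and $B_2$ in $M_\beta$, exactly as in the proof of Lemma~\ref{lem:kbodorc}. Thus a pair of bases with no $k$-exchange-ordering consists of disjoint bases, and since $|E(M_\beta)|=4k=2\,r(M_\beta)$ these are complementary; so it is enough to classify complementary pairs $\{T,\,E(M_\beta)-T\}$ of bases with no $k$-exchange-ordering. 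Here I will use that, by \eqref{eq:cfrank} and \eqref{eq:mbetaZ}, a $2k$-subset $S$ of $E(M_\beta)$ is a basis precisely when $|S\cap Z|\le r(Z)$ for every cyclic flat $Z$; consequently $\{T,\,E(M_\beta)-T\}$ is a pair of bases exactly when $|Z|-r(Z)\le|T\cap Z|\le r(Z)$ for every cyclic flat $Z$, a system of linear constraints on the parameters $|T\cap A|,\dots,|T\cap F|$ that also records which cyclic flats are \emph{tight} for $T$ and for $E(M_\beta)-T$.

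Next I would pin down the exceptional pairs. That $\{A\cup B\cup C,\ D\cup E\cup F\}$ has no $k$-exchange-ordering is assertion (1): its basis-exchange digraph is the one in Figure~\ref{fig:sbo_eminor}, in which $C$ and $F$ are isolated, so (by the matching analysis in the proof of Lemma~\ref{lem:nbo}) every exchange-ordering sends $C$ onto $D\cup E$; taking $X=C$, which is allowed since $|C|=k$, the set $((A\cup B\cup C)-C)\cup\sigma(C)=A\cup B\cup D\cup E$ has rank $2k-1$ and is not a basis. When $|A|=|B|=|D|=|E|=k/2$, the four proper nonempty cyclic flats other than $A\cup B\cup D\cup E$ all have rank $3k/2$, so the involution $\phi$ of $E(M_\beta)$ that interchanges $A\leftrightarrow B$ and $C\leftrightarrow F$ and fixes $D$ and $E$ permutes the (cyclic flat, rank) data of $M_\beta$ and hence, by Theorem~\ref{thm:cyclic_flats}, is an automorphism; it carries the first exceptional pair to $\{A\cup B\cup F,\ C\cup D\cup E\}$, a second, distinct pair with no $k$-exchange-ordering. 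When the four cardinalities are not all $k/2$, the basis constraints show that one of $A\cup B\cup F$ and $C\cup D\cup E$ fails to be a basis, so this second pair does not occur.

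It then remains to show that no other complementary pair fails, and this is where the work lies. Given a complementary pair $\{T,\,E(M_\beta)-T\}$ of bases that is not exceptional, I would take the $(k-1)$-exchange-ordering $\sigma$ supplied by assertion (4) and argue that, after possibly re-choosing the matching of the graph $\bar\Omega$ (associated to this pair as in Lemma~\ref{lem:nbo}) that underlies $\sigma$, the bijection $\sigma$ is actually a $k$-exchange-ordering, i.e.\ that $(T-X)\cup\sigma(X)$ and $(E(M_\beta)-T-\sigma(X))\cup X$ are bases for every $X\subseteq T$ with $|X|=k$. From the tight cyclic flats one computes $\bed^{M_\beta}_{T,E(M_\beta)-T}$ and $\bar\Omega$ (a single-element exchange fails exactly when it over-covers a tight cyclic flat), and then one shows that the only way a $k$-exchange can fail is that $(T-X)\cup\sigma(X)$ contains a cyclic flat of rank $2k-1$; the generic such flat is $A\cup B\cup D\cup E$, whence $X=T\cap(C\cup F)$ with $|T\cap(C\cup F)|=k$, and Hall's Theorem (Theorem~\ref{thm:Hall}) applied to $\bar\Omega$ forces $T$ to be $A\cup B\cup C$ or $A\cup B\cup F$ (up to interchanging $T$ and its complement), i.e.\ an exceptional basis. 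The hard part will be precisely this last step: ruling out over-covering of the rank-$(k+|A|)$, $(k+|B|)$, $(k+|E|)$, $(k+|D|)$ cyclic flats — which needs extra care in the degenerate cases where one of these ranks equals $2k-1$ — and, when $T$ halves $A\cup B\cup D\cup E$ but is not an exceptional basis, exhibiting a matching of $\bar\Omega$ that carries no $k$-subset of $T$ onto a tight cyclic flat, so that the resulting bijection works at level $k$. The reduction to complementary pairs and the automorphism $\phi$ keep the case analysis manageable.
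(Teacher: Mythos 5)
Your reduction to complementary pairs (via assertion (2): contract a common element, get a transversal hence strongly base-orderable matroid, extend the exchange-ordering by fixing that element) is correct and matches the paper. Your identification of the exceptional pairs is also sound: the matching argument in $\bar\Omega$ showing that any exchange-ordering between $A\cup B\cup C$ and $D\cup E\cup F$ must send $C$ onto $D\cup E$, followed by taking $X=C$, is a valid alternative to the paper's direct cyclic-flat argument for assertion (1); and the involution $A\leftrightarrow B$, $C\leftrightarrow F$ is indeed an automorphism when $|A|=|B|=|D|=|E|=k/2$ (it permutes the pairs $(Z,r(Z))$, so Theorem~\ref{thm:cyclic_flats} applies), which transports the failure to the second pair. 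This is essentially the paper's symmetry argument in a cleaner form.

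The genuine gap is the step you yourself flag as ``where the work lies'': showing that \emph{every other} complementary pair of bases admits a $k$-exchange-ordering. You do not carry this out; you only describe a program (compute $\bed_{T,E(M_\beta)-T}$ from the tight cyclic flats, apply Hall's Theorem, rule out over-covering of each of the five proper nonempty cyclic flats, and in degenerate cases exhibit a suitable matching). As stated this program is not a proof --- for instance, a $k$-exchange fails whenever $(T-X)\cup\sigma(X)$ over-covers \emph{any} cyclic flat, not only one of rank $2k-1$, so the case analysis you defer is exactly the content of the claim. The missing idea that makes this step short is the block-homogeneity of $M_\beta$: elements lying in the same set of $\beta$ are interchangeable in every basis. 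Hence if some block of $\beta$ meets both $T$ and its complement, say $x\in T$ and $y\in E(M_\beta)-T$ lie in the same block, then $(T-\sg{x})\cup\sg{y}$ is again a basis, the pair $\bigl((T-\sg{x})\cup\sg{y},\,E(M_\beta)-T\bigr)$ shares the element $y$ and so has a $k$-exchange-ordering by your own step 1, and interchanging $x$ and $y$ back converts it into a $k$-exchange-ordering for the original pair. The only complementary pairs in which no block splits are those in which both bases are unions of blocks, and a short count using the ranks in (\ref{eq:mbetaZ}) shows these are exactly $\{A\cup B\cup C,\,D\cup E\cup F\}$ and (when it consists of bases) $\{A\cup B\cup F,\,C\cup D\cup E\}$. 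This is how the paper closes the argument (inside its proof of assertion (4), which simultaneously yields the classification needed for (5)); treating assertion (4) as a black box and then re-deriving the classification by a digraph analysis both duplicates and substantially complicates the work, and in your write-up that analysis is left incomplete.
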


\begin{proof}
  We first show that $M_\beta$ is not $k$-base-orderable, and so not
  strongly base-orderable.  As noted above, both $A \cup B \cup C$ and
  $D \cup E \cup F$ are bases of $M_\beta$.  Assume for a
  contradiction that $\sigma \from D \cup E \cup F \to A \cup B \cup
  C$ is a $k$-exchange-ordering.  Since $C\cup B$ is a basis of $C\cup
  B\cup E$, we get $\sigma(E) \subseteq C \cup B$.  Likewise, the fact
  that $C\cup A$ is a basis of $C \cup A \cup D$ forces $\sigma(D)
  \subseteq C \cup A$.  However, it cannot be that $\sigma(D \cup E) =
  C$ since $A\cup B\cup D\cup E$ is a dependent set.  So either
  $\sigma(D) \cap A \ne \varnothing$ or $\sigma(E) \cap B \ne
  \varnothing$.  The former cannot occur because $F\cup E$ is a basis
  of $F\cup E\cup A$, but the latter is also impossible because $F\cup
  D$ is a basis of $F\cup D\cup B$.  Therefore, we have reached a
  contradiction.

  Assertion (2) is Lemma~\ref{lem:sboeminor}.  Since transversal
  matroids are strongly base-orderable, Lemma~\ref{lem:kbodorc}
  implies assertion (3).

  Next we prove assertion (4).  Since proper minors of $M_\beta$ are
  in $\mathcal{SBO}$, we need only show that there is a
  $(k-1)$-exchange-ordering between every disjoint pair of bases.

  Note that nothing distinguishes elements that are in the same set in
  $\beta$. We will use the following consequence of that observation:
  for distinct members $x,y$ of the same set in $\beta$, if $B_1$ is a
  basis of $M_\beta$ with $x \in B_1$ and $y \not\in B_1$, then $(B_1
  - \sg{x}) \cup \sg{y}$ is also a basis of $M_\beta$.

  Let $B_1$ and $B_2$ be disjoint bases of $M_\beta$.  First assume
  that some set $X$ in $\beta$ contains some element, say $x$, in
  $B_1$ and some element, say $y$, in $B_2$.  Now $y \not\in B_1$.  By
  the observation in the previous paragraph, $(B_1 - \sg{x}) \cup
  \sg{y}$ is a basis of $M_\beta$.  Since $M_\beta \contract y$ is
  strongly base-orderable, there is a $k$-exchange-ordering $\sigma
  \from (B_1 - \sg{x}) \cup \sg{y} \to B_2$ with respect to $M_\beta$.
  It must be that $\sigma(y) = y$.  Defining $\tau \from B_1 \to B_2$
  by
  \begin{equation*}
    \tau(e) =
    \begin{cases}
      \sigma(e) & \text{if} \,\, e \ne x \\
      y & \text{if} \,\, e = x,\\
    \end{cases}
  \end{equation*}
  gives a $k$-exchange-ordering with respect to $M_\beta$.

  Now assume that no set in $\beta$ has elements in both $B_1$ and
  $B_2$.  Thus, $B_1$ is a union of sets in $\beta$, as is $B_2$.  The
  size constraints on the sets imply that $B_1$ is a union of at least
  two sets, as is $B_2$.  The only union of two sets that is a basis
  is $C \cup F$, but its complement, $A \cup B \cup D \cup E$, is not
  a basis, so $B_1$ is a union of three sets, as is $B_2$.  Since
  bases have $2k$ elements and $|C|=k$, the fact that $C \cup B \cup
  E$, $C \cup A \cup D$, $F \cup E \cup A$, and $F \cup D \cup B$ are
  cyclic flats implies that there are at most two pairs of disjoint
  bases.  Namely, one of $B_1$ and $B_2$ must be either $A\cup B\cup
  C$ or $C\cup D\cup E$.  If $B_1 = A \cup B \cup C$, then $B_2 = D
  \cup E \cup F$, and any bijection $\sigma \from B_1 \to B_2$ with
  $\sigma(A \cup B) = F$ and $\sigma(C) = D \cup E$ is a
  $(k-1)$-exchange-ordering.  If $B_1 = C \cup D \cup E$, then $B_2 =
  A \cup B \cup F$, and any bijection $\tau \from B_1 \to B_2$ with
  $\tau(C)=A \cup B$ and $\tau(D \cup E)=F$ is a
  $(k-1)$-exchange-ordering.  This proves assertion (4).

  As we now show, it is possible for both $A \cup B \cup F$ and $C
  \cup D \cup E$ to be bases of $M_\beta$ only if $|A| = |B| = |D| =
  |E| = k/2$.  If this equality does not hold, then the larger of
  $|A|$ and $|B|$ must be strictly greater than the smaller of $|D|$
  and $|E|$.  By symmetry, we may assume that $|B| \ge |A|$.  If $|B|
  > |D|$, then since $r(F \cup D \cup B) = |F| + |D|$, it follows that
  $F \cup B$ is dependent.  If instead $|B| > |E|$, then we have $|D|
  > |A|$ since $|A| + |B| = |D| + |E|$.  Now since $r(C \cup A \cup D)
  = |C| + |A|$, it follows that $C \cup D$ is dependent.

  Furthermore, if $A\cup B\cup F$ and $C\cup D\cup E$ are indeed both
  bases of $M_\beta$, then the basis-exchange digraphs $\bed_{A\cup
    B\cup C,\, D\cup E\cup F}$ and $\bed_{C\cup D\cup E,\, A\cup B\cup
    F}$ are isomorphic, and, by symmetry, there is no
  $k$-exchange-ordering between $A\cup B\cup F$ and $C\cup D\cup E$.
  This proves assertion (5).
\end{proof}

Note that, for a given $k\geq 2$, the number matroids $M_\beta$, up to
isomorphism, is the number of $4$-cycles $(p,q,r,s)$ of positive
integers (allowing repetitions) with $p+r=k$ and $q+s=k$.  (We get
$p$, $q$, $r$, $s$ from $|A|$, $|D|$, $|B|$, $|E|$ by some cyclic
shift.)  First let $k=2h+1$, so $k$ is odd.  There are $h$ choices for
the smaller of $p$ and $r$, and likewise for $q$ and $s$.  These two
smallest integers must be adjacent in the cycle.  If the two smallest
integers differ, then the cycle is determined by deciding which
follows the other, so there are $h(h-1)$ such cycles.  If the two
smallest integers are equal, then each of the $h$ choices of that
integer yields only one cycle.  Thus, there are $h(h-1)+h = h^2$
matroids $M_\beta$, up to isomorphism, when $k=2h+1$.  Now let $k=2h$,
so $k$ is even.  The analysis above applies if the two smallest
integers are less than $h$, so there are $(h-1)^2$ such cycles.  If
either of the two smallest integers is $h$, then the cycle is
determined by the other smallest integer, so there are $h$ such
cycles.  Thus, there are $(h-1)^2+h$ matroids $M_\beta$, up to
isomorphism, when $k=2h$.

By Proposition \ref{prop:kbobasics}, the dual matroid $M_\beta^{*}$ is
also an excluded minor for $k$-$\mathcal{BO}$ and $\mathcal{SBO}$ that
is $(k-1)$-base-orderable.  We note that $M_\beta^{*}$ may be thought
of as a variation on $M_\beta$ in the following sense.  Given $\beta =
(A, B, C, D, E, F)$, modify the construction of $M_\beta$ by replacing
the circuit-hyperplane $A\cup B\cup D\cup E$ with the
circuit-hyperplane $C\cup F$, giving the matroid $M'_\beta$, say.  Now
let $\beta' = (A, B, C, E, D, F)$.  One can show that $M_\beta^{*} =
M'_\beta$ by using equation~(\ref{eq:cfrank}) and
Proposition~\ref{prop:cfdual}.

We showed that the single-element contractions of $M_\beta$ are
transversal and hence gammoids.  Perhaps the single-element deletions
of $M_\beta$ are also gammoids, but showing that would require a
different type of argument.  To see why, for $k = 5$, let $|A| = |D| =
2$, $|B| = |E| = 3$, and $|C| = |F| = 5$.  Let $c \in C$.  Using the
Mason-Ingleton condition, one can check that neither $M_\beta \delete
c$ nor $(M_\beta \delete c)^*$ is transversal.  Testing whether
$M_\beta \delete c$ is a gammoid therefore would require a different
approach.


\end{document}